\documentclass[11pt]{amsart}

\usepackage{amsmath}
\usepackage{amssymb}
\usepackage{amsthm}
\usepackage{mathrsfs}
\usepackage{graphicx}
\usepackage{xcolor}
\usepackage[colorlinks=true,allcolors=black]{hyperref}

\newtheorem{theorem}{Theorem}[section]
\newtheorem{mainthm}{Theorem}
\newtheorem{lemma}[theorem]{Lemma}
\newtheorem{proposition}[theorem]{Proposition}
\newtheorem{corollary}[theorem]{Corollary}
\theoremstyle{definition}
\newtheorem{definition}[theorem]{Definition}
\newtheorem{remark}[theorem]{Remark}

\newcommand{\R}{\mathbb{R}}
\newcommand{\T}{\mathbb{T}}
\newcommand{\Z}{\mathbb{Z}}
\newcommand{\C}{\mathbb{C}}
\newcommand{\Sp}{\mathfrak{S}}
\DeclareMathOperator{\tr}{tr}

\begin{document}

\title{
Global solutions for the Alber equation \\ in $H^1\Sp^1(\mathbb{T})$}
\author{Agissilaos Athanassoulis}
\address{Department of Mathematics, University of Dundee}
\email{aathanassoulis@dundee.ac.uk}
\date{\today}

\subjclass[2020]{Primary 35Q55; Secondary 35B35, 47B10, 37A60}
\keywords{Alber equation, mixed-state NLS, Schatten--Sobolev,
global existence, energy conservation, a priori bound, focusing}

\begin{abstract}
The Alber equation is the mixed-state  nonlinear Schr\"odinger equation with singular ($\delta$-interaction) kernel. It is used in the modeling of stochastic ocean waves, where it appears with the focusing sign in the nonlinearity, on $d=1.$ 
The main result of the paper is global well-posedness for self-adjoint, non-negative data in the Schatten-Sobolev space $H^1\mathfrak{S}^1(\T),$ for both the focusing and defocusing cases.
The Schatten class norms  achieve  control of the position density without derivative loss, and
a systematic Fourier-Galerkin argument tailored to the $\delta$ kernel allows us to establish several qualitative properties of the solution, including  energy conservation. In the focusing case, Hoffmann-Ostenhof and Gagliardo-Nirenberg estimates yield a global a priori $H^1\mathfrak{S}^1$ bound with no smallness condition. Non-negativity is a structural requirement for the energy argument to work. The propagation of higher Sobolev regularity $H^s\mathfrak{S}^1$   follows. As an application, small perturbations around Penrose-stable backgrounds are  shown to grow at most polynomially in $H^1\mathfrak{S}$ over long timescales.
\end{abstract}

\maketitle

\section{Introduction}\label{sec:intro}

\subsection{The equation}
The nonlinear Schr\"odinger equation (NLS)
\begin{equation}\label{eq:nls}
	i \partial_t \psi = p\Delta \psi + q |\psi|^2 \psi
\end{equation}
appears in quantum mechanics in the study of condensates and mean field limits. It also appears in a plethora of other fields (ocean waves, nonlinear optics, Langmuir waves etc) as a canonical nonlinear dispersive model \cite{MeiStiassnieYue2005b,SulemSulem1999}. When $pq>0,$ the equation is focusing, and the nonlinear focusing is competing with the linear dispersion. This leads to more complex energy balance, in contrast to the defocusing case,   $pq<0.$ 

Using von Neumann's distinction of the pure state  (which can be expressed in terms only of a wavefunction) $\psi$ and mixed state (necessarily an operator) $\gamma=\sum\limits\lambda_n|\phi_n\rangle\langle\phi_n|$  \cite{vonNeumann1927}, equation  \eqref{eq:nls} is the pure state problem, and the mixed state version of that problem reads
\begin{equation}\label{eq:Alber_op}
  i\partial_t \gamma = p[\Delta,\gamma]
  + q[V_{\rho_\gamma},\gamma],
\end{equation}
where $\gamma(t)$ is, by construction, a self-adjoint operator. Denoting the integral  kernel of $\gamma(t)$ by $K_\gamma(x,y,t),$ its {\em position density} $\rho_\gamma$ is defined as
\begin{equation}\label{eq:def_posden}
\rho_\gamma(x,t)=K_\gamma(x,x,t),
\end{equation}
and $V_f$ denotes the operator of multiplication by $f$. The mixed state problem \eqref{eq:Alber_op} can equivalently be formulated as a nonlocal PDE for the kernel,
\begin{equation}\label{eq:Alber0}
  i\partial_t K_\gamma = p(\Delta_x - \Delta_y)K_\gamma
  + q\bigl(\rho_\gamma(x,t)-\rho_\gamma(y,t)\bigr)K_\gamma.
\end{equation}
In the wider context of Hartree-type equations this is often called the {\em `singular kernel case'}, in the sense that Hartree equations have similar structure as \eqref{eq:Alber0}, but with $w\ast_x \rho_\gamma(x,t)$ in place of $\rho_\gamma(x,t)$ -- where $w$ is a  smoothing kernel.

The mixed-state formalism describes a statistical ensemble of wavefunctions, equivalent to the infinite coupled system
\begin{equation}\label{eq:Alber_system}
  i\partial_t \phi_n = p\Delta \phi_n + qV_\rho \phi_n,
  \qquad
  \rho(x,t) = \sum_{n\in\Z} \lambda_n |\phi_n(x,t)|^2,
  \qquad n\in\Z,
\end{equation}
where $\{\phi_n(t)\}\subset L^2(\T)$ is an orthonormal family and the weights satisfy $\lambda_n\ge 0,$ $\sum_n\lambda_n<\infty$. The non-negativity of the operator is encoded in the sign of the eigenvalues (and we can drop any zero eigenvalues to have  $\lambda_n> 0$ without loss of generality).  In the formalism of equation~\eqref{eq:Alber_op}, the wavefunctions $\phi_n$ correspond to the eigenfunctions of the operator $\gamma$ via the spectral decomposition $\gamma=\sum_n\lambda_n|\phi_n\rangle\langle\phi_n|$; thus \eqref{eq:Alber_op} encodes the infinite system~\eqref{eq:Alber_system} as a single operator equation.

Equation \eqref{eq:Alber0} admits homogeneous, steady-state  solutions of the form $\Gamma(x-y);$ a central question is the stability of such solutions. This leads to considering initial data of the form $K_\gamma(x,y,0) = \Gamma(x-y) + u(x,y,0),$ where $u$ denotes the {\em inhomogeneous perturbation} and $\Gamma$ the {\em homogeneous background}. The inhomogeneity $u$ therefore satisfies
\begin{equation}\label{eq:Alber1}
  i\partial_t u = p(\Delta_x - \Delta_y)u
  + q\bigl(u(x,x,t)-u(y,y,t)\bigr)\bigl(\Gamma(x-y)+u\bigr).
\end{equation}
Such problems appear in ocean waves \cite{Alber1978}, where the problem is naturally focusing, and in mean field models  \cite{ChenHongPavlovic2017,HadamaHong2025,HadamaRout2026}, where the problem is naturally defocusing. In both cases, the stability of homogeneous solutions is a key physical question.

There are two main formulations for the Alber equation, the operator valued \eqref{eq:Alber_op} and the PDE for the kernel \eqref{eq:Alber0}. Moreover, there is the stability-of-homogeneous-backgrounds version \eqref{eq:Alber1}, and its operator-valued counterpart. These equations appear with different names in different papers. In \cite{ChenHongPavlovic2017}, where only the defocusing case is considered,  it is called the {\em `infinite system of fermions'}. In \cite{HadamaHong2025}, where again only the defocusing case is considered -- investigating a broader class of backgrounds $\Gamma$ -- it is called {\em `the nonlinear Hartree equation with infinitely many particles and singular interaction'}. In an established ocean waves literature, where always the focusing case is considered, it is called the {\em `Alber equation'} after \cite{Alber1978}; cf. for example \cite{AAPS2020,AthanassoulisKyza2024,RibalBabaninYoungToffoliStiassnie2013,StiassnieRegevAgnon2008} and the references therein. In that context, $d=1$ corresponds to unidirectional wave propagation, which is a common scenario. To emphasize the singular interaction, the low dimension $d=1,$ and the inclusion of  the focusing case, we use the term {\em Alber equation} for all of these formulations. 

\subsection{State of the art}
The first question about the Alber equation \eqref{eq:Alber_op}, or equivalently \eqref{eq:Alber0}, is the well-posedness of the Cauchy problem. Local-in-time well-posedness can be shown on full space $(x,y)\in\mathbb{R}^{2d}$ \cite{AAPS2020} or the torus $(x,y)\in\mathbb{T}^{2d}$ \cite{AthanassoulisKarakatsaniKyza2025} by standard methods. 
Numerical evidence \cite{AthanassoulisKarakatsaniKyza2025,RibalBabaninYoungToffoliStiassnie2013,StiassnieRegevAgnon2008} shows no blow-up or ill-posed behavior even for strongly Penrose-unstable backgrounds. 

When trying to prove global existence of solutions, a  natural idea is to try and use the problem's own energy. This idea has a long history for Hartree equations \cite{BoveDaPratoFano1976,Zagatti1992}. The first issue is to rigorously establish the (formally obvious) conservation of energy -- this is where the regularity of the interaction kernel first enters the picture. In the focusing case, another key issue is to control one term of the energy by the other, so as to obtain an a priori bound for the solution. The aforementioned works \cite{BoveDaPratoFano1976,Zagatti1992} deal with  Coulomb and $L^p$ interaction kernels (no $\delta$ kernels).

More recently \cite{ChenHongPavlovic2017,HadamaHong2025,LewinSabin2015,LewinSabin2014},   homogeneous backgrounds  were added to the problem. On $\R^d,$ this necessarily leads  to infinite total mass and energy; this was addressed by introducing a {\em relative energy} for the perturbation in \cite{LewinSabin2015,LewinSabin2014}; however this also meant that the scope had to be limited to  defocusing problems only. The works  \cite{ChenHongPavlovic2017,HadamaHong2025} expand the approach to $\delta-$interaction kernels by careful regularization for the relative energy  on $\R^3$ --  for defocusing problems, and under the Pauli exclusion principle. 

In this paper we work on $\T,$ so even homogeneous backgrounds have finite total mass and  energy. This allows us to work with the {\em total} energy, and thus brings focusing problems within reach. Working on $\T$ also provides analytical underpinning for numerical investigations of the problem \cite{AthanassoulisKarakatsaniKyza2025,RibalBabaninYoungToffoliStiassnie2013,StiassnieRegevAgnon2008}, which necessarily take place on bounded domains. 
\medskip 

We should also mention
other approaches in related settings. One idea is to mirror the Landau-damping approach in classical kinetic equations: go to the form \eqref{eq:Alber1} and examine its linear stability, i.e., whether the solutions of
\begin{equation}\label{eq:Alber1lin}
  i\partial_t u_{\mathrm{lin}} = p(\Delta_x - \Delta_y)u_{\mathrm{lin}}
  + q\bigl(u_{\mathrm{lin}}(x,x,t)-u_{\mathrm{lin}}(y,y,t)\bigr) \Gamma(x-y)
\end{equation}
grow. This yields a Penrose-type linear stability condition for $\Gamma,$ followed by a perturbative nonlinear stability analysis. This is carried out in the preprint \cite{Smith2024}, and it includes focusing problems in $d\ge 3,$ but not $\delta$-interaction kernels. 

Finally, the recent preprint \cite{HadamaRout2026} proves global existence of solutions for both the focusing and defocusing problem with $\delta$ interaction kernel on $\T$ at the $\Sp^1(\T)$ level. Their approach is based on renormalized Strichartz estimates on $\T.$ 
To the best of our knowledge, \cite{HadamaRout2026} and the present paper are the only works containing a global well-posedness result for the focusing mixed-state cubic NLS in $d=1.$

\subsection{Ocean waves context}\label{subsec:ocean}

Alber's original derivation \cite{Alber1978} started from the Davey-Stewartson system, which governs the envelope in quasi-2D, with a main direction of propagation and   transverse dispersion. However, the transverse dependence can be integrated away, reducing the problem to one spatial dimension along the main direction of propagation. Much of the subsequent literature works in the setting $d=1$ \cite{AAPS2020,RibalBabaninYoungToffoliStiassnie2013,StiassnieRegevAgnon2008},  and this is the setting for the present paper. Genuine two-dimensional extensions  include crossing seas (i.e.\ two wavesystems carrying $O(1)$ energy traveling in two genuinely different directions) \cite{AthanassoulisGramstad2021}, as well as fully two-dimensional distributions of energy \cite{AndradeStiassnie2020,CrawfordSaffmanYuen1980}.

The main role of the  Alber equation is to refine the study of the modulation instability  (the linear instability of plane waves) \cite{MeiStiassnieYue2005b,MI,ZakharovOstrovsky2009} to realistic ocean wave spectra -- which can be narrow, but not quite plane waves \cite{Ochi} . This question is precisely the stability of homogeneous backgrounds, and is most interesting around backgrounds that are close to the stability threshold -- on either side of it.
This bifurcation has direct consequences for marine safety, as unstable sea states are believed to promote the formation of rogue waves \cite{AthanassoulisKarakatsaniKyza2025,Dysthe2008,RibalBabaninYoungToffoliStiassnie2013,StiassnieRegevAgnon2008}. Theorem~\ref{thm:nlstab} provides a first rigorous nonlinear stability result in the Penrose-stable regime; we expect that this could be substantially improved in future work. More broadly, this paper puts the numerical and asymptotic treatment of this problem on rigorous footing.

\section{Main results}\label{sec:mainresults}

Here we will go over the main results, together with the strategy of proof and a roadmap of the paper.
Note that  the energy associated with \eqref{eq:Alber_op} is
\begin{equation}\label{eq:E_def}
  E(\gamma) = p\,\tr(\Delta\,\gamma) + \frac{q}{2}\|\rho_\gamma\|_{L^2}^2.
\end{equation}
The energy is finite for every self-adjoint, non-negative $\gamma\in H^1\Sp^1(\T)$ (cf. Lemma~\ref{lem:E_finite}). 
Proving rigorously the conservation of the energy is a key step.

\subsection{Statement of the main results}\label{subsec:main_thms}

\begin{mainthm}[Global well-posedness for the Alber equation, arbitrary data]\label{thm:main}
Let $pq\ne 0$, and let $\gamma_0\in H^1\Sp^1(\T)$ be self-adjoint and non-negative. Then:
\begin{itemize}
\item[(a)] The Alber equation~\eqref{eq:Alber_op} has a unique global mild solution
\[
  \gamma\in C(\R;\,H^1\Sp^1(\T)),
\]
which conserves mass $\|\gamma(t)\|_{\Sp^1}$, Hilbert--Schmidt norm $\|\gamma(t)\|_{\Sp^2}$, and energy $E(\gamma(t));$ and there exists a constant $\bar Y=\bar Y(\|\gamma_0\|_{H^1\Sp^1},p,q,|\T|),$ given explicitly in Remark~\ref{rem:barY} (with distinct closed forms in the focusing case $pq>0$ and the defocusing case $pq<0$), such that
\[
\|\gamma(t)\|_{H^1\Sp^1} \le \bar Y \quad \forall t \in \R.
\]
\item[(b)] For every $s\ge 2,$ if $\gamma_0\in H^s\Sp^1(\T),$ then
\[
  \gamma\in C(\R;\,H^s\Sp^1(\T))\cap C^1(\R;\,H^{s-2}\Sp^1(\T)),
\]
with an at-most exponential-in-$t$ bound on $\|\gamma(t)\|_{H^s\Sp^1}$ (with rate governed by $\bar Y$), and~\eqref{eq:Alber_op} holds in the strong sense in $H^{s-2}\Sp^1.$
\end{itemize}
\end{mainthm}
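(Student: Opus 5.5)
The proof follows the standard local theory, conservation laws and a priori bound scheme, adapted to the Schatten--Sobolev setting.

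\textbf{Local theory.} We write \eqref{eq:Alber_op} in Duhamel form,
\[
\gamma(t)=e^{-ipt\Delta}\gamma_0e^{ipt\Delta}-iq\int_0^t e^{-ip(t-s)\Delta}[V_{\rho_{\gamma(s)}},\gamma(s)]e^{ip(t-s)\Delta}\,ds.
\]
Conjugation by the free flow is an isometry of $H^1\Sp^1(\T)$, where we take the norm $\|\langle\nabla\rangle\gamma\langle\nabla\rangle\|_{\Sp^1}$ or an equivalent one.

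The key estimate is a trace-type bound
\[
\|\rho_\gamma\|_{H^1(\T)}\lesssim\|\gamma\|_{H^1\Sp^1}.
\]
It holds because $\rho_\gamma=\sum\lambda_n|\phi_n|^2$, and $\partial_x|\phi_n|^2=2\mathrm{Re}(\bar\phi_n\partial_x\phi_n)$ is controlled in $L^1$ by $\|\phi_n\|_{H^1}^2$. This needs an estimate in $L^2$ rather than $L^1$, so I would instead use the Fourier-side formula $\hat\rho(k)=\sum_m\hat K(m+k,m)$ with Schur-type bounds. In $d=1$ we also have $H^1(\T)\subset L^\infty$, so multiplication by $\rho$ is bounded on $H^1$. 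This gives
\[
\|[V_\rho,\gamma]\|_{H^1\Sp^1}\lesssim\|\rho\|_{W^{1,\infty}\cap H^1}\|\gamma\|_{H^1\Sp^1}.
\]
Strictly, only $\rho\in H^1$ is available. I would handle this by estimating $\langle\nabla\rangle V_\rho\langle\nabla\rangle^{-1}$ as bounded on $L^2$, using $\rho\in H^1\subset L^\infty$ and $\rho'\in L^2$ together with the one-dimensional Sobolev embedding $\langle\nabla\rangle^{-1}:L^2\to L^\infty$. The result is a locally Lipschitz nonlinearity, so Banach's fixed point theorem gives a unique local mild solution. Its existence time depends only on $\|\gamma_0\|_{H^1\Sp^1}$, and the blow-up alternative follows.

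\textbf{Structure and conservation.} Self-adjointness is preserved by uniqueness. Non-negativity and the conservation of all Schatten norms follow from writing $\gamma(t)=U(t)\gamma_0U(t)^*$, where $U$ is the unitary propagator of the linear equation $i\partial_tU=(p\Delta+qV_{\rho(t)})U$ with the potential frozen at the solution's own $\rho$. Energy conservation is the delicate step, because at $H^1$ regularity the formal computation of $\frac{d}{dt}E$ is not justified. The plan is a Fourier--Galerkin truncation. Project onto modes $|k|\le N$, keeping the $\delta$ interaction intact at the truncated level, so that the truncated system is a finite Hamiltonian ODE that exactly conserves a truncated energy $E_N$, mass and $\Sp^2$. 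Then show, using the uniform local bounds above, that the Galerkin solutions converge to $\gamma$ in $C([0,T];H^1\Sp^1)$ for $H^1$ data, or first for smooth data and then by density. Since $E$ is continuous on $H^1\Sp^1$ by Lemma~\ref{lem:E_finite}, we obtain $E(\gamma(t))=E(\gamma_0)$. I expect this to be the main obstacle: designing the truncation so that it both conserves energy exactly and converges without derivative loss in the $\delta$-kernel term.

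\textbf{A priori bound and globality.} Set $Y=\tr(-\Delta\gamma)=\sum\lambda_n\|\phi_n'\|^2$.

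In the defocusing case, $-pY$ and $\frac q2\|\rho\|_2^2$ have the same sign in $E$. Hence $Y\le|E|/|p|$ directly.

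In the focusing case, we use the Hoffmann-Ostenhof inequality $\|(\sqrt\rho)'\|_2^2\le Y$. This combines with the Gagliardo--Nirenberg inequality on $\T$ applied to $f=\sqrt\rho$, giving
\[
\|\rho\|_2^2=\|f\|_4^4\lesssim\|f\|_2^3\|f'\|_2+|\T|^{-1}\|f\|_2^4.
\]
Here $\|f\|_2^2=\tr\gamma$ is conserved, so $\|\rho\|_2^2\le C(M)(Y^{1/2}+1)$, where $M=\tr\gamma$ denotes the conserved mass. This term is sublinear in $Y$, and energy conservation then gives $|p|Y\le|E|+C(Y^{1/2}+1)$. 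This yields the bound $\bar Y$ with no smallness condition.

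Non-negativity enters in both $\rho\ge0$ and $\sum\lambda_n\|\phi_n'\|^2\sim\|\gamma\|_{H^1\Sp^1}$. The bound on $\|\gamma\|_{H^1\Sp^1}$ follows, and the blow-up alternative gives a global solution. Continuity in $t$ follows from the mild formulation.

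\textbf{Part (b).} For higher regularity, $\|\rho\|_{H^s}\lesssim\|\gamma\|_{H^s\Sp^1}$ holds as above. A tame commutator estimate gives
\[
\|[V_\rho,\gamma]\|_{H^s\Sp^1}\lesssim\|\rho\|_{H^1}\|\gamma\|_{H^s\Sp^1}+\|\rho\|_{H^s}\|\gamma\|_{H^1\Sp^1}\lesssim\bar Y\|\gamma\|_{H^s\Sp^1}.
\]
Gronwall's inequality in the Duhamel formula then gives exponential growth at rate $C\bar Y$. Finally, $\partial_t\gamma=-ip[\Delta,\gamma]-iq[V_\rho,\gamma]\in C(\R;H^{s-2}\Sp^1)$, so the equation holds strongly in $H^{s-2}\Sp^1$.
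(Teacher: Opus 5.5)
Your part (a) is correct and takes a genuinely different route from the paper. The paper obtains every qualitative property through approximants. It truncates only the \emph{data}, $P_N\gamma_0P_N$, and solves the full equation for these through the smooth orbital system of Lemma~\ref{lem:finite_NLS}, where Gram-matrix conservation, energy conservation and the Hoffmann--Ostenhof/Gagliardo--Nirenberg bound are computed orbital by orbital. It then lifts everything by $H^1\Sp^1$ Lipschitz stability.

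You instead get non-negativity and conservation of every Schatten norm at once from $\gamma(t)=U(t)\gamma_0U(t)^*$, which is cleaner. You then truncate the \emph{equation}, and the obstacle you anticipate there does not materialize. The truncated flow is $i\dot\gamma_N=[h_N,\gamma_N]$ with $h_N=P_N(p\Delta+qV_{\rho_{\gamma_N}})P_N$, so $\frac{d}{dt}E(\gamma_N)=\tr(h_N\dot\gamma_N)=0$ exactly. Since $A\mapsto P_NAP_N$ is a contraction of $H^1\Sp^1$ commuting with $S(t)$, the truncated Duhamel maps are Lipschitz uniformly in $N$. Convergence in $C([0,T];H^1\Sp^1)$ then follows from the bilinear bound together with $\sup_{s\le T}\|\mathcal N(\gamma(s))-P_N\mathcal N(\gamma(s))P_N\|_{H^1\Sp^1}\to0$, because the image of $[0,T]$ under $s\mapsto\mathcal N(\gamma(s))$ is compact. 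No derivative loss arises, because the nonlinearity already closes in $H^1\Sp^1$. To land exactly on the constant of Remark~\ref{rem:barY}, use the identity $\mathcal K(t)=\mathcal K(0)+\frac{q}{2p}(\|\rho(t)\|_{L^2}^2-\|\rho_0\|_{L^2}^2)$ rather than $|E_0|$.

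The gap is in part (b). Your tame estimate $\|[V_\rho,\gamma]\|_{H^s\Sp^1}\lesssim\|\rho\|_{H^1}\|\gamma\|_{H^s\Sp^1}+\|\rho\|_{H^s}\|\gamma\|_{H^1\Sp^1}$ is false for a general density, even with $\gamma\ge0$. Take $s=2$, $\rho=\cos(Lx)$, and $\gamma=|\phi\rangle\langle\phi|$ with $\phi$ the normalization of $1+L^{-3/2}e^{iLx}$. Then $\|\gamma\|_{H^1\Sp^1}\sim1$, $\|\gamma\|_{H^2\Sp^1}\sim L$, $\|\rho\|_{H^1}\sim L$ and $\|\rho\|_{H^2}\sim L^2$, so your right-hand side is $O(L^2)$. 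But $[V_\rho,\gamma]=|\rho\phi\rangle\langle\phi|-|\phi\rangle\langle\rho\phi|$ has $H^2\Sp^1$ norm $\gtrsim\|\rho\phi\|_{H^2}\|\phi\|_{H^2}\sim L^{5/2}$. The operator analogue of the $L^\infty$ factor in Kato--Ponce is not controlled by $\|\gamma\|_{H^1\Sp^1}$.

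Your final bound $\lesssim\bar Y\|\gamma\|_{H^s\Sp^1}$ comes out right only because the non-tame trace estimate $\|\rho\|_{H^s}\lesssim\|\gamma\|_{H^s\Sp^1}$ compensates for the error. As written the step fails, and without a bound linear in $\|\gamma\|_{H^s\Sp^1}$, Gr\"onwall only gives a Riccati-type inequality that allows blow-up.

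The missing idea is to use $\rho=\rho_\gamma$ together with $\gamma=\sum_j\lambda_j|\phi_j\rangle\langle\phi_j|\ge0$. By Kato--Ponce and Cauchy--Schwarz in $j$,
\[
\|\rho_\gamma\|_{H^s}\lesssim\sum_j\lambda_j\|\phi_j\|_{L^\infty}\|\phi_j\|_{H^s}\lesssim\|\gamma\|_{H^1\Sp^1}^{1/2}\|\gamma\|_{H^s\Sp^1}^{1/2}.
\]
Then
\[
\|V_{\rho_\gamma}\gamma\|_{H^s\Sp^1}\le\sum_j\lambda_j\|\rho_\gamma\phi_j\|_{H^s}\|\phi_j\|_{H^s}\lesssim\|\rho_\gamma\|_{H^s}\|\gamma\|_{H^1\Sp^1}^{1/2}\|\gamma\|_{H^s\Sp^1}^{1/2}+\|\rho_\gamma\|_{L^\infty}\|\gamma\|_{H^s\Sp^1},
\]
and both terms are $\lesssim\|\gamma\|_{H^1\Sp^1}\|\gamma\|_{H^s\Sp^1}$. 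This is the mechanism of Step~5 of Lemma~\ref{lem:finite_NLS}, which the paper runs on the smooth approximants and transfers to $\gamma$ by lower semicontinuity. With your non-negativity result you can run it directly on $\gamma(t)$, once local $H^s\Sp^1$ persistence is in place.
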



\smallskip 

The following  is a first example of how the global $H^1\Sp^1$ bound can be used for stable backgrounds:

\begin{mainthm}[Polynomial-in-$t$ control near Penrose-stable backgrounds]
\label{thm:nlstab}
Let $pq\ne 0$ (either sign), let $\Gamma$ be a Penrose-stable background on $\T$ with constant $\kappa>0$ in the sense of Definition~\ref{def:penrose} (in particular $\Gamma=\Gamma^*\ge 0$ and $\Gamma\in H^1\Sp^1$), and let $\varepsilon>0.$ For every self-adjoint perturbation $U_0\in H^1\Sp^1(\T)$ with $\|U_0\|_{H^1\Sp^1}\le\varepsilon$ and $\gamma_0:=\Gamma+U_0\ge 0,$ the solution $\gamma(t)$ of~\eqref{eq:Alber_op} exists globally in $C(\R;\,H^1\Sp^1(\T))$ and
\begin{equation}\label{eq:nlstab_bound}
  \|\gamma(t)-\Gamma\|_{H^1\Sp^1}
  \;\le\; 2\,C_\star(\Gamma)\,
  \langle t\rangle^2\,\varepsilon
  \qquad\text{for all }|t|\le T_\star(\Gamma,\varepsilon),
\end{equation}
where
\begin{equation}\label{eq:Tstar_def}
  T_\star(\Gamma,\varepsilon)
  \;=\; c_\Gamma\,\varepsilon^{-1/5},
\end{equation}
and $C_\star(\Gamma)=C_\star(\Gamma,p,q,\kappa)$ and $c_\Gamma=c_\Gamma(\Gamma,p,q,\kappa)$ are positive constants independent of $\varepsilon.$ In particular, 
\[
\|\gamma(t)-\Gamma\|_{H^1\Sp^1}\lesssim\varepsilon^{3/5}
\]
uniformly on the time window $|t|\le T_\star$.
\end{mainthm}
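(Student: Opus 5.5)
Global existence in $C(\R;H^1\Sp^1)$ comes directly from Theorem~\ref{thm:main}(a), since $\gamma_0=\Gamma+U_0$ is self-adjoint, non-negative and lies in $H^1\Sp^1$. So only the bound \eqref{eq:nlstab_bound} needs proof. I would write $U(t)=\gamma(t)-\Gamma$. Because $\Gamma$ is translation invariant, it commutes with $\Delta$, and the self-consistent potential $V_{\rho_\Gamma}$ is constant, so it drops out of the commutator. Hence $U$ solves the operator form of \eqref{eq:Alber1}:
\[
i\partial_t U = p[\Delta,U] + q[V_{\rho_U},\Gamma] + q[V_{\rho_U},U].
\]
I would treat this by Duhamel around the linearized flow \eqref{eq:Alber1lin}. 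The first step is a linear estimate: the solution operator $S(t)$ of $i\partial_t U=p[\Delta,U]+q[V_{\rho_U},\Gamma]$ satisfies $\|S(t)\|_{H^1\Sp^1\to H^1\Sp^1}\le C_\star(\Gamma)\langle t\rangle^2$.

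To prove the linear estimate, I would project onto Fourier modes $k\neq 0$ of the density. Using the Duhamel formula for the free flow $e^{-itp\,\mathrm{ad}_\Delta}$, the density $\hat\rho_U(k,t)$ satisfies a Volterra equation $\hat\rho(k,t)=\hat\rho_{\mathrm{free}}(k,t)+\int_0^t \mathcal K_k(t-s)\hat\rho(k,s)\,ds$. Here the kernel $\mathcal K_k$ is built from $\hat\Gamma$. The Penrose condition of Definition~\ref{def:penrose}, with constant $\kappa$, gives invertibility of $1-\hat{\mathcal K}_k$ uniformly in $k$ along the Laplace contour. From this I would get a resolvent bound and hence $|\hat\rho(k,t)|\lesssim\kappa^{-1}\langle t\rangle\sup_{s\le t}|\hat\rho_{\mathrm{free}}(k,s)|$.

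I would control $\hat\rho_{\mathrm{free}}$ by $\|U_0\|_{\Sp^1}$, using the Schatten-class control of the density (no derivative loss). I would then feed $\rho_U$ back into the Duhamel formula for $U$ in $H^1\Sp^1$. This costs one further factor of $\langle t\rangle$, from $\int_0^t\|[V_\rho,\Gamma]\|_{H^1\Sp^1}\,ds$, using $\Gamma\in H^1\Sp^1$ and $\|\rho\|_{H^1}$. The two factors together give $\langle t\rangle^2$. I expect this step to be the main obstacle: making the Penrose resolvent bound uniform in $k$ on $\T$, and checking that the $H^1$ weight on $\rho$ does not lose derivatives. I would first try to estimate $\|V_\rho\gamma\|_{H^1\Sp^1}\lesssim\|\rho\|_{H^1}\|\gamma\|_{H^1\Sp^1}$ using $H^1(\T)\subset L^\infty$.

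Next comes the nonlinear bootstrap. Let $A(T)=\sup_{|t|\le T}\langle t\rangle^{-2}\|U(t)\|_{H^1\Sp^1}$. Duhamel gives
\[
U(t)=S(t)U_0 - iq\int_0^t S(t-s)[V_{\rho_{U(s)}},U(s)]\,ds.
\]
The commutator term is quadratic: $\|[V_{\rho_U},U]\|_{H^1\Sp^1}\lesssim\|U\|_{H^1\Sp^1}^2$, via $\|\rho_U\|_{H^1}\lesssim\|U\|_{H^1\Sp^1}$. Here the global bound $\bar Y$ from Theorem~\ref{thm:main} guarantees continuity in time and that no blow-up interferes. This gives
\[
\|U(t)\|_{H^1\Sp^1}\le C_\star\langle t\rangle^2\varepsilon + C\int_0^t\langle t-s\rangle^2\langle s\rangle^4A^2\,ds \le C_\star\langle t\rangle^2\varepsilon + C'\langle t\rangle^7 A^2.
\]
Dividing by $\langle t\rangle^2$ yields $A\le C_\star\varepsilon+C'\langle T\rangle^5A^2$.

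A continuity argument then gives $A\le 2C_\star\varepsilon$ provided $4C'C_\star\langle T\rangle^5\varepsilon\le1$. This is exactly the condition $T\le c_\Gamma\varepsilon^{-1/5}$, matching \eqref{eq:Tstar_def}. On this window, $\langle t\rangle^2\varepsilon\lesssim\varepsilon^{1-2/5}=\varepsilon^{3/5}$, which gives the final claim.
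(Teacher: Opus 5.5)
Your plan follows the paper's proof essentially step for step: global existence from Theorem~\ref{thm:main}, reduction of the linearized flow to scalar Volterra equations for $\hat\rho_U(k,\cdot)$, one factor $\langle t\rangle$ from the Penrose-controlled density and one from the Duhamel integral of $[V_{\rho_U},\Gamma]$ (Proposition~\ref{prop:propagator} and Corollary~\ref{cor:poly_propagator}), then the same bootstrap with $\int_0^t\langle t-s\rangle^2\langle s\rangle^4\,ds\le\langle t\rangle^7$ closing for $\langle t\rangle^5\lesssim\varepsilon^{-1}$.

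The step you leave open, turning the Penrose bound into pointwise $\langle t\rangle$ growth, is done in the paper on the shifted contour $\operatorname{Re}\lambda=\eta$, because on $\T$ the transform $\tilde\Phi_k$ has poles on the imaginary axis and the contour cannot be pushed there. Plancherel and Lemma~\ref{lem:PW_shifted} give $\int_0^\infty e^{-2\eta t}|\hat\rho_U(k,t)|^2\,dt\lesssim M_k^2/(\kappa^2\eta)$ with $M_k=\sum_j|U_{j+k,j}(0)|$. Cauchy--Schwarz in the Volterra equation then gives $|\hat\rho_U(k,t)|\lesssim(1+(\kappa\eta)^{-1})e^{\eta t}M_k$, and choosing $\eta=1/\langle t\rangle$ yields your $\langle t\rangle$. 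Note also that the $\langle k\rangle^2$-weighted sum in $k$ needs $\sum_k\langle k\rangle^2M_k^2\lesssim\|U_0\|_{H^1\Sp^1}^2$ (Lemma~\ref{lem:fourier_sum}); the $\Sp^1$ bound $M_k\le\|U_0\|_{\Sp^1}$ alone is not enough.
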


\begin{remark}[Theorem \ref{thm:nlstab} is more than $H^1\Sp^1$ well-posedness]
\label{rem:small_not_penrose}

 The well-posedness of the problem, by virtue of Theorem~\ref{thm:main} applied to the setting of Theorem \ref{thm:nlstab}, automatically yields an exponential bound of the form
 \[
\|\gamma(t)-\Gamma\|_{H^1\Sp^1}\le \varepsilon e^{Ct}.
 \]
There exist Penrose-unstable backgrounds $\Gamma\in H^1\Sp^1(\T)$ for which $\|\gamma(t)-\Gamma\|_{H^1\Sp^1}\gtrsim\varepsilon e^{ct}$ — an explicit example is constructed in Remark~\ref{rem:explicit_unstable}. In the ocean-wave context, such exponential growth of sideband perturbations is the classical modulation instability \cite{MI,ZakharovOstrovsky2009}. This shows that the exponential growth predicted by raw well-posedness can in fact be saturated on Penrose-unstable backgrounds, and highlights that Theorem~\ref{thm:nlstab} provides genuinely better control in the Penrose-stable case than the one obtained from well-posedness alone. We believe that Theorem~\ref{thm:nlstab} may be substantially improved, possibly to genuine orbital stability, $\|\gamma(t)-\Gamma\|_{H^1\Sp^1}=o(1)$ as $t\to\infty.$
\end{remark}

\subsection{Strategy of proof}\label{subsec:strategy}

The proof of Theorem~\ref{thm:main} proceeds as follows. We show local well-posedness at the operator level \eqref{eq:Alber_op}. To extend to all time, we need several qualitative properties --- chiefly non-negativity (a prerequisite for many estimates) and conservation of energy. To establish these properties, we apply a Fourier-Galerkin projection to the initial datum $\gamma_0$ and form a finite-rank system of the form \eqref{eq:Alber_system}. Lemma~\ref{lem:finite_NLS} establishes global well-posedness and qualitative properties for any smooth finite-rank system, and Lemma~\ref{lem:finite_system} lifts these to the operator-valued solution in the limit. Thus, the work of showing the energy conservation and the other qualitative properties takes place at the level of a finite-rank system, and is then passed back up to the operator level by $H^1\Sp^1$ continuity. Section~\ref{sec:proof_thms} assembles these results into the global-in-time argument.

\smallskip

We now sketch the argument for the a priori $H^1\Sp^1$ bound, for the focusing case. We will use the conservation of mass $\|\gamma(t)\|_{\Sp^1}=\mathcal{M},$ and of the energy~\eqref{eq:E_def} which in particular implies
\begin{equation}\label{eq:pke1}
  |p|\,\tr(-\Delta\gamma(t))
  \;\le\; |E(\gamma_0)| + \frac{|q|}{2}\,\|\rho_{\gamma(t)}\|_{L^2}^2.
\end{equation}
Thus, closing the bound on the kinetic energy
\[
\mathcal{K}(t):=\tr(-\Delta\gamma(t))
\] 
reduces to controlling the nonlinear potential energy $\|\rho_{\gamma(t)}\|_{L^2}^2$ by  $\mathcal{K}(t)$. The na\"ive estimate $\|\rho_\gamma\|_{L^2}^2\le\|\rho_\gamma\|_{L^1}\|\rho_\gamma\|_{L^\infty}\le C\,\mathcal{M}\bigl(\mathcal{M}+\mathcal{K}(t)\bigr)$ via Lemma \ref{lem:Bessel}, is \emph{linear} in $\mathcal{K}(t)$ and closes only for sufficiently small mass.
Instead, we will  combine the  Hoffmann--Ostenhof inequality~\cite{HoffmannOstenhof1977}
\[
  \int_\T\bigl|\nabla\sqrt{\rho_\gamma}\bigr|^2\,dx
  \;\le\;
  \tr(-\Delta\gamma),
  \qquad \gamma\ge 0,
\]
and the Gagliardo-Nirenberg inequality
\[
  \|u\|_{L^4(\T)}^{\,4}
  \;\le\; \frac{1}{|\T|}\,\|u\|_{L^2}^4 + 2\,\|u\|_{L^2}^3\,\|\nabla u\|_{L^2},
\]
to the effect of\begin{align*}
  \|\rho_\gamma\|_{L^2}^2 \;=\; \|\sqrt{\rho_\gamma}\|_{L^4}^4
  &\;\stackrel{\text{GN}}{\le}\;
  C\,\|\sqrt{\rho_\gamma}\|_{L^2}^3\,\|\nabla\sqrt{\rho_\gamma}\|_{L^2}
  + C\,\frac{\|\sqrt{\rho_\gamma}\|_{L^2}^4}{|\T|}\\
  &\;\stackrel{\text{HO}}{\le}\;
  C\,\mathcal{M}^{3/2}\,\sqrt{\mathcal{K}(t)}
  + C\,\frac{\mathcal{M}^2}{|\T|},
\end{align*}
using $\|\sqrt{\rho_\gamma}\|_{L^2}^2=\|\rho_\gamma\|_{L^1}=\mathcal{M}$ in the last step.
This, combined with \eqref{eq:pke1}, leads to the \emph{sublinear} bound
\[
\mathcal{K}(t) \le A + B \sqrt{\mathcal{K}(t)}
\]
for appropriate $A,B>0,$
which allows a global a priori bound in the energy without smallness assumption.
The argument is carried out in detail in Step~4 of Lemma~\ref{lem:finite_NLS}, on the level of a finite rank system.

In the defocusing case ($pq<0$), the kinetic and potential terms in~\eqref{eq:E_def} carry the same sign, so $|p|\,\mathcal{K}(t)\le|E(\gamma_0)|$ directly from energy conservation, and the a priori bound is direct.

\smallskip 

Theorem~\ref{thm:nlstab} is proved in Section~\ref{subsec:proof_nlstab}. It uses the Schatten-class bounds for the linearized propagator that are established in Section~\ref{subsec:setup_linstab}. This requires a shifted Bromwich-contour bound for the linearized propagator, optimized in the shift $\eta=\eta(t)\sim 1/\langle t\rangle$, producing an algebraic time window on which the perturbation grows at most polynomially. The argument is sign-agnostic and handles both the focusing and defocusing cases uniformly. While this result is short of actual orbital stability, it demonstrates that it is possible in principle to use the Penrose stability in terms of the Schatten-Sobolev norms, and thus directly exploit  the energy bound.
Finally, the appendix collects some standard technical inputs on the Volterra kernel $\Phi_k$ used in the proof of Proposition~\ref{prop:propagator}.


\subsection{Organization of the paper}
Section~\ref{sec:framework} introduces the functional framework (Schatten--Sobolev spaces, and estimates that will be used later). Section~\ref{sec:galerkin} sets up the mild formulation and local well-posedness, then develops the Fourier--Galerkin approximation: the static truncation (Proposition~\ref{prop:galerkin}), the smooth orbital NLS engine (Lemma~\ref{lem:finite_NLS}), and the bridge lemma (Lemma~\ref{lem:finite_system}) which lifts conservation laws, positivity, a priori bounds, and higher-regularity estimates from the smooth approximant to any $H^1\Sp^1$ solution. The proof of Theorem~\ref{thm:main} is carried out in Section~\ref{sec:proof_thms}, using heavily the previous Sections. Section~\ref{sec:stab} proves Theorem~\ref{thm:nlstab} by establishing a propagator estimate for the linearization around a Penrose-stable background and closing a bootstrap argument. Appendix~\ref{app:volterra} collects the properties of the Volterra kernel $\Phi_k$ and the auxiliary lemmas used in the proof of the propagator estimate.

\section{Functional framework and notation}\label{sec:framework}

\subsection{Notation and Schatten--Sobolev spaces}\label{subsec:notation}

Without loss of generality, we identify $\T = \R/(2\pi\Z)$ throughout the paper, and use the unitary Fourier convention
\[
  \hat f(n) = \frac{1}{\sqrt{2\pi}}\int_0^{2\pi} f(x)\,e^{-inx}\,dx,
  \qquad
  f(x) = \frac{1}{\sqrt{2\pi}}\sum_{n\in\Z}\hat f(n)\,e^{inx},
\]
under which $\|f\|_{L^2(\T)}^2 = \sum_n|\hat f(n)|^2$ and $-\Delta$ has eigenvalues $n^2$ on the modes $e^{inx}.$ We are using throughout the Japanese bracket
\[
\langle\xi\rangle=(1+|\xi|^2)^{1/2}
\]
and  the Bessel potential operator
\[
\langle D\rangle=(1-\Delta)^{1/2},
\]
which acts as $\langle n\rangle = (1+n^2)^{1/2}$ on $e^{inx}.$ For $x_0\in\T,$ we write $\delta_{x_0}=\delta(\,\cdot\,-x_0)$ for the Dirac delta centred at $x_0.$

We write $C$ for a positive constant that may change from line to line; named constants such as $B_s$ (the Bessel-kernel constant of Lemma~\ref{lem:Bessel}) retain their values throughout, and $B_1$ denotes its specialization at $s=1.$

\begin{definition}[Multiplication operator $V_f$]\label{def:Vf}
For $f\in L^\infty(\T),$ $V_f$ denotes the multiplication operator $V_f:L^2(\T)\to L^2(\T),$ $h\mapsto fh.$
\end{definition}

\begin{definition}[Operator norm]\label{def:op_norm}
The operator norm on $L^2(\T)$ is denoted $\|\cdot\|_{\mathrm{op}};$ it coincides with the $\Sp^\infty$ norm as defined below.
\end{definition}

\begin{definition}[Schatten spaces]
For $1\le p<\infty$, the Schatten class $\Sp^p=\Sp^p(L^2(\T))$ consists of compact operators $A$ for which $\tr|A|^p<\infty$, with norm $\|A\|_{\Sp^p}=\bigl(\tr|A|^p\bigr)^{1/p}$, where $|A|=(A^*A)^{1/2}.$ In particular $\Sp^1$ is the trace class and $\Sp^2$ is the Hilbert--Schmidt class (isometric to $L^2(\T^2)$ via the kernel). We set $\Sp^\infty:=B(L^2(\T))$, the bounded operators on $L^2(\T)$ with the operator norm. The inclusion $\Sp^1\subset\Sp^2\subset\Sp^\infty$ holds with $\|\cdot\|_{\Sp^\infty} \le\|\cdot\|_{\Sp^2} \le\|\cdot\|_{\Sp^1}$.
\end{definition}

The free evolution (propagator of \eqref{eq:Alber_op} for $q=0$) is the conjugation
\begin{equation}\label{eq:free}
  S(t)\gamma_0 = e^{-ip\Delta t}\,\gamma_0\,e^{ip\Delta t},
\end{equation}
which is an isometry on every Schatten class.

\begin{definition}[Schatten--Sobolev spaces]
For $s\ge 0$ and $1\le r\le\infty$, define the symmetric Schatten--Sobolev space and norm as
\begin{equation}\label{eq:HsSr}
  H^s\Sp^r = \bigl\{\gamma\in\Sp^r :
  \langle D\rangle^s\gamma\langle D\rangle^s
  \in\Sp^r\bigr\},
  \quad
  \|\gamma\|_{H^s\Sp^r}
  = \|\langle D\rangle^s\gamma
  \langle D\rangle^s\|_{\Sp^r}.
\end{equation}
\end{definition}

The free evolution~\eqref{eq:free} commutes with $\langle D\rangle$, hence $S(t)$ is isometric on $H^s\Sp^r$ for every $s$ and $r$.

\begin{remark}[Kernel regularity]\label{rem:kernel_reg}
If $\gamma\in H^s\Sp^1$, then $\langle D_x\rangle^s\langle D_y\rangle^s K_\gamma\in L^2(\T^2)$ (via $\Sp^1\subset\Sp^2\cong L^2(\T^2)$), i.e.\ $K_\gamma\in H^s_x H^s_y(\T^2)$. Since $\langle\xi\rangle^s\langle\eta\rangle^s \ge\langle(\xi,\eta)\rangle^s$ for $s\ge 0,$ 
\[
\gamma \in H^s\Sp^1(\T)\implies K_\gamma \in  H^s(\T^2)
\]
\end{remark}

\subsection{Background material}\label{subsec:background}

We collect here well known results used throughout the paper.

\begin{remark}[Sobolev embedding]\label{rem:sobolev}
Since $d=1$ and $s>1/2$, the Sobolev embedding $H^s(\T)\hookrightarrow L^\infty(\T)$ holds; we use $\|f\|_{L^\infty}\le C\|f\|_{H^s}$ throughout without further comment.
\end{remark}

\begin{proposition}[Schatten--H\"older inequality]\label{prop:schatten_holder}
Let $r,r_1,r_2\in[1,\infty]$ with $\frac{1}{r}=\frac{1}{r_1}+\frac{1}{r_2},$ and let $A\in\Sp^{r_1}$, $B\in\Sp^{r_2}.$ Then $AB\in\Sp^r$ and
\begin{equation}\label{eq:schatten_holder}
  \|AB\|_{\Sp^r}\le\|A\|_{\Sp^{r_1}}\|B\|_{\Sp^{r_2}}.
\end{equation}
\end{proposition}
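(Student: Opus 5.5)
The plan is to reduce the Schatten--H\"older inequality to the scalar H\"older inequality for singular values, using the Weyl--Horn majorization of the singular values of a product. Throughout, $s_j(T)$ denotes the singular values of a compact operator $T$, in non-increasing order, so that $\|T\|_{\Sp^r}=\|(s_j(T))_j\|_{\ell^r}$.

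\emph{Endpoint cases.} If $r_2=\infty$, then $r=r_1$. The inequality $s_j(AB)\le s_j(A)\|B\|_{\mathrm{op}}$ follows from the min--max characterization
\[
s_j(T)=\min_{\operatorname{rank}F< j}\|T-F\|_{\mathrm{op}},
\]
applied with $F=F_AB$, where $F_A$ is a near-optimal rank-$(j-1)$ approximant of $A$. Taking the $\ell^{r_1}$ norm gives the claim. The case $r_1=\infty$ is symmetric, either by the same argument with $AF_B$ or by passing to adjoints, since $s_j(T^*)=s_j(T)$. The case $r=\infty$ forces $r_1=r_2=\infty$ and is just submultiplicativity of the operator norm. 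In all these cases $AB$ is compact because $A$ or $B$ is.

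\emph{Main case $1\le r_1,r_2<\infty$.} First, $AB$ is compact. Second, Horn's inequality gives, for every $N$,
\[
\prod_{j\le N}s_j(AB)\le\prod_{j\le N}s_j(A)\,s_j(B).
\]
This is proved by applying the operator-norm bound to the antisymmetric tensor powers $\Lambda^N(AB)=\Lambda^N A\,\Lambda^N B$, using $\|\Lambda^N T\|_{\mathrm{op}}=\prod_{j\le N}s_j(T)$. Third, log-majorization implies weak majorization of every increasing function of $\log$ that is convex (Weyl's lemma). Applying this with $t\mapsto e^{rt}$ yields
\[
\sum_{j\le N}s_j(AB)^r\le\sum_{j\le N}\bigl(s_j(A)s_j(B)\bigr)^r .
\]
Finally, scalar H\"older with exponents $r_1/r$ and $r_2/r$ (which are conjugate, since $\frac{r}{r_1}+\frac{r}{r_2}=1$) bounds the right-hand side by $\|A\|_{\Sp^{r_1}}^r\|B\|_{\Sp^{r_2}}^r$. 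Letting $N\to\infty$ completes the argument.

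\emph{Main obstacle.} The hard step is the Horn/Weyl majorization, which I would cite from standard references (Simon, \emph{Trace Ideals}; Gohberg--Krein) rather than reprove. As the paper does, I would state the proposition as well known. An alternative route is complex interpolation between the endpoint cases, but that requires more setup than the majorization argument.
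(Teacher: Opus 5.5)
Your argument is correct and takes essentially the same route as the paper, which gives no proof of its own and simply cites Simon's \emph{Trace Ideals}, Thm.~2.8; as far as I recall, that reference also derives H\"older from Horn's inequality, Weyl log-majorization and the scalar H\"older inequality. One harmless slip: in the case $r=r_1=r_2=\infty$, neither factor need be compact under the paper's convention $\Sp^\infty=B(L^2(\T))$, but no compactness is required there, since only submultiplicativity of the operator norm is claimed.
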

\noindent{See \cite[Thm.~2.8]{Simon2005}.}

\begin{lemma}[Kato--Ponce fractional Leibniz rule at $(2,\infty)$]\label{lem:KP}
Let $s>0$. For all $f,g\in H^s(\T)\cap L^\infty(\T),$
we have
\begin{equation}\label{eq:KP}
  \|\langle D\rangle^s(fg)\|_{L^2}
  \le C\bigl(
  \|f\|_{H^s}\,\|g\|_{L^\infty}
  + \|f\|_{L^\infty}\,\|g\|_{H^s}
  \bigr)
\end{equation}
and
\begin{equation}\label{eq:KP_commutator}
  \|[\langle D\rangle^s, V_f]\,g\|_{L^2}
  \le C\bigl(
  \|f\|_{H^s}\,\|g\|_{L^\infty}
  + \|f\|_{L^\infty}\,\|g\|_{H^s}
  \bigr).
\end{equation}
\end{lemma}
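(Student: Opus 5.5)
We prove both estimates of Lemma~\ref{lem:KP} by working on the Fourier side on $\T$ and using a Littlewood--Paley (Bony paraproduct) decomposition. Write $f=\sum_j P_j f$, $g=\sum_k P_k g$, where $P_j$ are smooth dyadic frequency projections adapted to $|n|\sim 2^j$. On $\T$ these are given by Fourier multipliers with symbols $\varphi(2^{-j}n)$, and their kernels are periodizations of Schwartz functions. Hence $\|P_j\|_{L^\infty\to L^\infty}\le C$ uniformly in $j$, and the usual Bernstein inequalities hold.

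Split the product as
\[
fg=\sum_{j}S_{j-3}f\,P_jg+\sum_j P_jf\,S_{j-3}g+\sum_{|j-k|\le 2}P_jf\,P_kg,
\]
with $S_{j}=\sum_{l\le j}P_l$.

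\textbf{Low--high term.} The term $S_{j-3}f\,P_jg$ is frequency-localized near $|n|\sim 2^j$. By almost orthogonality,
\[
\Bigl\|\langle D\rangle^s\sum_j S_{j-3}f\,P_jg\Bigr\|_{L^2}^2\le C\sum_j 2^{2js}\|S_{j-3}f\|_{L^\infty}^2\|P_jg\|_{L^2}^2\le C\|f\|_{L^\infty}^2\|g\|_{H^s}^2 .
\]

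\textbf{High--low term.} Symmetrically, $\sum_j P_jf\,S_{j-3}g$ is bounded by $C\|f\|_{H^s}\|g\|_{L^\infty}$.

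\textbf{High--high term.} Each product $P_jf\,P_kg$ with $|j-k|\le 2$ has frequencies $\lesssim 2^j$. Applying $P_l$ to the sum therefore only sees $j\ge l-3$, and
\[
2^{ls}\|P_l(\cdot)\|_{L^2}\le C\sum_{j\ge l-3}2^{(l-j)s}\,2^{js}\|P_jf\|_{L^2}\,\|g\|_{L^\infty}.
\]
Since $s>0$, the kernel $2^{(l-j)s}\mathbf 1_{j\ge l-3}$ is $\ell^1$-summable, and Young's inequality on $\ell^2$ gives the bound $C\|f\|_{H^s}\|g\|_{L^\infty}$. This is the only place where $s>0$ is needed, and it is the main point to check carefully. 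The zero mode (low frequencies, $j\le 0$) is treated trivially, since $\|fg\|_{L^2}\le\|f\|_{L^\infty}\|g\|_{L^2}$. This proves \eqref{eq:KP}.

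\textbf{Commutator estimate.} For \eqref{eq:KP_commutator} we write
\[
[\langle D\rangle^s,V_f]g=\langle D\rangle^s(fg)-f\,\langle D\rangle^s g .
\]
The first term is controlled by \eqref{eq:KP}. For the second, $\|f\langle D\rangle^s g\|_{L^2}\le\|f\|_{L^\infty}\|g\|_{H^s}$. Because the right-hand side of \eqref{eq:KP_commutator} contains the full $\|f\|_{L^\infty}\|g\|_{H^s}$ term (and no gain is claimed), the triangle inequality closes the estimate immediately.

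The periodic setting causes no extra difficulty beyond the uniform $L^\infty$-boundedness of the $P_j$. Alternatively, one may transfer the standard $\R$ result of Kato--Ponce / Grafakos--Oh via periodization, but the direct paraproduct argument above is self-contained.
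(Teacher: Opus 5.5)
Your proof is correct. For \eqref{eq:KP_commutator} it coincides with the paper's argument: write $[\langle D\rangle^s,V_f]g=\langle D\rangle^s(fg)-f\langle D\rangle^s g$, apply \eqref{eq:KP} to the first term and the $(2,\infty)$ H\"older bound to the second.

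For \eqref{eq:KP} you take a genuinely different route. The paper does not prove it; it simply cites \cite[Thm.~1]{GrafakosOh2014}. You instead give a self-contained Bony paraproduct proof directly on $\T$.

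The citation buys brevity and a far more general statement: $L^r$ targets, arbitrary H\"older pairs, and the $L^\infty$ endpoint. However, Grafakos--Oh is formulated on $\R^n$, and the paper does not comment on the passage to the torus.

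Your argument works natively on $\T$. It needs only the uniform $L^\infty$ bound for the periodized Littlewood--Paley multipliers, which you correctly justify via Poisson summation and periodization. Because the target space is $L^2$, you can replace the square-function and Coifman--Meyer machinery by Plancherel and almost-orthogonality of the annular low--high and high--low pieces. It also makes transparent that $s>0$ is used only in summing the high--high interactions, through Young's inequality with the $\ell^1$ kernel $2^{(l-j)s}\mathbf 1_{j\ge l-3}$.

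The one point you leave implicit is the routine justification that the paraproduct expansion of $fg$ converges, for example in $L^1$ via $S_Nf\,S_Ng\to fg$. This does not affect the argument.
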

\noindent{See \cite[Thm.~1]{GrafakosOh2014} for~\eqref{eq:KP}}. Equation \eqref{eq:KP_commutator} is an elementary corollary of (i), by writing
\[
  [\langle D\rangle^s,V_f]g\;=\;\langle D\rangle^s(fg)\;-\;f\,\langle D\rangle^s g
\]
and bounding 
\[
\begin{aligned}
  \|[\langle D\rangle^s,V_f]g\|_{L^2}
  &\;\le\;\|\langle D\rangle^s(fg)\|_{L^2}\;+\;\|f\,\langle D\rangle^s g\|_{L^2} && \\
  &\;\le\;C\bigl(\|f\|_{H^s}\|g\|_{L^\infty}+\|f\|_{L^\infty}\|g\|_{H^s}\bigr)\;+\;\|f\|_{L^\infty}\|g\|_{H^s}, && 
\end{aligned}
\]
where $\|\langle D\rangle^s(fg)\|_{L^2}$ was bounded using (i) and $\|f\,\langle D\rangle^s g\|_{L^2}$ by  $(2,\infty)$ H\"older.

\begin{lemma}[Lower semicontinuity of $\|\nabla\,\cdot\,\|_{L^2}$ under strong $L^2$ convergence]\label{lem:LSC}
Let $\{u_n\}_{n\ge 1}\subset H^1(\T)$ satisfy
\begin{enumerate}
\item[\textup{(i)}] $u_n\to u$ in $L^2(\T);$
\item[\textup{(ii)}] $\sup_{n\ge 1}\|\nabla u_n\|_{L^2}<\infty.$
\end{enumerate}
Then $u\in H^1(\T)$ and
\begin{equation}\label{eq:LSC}
  \|\nabla u\|_{L^2}^2 \;\le\; \liminf_{n\to\infty}\|\nabla u_n\|_{L^2}^2.
\end{equation}
\end{lemma}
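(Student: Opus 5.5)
The plan is to pass to weak limits in $H^1(\T)$ and then use weak lower semicontinuity of the norm. By (i) and (ii), $\|u_n\|_{H^1}^2=\|u_n\|_{L^2}^2+\|\nabla u_n\|_{L^2}^2$ is uniformly bounded, since $\|u_n\|_{L^2}$ converges and is therefore bounded.

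First, pass to a subsequence $u_{n_k}$ along which $\|\nabla u_{n_k}\|_{L^2}^2\to\liminf_n\|\nabla u_n\|_{L^2}^2$. This subsequence is still bounded in the Hilbert space $H^1(\T)$. By Banach--Alaoglu (or weak sequential compactness of bounded sets in a Hilbert space), a further subsequence converges weakly in $H^1$ to some $v\in H^1(\T)$.

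Next, identify $v=u$. For any test function $\varphi\in C^\infty(\T)$, the map $w\mapsto\int_\T w\bar\varphi$ is a bounded functional on $H^1$. Hence $\int u_{n_k}\bar\varphi\to\int v\bar\varphi$. By (i) the same integrals converge to $\int u\bar\varphi$, so $u=v$ almost everywhere and $u\in H^1(\T)$.

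Then weak convergence in $H^1$ gives $\nabla u_{n_k}\rightharpoonup\nabla u$ weakly in $L^2$. To see this, note that for $g\in L^2$ the functional $w\mapsto\int\nabla w\,\bar g$ is bounded on $H^1$. Weak lower semicontinuity of the $L^2$ norm then yields
\[
\|\nabla u\|_{L^2}\le\liminf_k\|\nabla u_{n_k}\|_{L^2}.
\]
The right-hand side equals $\bigl(\liminf_n\|\nabla u_n\|_{L^2}^2\bigr)^{1/2}$ by the choice of subsequence, and squaring gives \eqref{eq:LSC}.

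Alternatively, the whole argument can be done on the Fourier side. The Fourier coefficients converge, $\hat u_n(m)\to\hat u(m)$, by (i). Fatou's lemma for sums then gives
\[
\sum_m m^2|\hat u(m)|^2\le\liminf_n\sum_m m^2|\hat u_n(m)|^2,
\]
which is both the membership $u\in H^1$ and \eqref{eq:LSC} at once. This route is perhaps cleanest, and I would present it as the main proof. There is no real obstacle; the only point needing care is choosing the subsequence that realizes the $\liminf$ before extracting weak limits, which the Fourier/Fatou route avoids entirely.
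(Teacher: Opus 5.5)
Your proposal is correct, and both routes work. The paper gives no proof beyond citing \cite[Prop.~3.5(iii)]{Brezis2011}, which is weak lower semicontinuity of the norm. That citation is exactly your first route: boundedness in $H^1(\T)$, extraction of a weakly convergent subsequence, identification of the weak limit with the strong $L^2$ limit $u$, and lower semicontinuity along $\nabla u_{n_k}\rightharpoonup\nabla u$ in $L^2$.

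Your preferred Fourier/Fatou route is genuinely different. It rests on two facts: $|\hat u_n(m)-\hat u(m)|\le\|u_n-u\|_{L^2}$, and $u\in H^1(\T)$ with $\|\nabla u\|_{L^2}^2=\sum_m m^2|\hat u(m)|^2$ whenever that sum is finite. Fatou's lemma on $\Z$ then gives both membership and the inequality in one line, with no compactness and no subsequence bookkeeping. It also yields a slightly stronger statement. Hypothesis (ii) can be relaxed to $\liminf_n\|\nabla u_n\|_{L^2}<\infty$. Hypothesis (i) can be weakened to convergence of each Fourier coefficient, for instance weak $L^2$ or distributional convergence.

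What the weak-compactness route buys in exchange is portability: it does not need $\nabla$ to be diagonalized by an explicit basis. It is also the template the paper implicitly reuses in Lemma~\ref{lem:finite_system}, Step~4, where lower semicontinuity of $\|\cdot\|_{H^{s+2}\Sp^1}$ under $\Sp^\infty$ convergence is invoked as the Schatten-class analog via weak-$*$ lower semicontinuity of the $\Sp^1$ norm.
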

\noindent See, e.g., \cite[Prop.~3.5(iii)]{Brezis2011}.

\begin{lemma}[1D Gagliardo--Nirenberg on $\T$]\label{lem:GN1d}
For every $u\in H^1(\T),$
\begin{equation}\label{eq:Linfty_bound}
  \|u\|_{L^\infty(\T)}^{\,2}
  \;\le\; \frac{1}{|\T|}\,\|u\|_{L^2}^2 + 2\,\|u\|_{L^2}\,\|\nabla u\|_{L^2},
\end{equation}
and consequently
\begin{equation}\label{eq:GN1d}
  \|u\|_{L^4(\T)}^{\,4}
  \;\le\; \frac{1}{|\T|}\,\|u\|_{L^2}^4 + 2\,\|u\|_{L^2}^3\,\|\nabla u\|_{L^2}.
\end{equation}
\end{lemma}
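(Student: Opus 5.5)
The plan is to prove \eqref{eq:Linfty_bound} by a one-dimensional fundamental-theorem-of-calculus argument applied to $|u|^2$, and then obtain \eqref{eq:GN1d} from it by H\"older. By density of smooth functions in $H^1(\T)$, and continuity of both sides of \eqref{eq:Linfty_bound} in the $H^1$ norm (with $H^1\hookrightarrow C(\T)$ in $d=1$), it suffices to take $u\in C^\infty(\T)$. Write $|\T|=2\pi$.

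For \eqref{eq:Linfty_bound}, set $g=|u|^2$, which is absolutely continuous with
\[
g'=2\,\mathrm{Re}(\bar u\,u'), \qquad |g'|\le 2|u|\,|u'|.
\]
The mean value of $g$ is $\|u\|_{L^2}^2/|\T|$, so there is a point $x_0$ with $g(x_0)\le \|u\|_{L^2}^2/|\T|$. For any $x\in\T$, integrate along an arc from $x_0$ to $x$:
\[
g(x)=g(x_0)+\int_{x_0}^x g' \le \frac{1}{|\T|}\|u\|_{L^2}^2+\int_\T 2|u||u'| \le \frac{1}{|\T|}\|u\|_{L^2}^2+2\|u\|_{L^2}\|\nabla u\|_{L^2},
\]
where the last step is Cauchy--Schwarz. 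Taking the supremum over $x$ gives \eqref{eq:Linfty_bound}.

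For \eqref{eq:GN1d}, we have $\|u\|_{L^4}^4\le\|u\|_{L^\infty}^2\|u\|_{L^2}^2$. Multiplying \eqref{eq:Linfty_bound} by $\|u\|_{L^2}^2$ yields exactly
\[
\|u\|_{L^4}^4\le \frac{1}{|\T|}\|u\|_{L^2}^4+2\|u\|_{L^2}^3\|\nabla u\|_{L^2}.
\]

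The only point needing care is the sharp constant $2$ rather than $4$. This is why the integration runs along a single arc from $x_0$ to $x$, bounded by the full integral of $|g'|$, with no doubling; it is elementary. Otherwise the argument is routine, the density step being standard.
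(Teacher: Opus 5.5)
Your proof is correct and follows the paper's argument essentially verbatim. It applies the fundamental theorem of calculus to $|u|^2$ from a point where $|u|^2$ is at most its mean (the paper takes the minimum point), bounds by $\int_\T |(|u|^2)'|$ with Cauchy--Schwarz, and then uses $\|u\|_{L^4}^4\le\|u\|_{L^\infty}^2\|u\|_{L^2}^2$. The only cosmetic difference is that you reduce to smooth $u$ by density, whereas the paper works directly with the continuous representative of $u\in H^1(\T)$ and the a.e.\ identity $(|u|^2)'=2\,\mathrm{Re}(\overline{u}\,\nabla u)$.
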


\begin{proof}
This is a standard result (see e.g.~\cite[\S 8.2]{Brezis2011}); we go briefly through the proof to fix the explicit constants.

In $d=1,$ $H^1(\T)\hookrightarrow C(\T),$ so $u$ admits a continuous representative and $|u|^2\in H^1(\T)$ with $(|u|^2)'=2\,\mathrm{Re}(\overline{u}\,\nabla u)$ a.e. Pick $x_0\in\T$ with $|u(x_0)|^2=\min_\T|u|^2\le\tfrac{1}{|\T|}\|u\|_{L^2}^2.$ For any $x\in\T,$ integrating $(|u|^2)'$  from $x_0$ to $x$ and bounding by the full integral over $\T,$
\[
  |u(x)|^2 - |u(x_0)|^2
  \;\le\; \int_\T |(|u|^2)'|\,dy
  \;=\; 2\int_\T |\mathrm{Re}(\overline{u}\,\nabla u)|\,dy
  \;\le\; 2\,\|u\|_{L^2}\,\|\nabla u\|_{L^2},
\]
by Cauchy--Schwarz in the last step. Taking the supremum in $x$ yields~\eqref{eq:Linfty_bound}; multiplying by $\|u\|_{L^2}^2$ and using $\|u\|_{L^4}^4\le\|u\|_{L^\infty}^2\,\|u\|_{L^2}^2$ gives~\eqref{eq:GN1d}.
\end{proof}

\medskip

Since the free evolution~\eqref{eq:free} is a unitary conjugation, $S(t)$ preserves every Schatten norm: $\|S(t)\gamma\|_{\Sp^r}=\|\gamma\|_{\Sp^r}$ for all $r$ and all $t$. Using this fact and the subadditivity of the norm $\|\int f dt\| \leq \int \|f\|dt,$ we have the following

\begin{lemma}[Duhamel estimates]
\label{lem:Duhamel}
For every $T>0$:
\begin{align}
  \|S(t)\gamma_0\|_{L^\infty([0,T];\,H^s\Sp^r)}
  &= \|\gamma_0\|_{H^s\Sp^r},
  \label{eq:free_iso}\\[4pt]
  \biggl\|\int_0^t S(t-\tau)F(\tau)\,d\tau
  \biggr\|_{L^\infty([0,T];\,H^s\Sp^1)}
  &\le T\,
  \|F\|_{L^\infty([0,T];\,H^s\Sp^1)}.
  \label{eq:Duh_Linfty}
\end{align}
\end{lemma}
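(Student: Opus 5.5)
The first identity \eqref{eq:free_iso} follows directly from the structure of the free evolution. By \eqref{eq:free}, $S(t)\gamma_0=e^{-ip\Delta t}\gamma_0e^{ip\Delta t}$. Since $e^{\mp ip\Delta t}$ and $\langle D\rangle^s$ are both Fourier multipliers, they commute, so
\[
\langle D\rangle^s S(t)\gamma_0\langle D\rangle^s = e^{-ip\Delta t}\bigl(\langle D\rangle^s\gamma_0\langle D\rangle^s\bigr)e^{ip\Delta t}.
\]
Unitary conjugation preserves the singular values, and hence every $\Sp^r$ norm. Therefore $\|S(t)\gamma_0\|_{H^s\Sp^r}=\|\gamma_0\|_{H^s\Sp^r}$ for each fixed $t$, and taking the supremum over $t\in[0,T]$ gives \eqref{eq:free_iso} with equality. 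The step needing care is the case where $\langle D\rangle^s\gamma_0\langle D\rangle^s$ is a priori only densely defined. I would handle this by checking the identity on finite Fourier truncations, or simply by noting that membership in $H^s\Sp^r$ is defined through that product, which the conjugation maps to itself.

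For \eqref{eq:Duh_Linfty}, I would interpret the Duhamel integral as a Bochner integral in the Banach space $H^s\Sp^1$. This requires $\tau\mapsto S(t-\tau)F(\tau)$ to be strongly measurable, which I would obtain from strong continuity of the group $S(\cdot)$ on $H^s\Sp^1$ together with measurability of $F$. The bound $\|\int f\|\le\int\|f\|$ and the isometry property then give, for each $t\in[0,T]$,
\[
\Bigl\|\int_0^t S(t-\tau)F(\tau)\,d\tau\Bigr\|_{H^s\Sp^1}\le\int_0^t\|F(\tau)\|_{H^s\Sp^1}\,d\tau\le t\,\|F\|_{L^\infty H^s\Sp^1}\le T\,\|F\|_{L^\infty H^s\Sp^1}.
\]
Taking the supremum over $t$ concludes the proof.

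The main obstacle, though a mild one, is strong continuity of $t\mapsto S(t)$ on $\Sp^1$. I would prove it by approximating with finite-rank operators built from finitely many Fourier modes, on which continuity is explicit, and then using the uniform isometry bound together with an $\varepsilon/3$ argument.
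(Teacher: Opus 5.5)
Your proposal is correct and follows the paper's route. The paper likewise obtains \eqref{eq:free_iso} from the fact that $S(t)$ commutes with $\langle D\rangle^s$ and is a unitary conjugation, hence preserves every $\Sp^r$ norm, and it obtains \eqref{eq:Duh_Linfty} from that isometry together with $\|\int f\,dt\|\le\int\|f\|\,dt$; your Bochner-measurability and strong-continuity details (via finite Fourier-mode approximation) are left implicit in the paper but are sound.
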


\begin{remark}[Role of non-negativity of the operator]\label{rem:positivity}
The non-negativity assumption $\gamma\ge 0$ is used structurally in several places in this paper, including:
\begin{itemize}
\item the identification
$\|\gamma\|_{H^1\Sp^1} = \tr\gamma + \tr(-\Delta\gamma)$, used in the finiteness of the energy (Lemma~\ref{lem:E_finite}) and in the $H^1\Sp^1$ a priori bound (Lemma~\ref{lem:finite_system}(b));
\item the orbital-level Cauchy--Schwarz and Hoffmann--Ostenhof chain inside the proof of Lemma~\ref{lem:finite_NLS} (Step~4), which closes the focusing $H^1$ a priori bound at the smooth approximant level and is then lifted to $\gamma$ by Lemma~\ref{lem:finite_system}(b).
\end{itemize}
Each of these relies on the spectral identity
\begin{equation}\label{eq:spectral_identity}
  \|\gamma\|_{H^s\Sp^1} = \sum_j \lambda_j\,\|\phi_j\|_{H^s}^2,\qquad \gamma\ge 0.
\end{equation}
Indeed, decomposing $\gamma=\sum_j\lambda_j|\phi_j\rangle\langle\phi_j|$ with $\lambda_j\ge 0$ and $\{\phi_j\}$ orthonormal gives $\langle D\rangle^s\gamma\langle D\rangle^s = \sum_j\lambda_j|\langle D\rangle^s\phi_j\rangle\langle\langle D\rangle^s\phi_j|\ge 0$; its $\Sp^1$-norm therefore equals its trace, $\sum_j\lambda_j\|\langle D\rangle^s\phi_j\|_{L^2}^2 = \sum_j\lambda_j\|\phi_j\|_{H^s}^2$. (No cyclicity of the trace involving the unbounded $\langle D\rangle^s$ is invoked: non-negativity carries the identification, and the assumption $\gamma\in H^s\Sp^1$ forces $\phi_j\in H^s$ whenever $\lambda_j>0$, making the series convergent.) Specializing to $s=1$ and expanding $\|\langle D\rangle\phi_j\|_{L^2}^2 = \|\phi_j\|_{L^2}^2 + \|\nabla\phi_j\|_{L^2}^2$ gives the equivalent form
\begin{equation}\label{eq:H1S1_split}
  \|\gamma\|_{H^1\Sp^1} = \tr\gamma + \tr(-\Delta\gamma),
\end{equation}
with $\tr(-\Delta\gamma):=\sum_j\lambda_j\|\nabla\phi_j\|_{L^2}^2 = \|\nabla\sqrt{\gamma}\|_{\Sp^2}^2 \in [0,\infty]$ (cf.\ also~\cite
{LewinSabin2015}). We adopt the shorthands
\begin{equation}\label{eq:Kcal_def}
\begin{aligned}
  \mathcal{M}(\gamma)\;&:=\;\|\gamma\|_{\Sp^1}\;=\;\sum_j\lambda_j,\\
  \mathcal{K}(\gamma)\;&:=\;\tr(-\Delta\gamma)\;=\;\sum_j\lambda_j\|\nabla\phi_j\|_{L^2}^2,
\end{aligned}
\end{equation}
for the \emph{mass} and the \emph{kinetic energy} of $\gamma\ge 0,$ respectively; for a solution $\gamma(t)$ of~\eqref{eq:Alber_op} we write $\mathcal{M}(t):=\mathcal{M}(\gamma(t))$ and $\mathcal{K}(t):=\mathcal{K}(\gamma(t)).$ The same notation is reused for finite-rank approximants. 

At the density level, the spectral representation becomes
\begin{equation}\label{eq:rho_spectral}
  \rho_\gamma(x) \;=\; K_\gamma(x,x) \;=\; \sum_j\lambda_j\,|\phi_j(x)|^2 \;\ge\; 0,
  \qquad x\in\T.
\end{equation}
In our setting of $\gamma \in H^s\Sp^1,$ the series \eqref{eq:rho_spectral} converges  uniformly on $\T.$

We also note the fact that self-adjointness and non-negativity propagate along the flow of \eqref{eq:Alber_op} (cf. Lemma~\ref{lem:finite_system}(a)), so the assumption $\gamma_0=\gamma_0^*\ge 0$ implies $\gamma(t)=\gamma(t)^*\ge 0$ for all $t$ in the interval of existence.
\end{remark}

\subsection{Density and nonlinear estimates}\label{sec:nonlin}

We collect here the key density and nonlinearity estimates, connecting the operator $\gamma$ to its spatial density $\rho_\gamma(x)=K_\gamma(x,x)$.

\begin{lemma}[Bessel-kernel estimate]
\label{lem:Bessel}
Let $s>1/2$ and $d=1$. For every $\gamma\in H^s\Sp^1(\T)$,
\begin{equation}\label{eq:Bessel}
  \|\rho_\gamma\|_{L^\infty(\T)}
  \le B_s\,\|\gamma\|_{H^s\Sp^1},
\end{equation}
where $B_s = (2\pi)^{-1}\sum_{n\in\Z} \langle n\rangle^{-2s}<\infty$.
\end{lemma}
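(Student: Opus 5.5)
The plan is to factor out the Bessel potentials and reduce everything to a pointwise Cauchy--Schwarz bound on $\langle D\rangle^{-s}$ applied to unit vectors. Set $A:=\langle D\rangle^s\gamma\langle D\rangle^s\in\Sp^1$, so that $\|A\|_{\Sp^1}=\|\gamma\|_{H^s\Sp^1}$ and $\gamma=\langle D\rangle^{-s}A\langle D\rangle^{-s}$. No self-adjointness or non-negativity of $\gamma$ is needed for this argument.

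First, write the singular value decomposition $A=\sum_j\sigma_j|u_j\rangle\langle v_j|$. Here $\sigma_j\ge0$, $\sum_j\sigma_j=\|A\|_{\Sp^1}$, and $\{u_j\},\{v_j\}$ are orthonormal. This gives the kernel representation
\[
K_\gamma(x,y)=\sum_j\sigma_j\,(\langle D\rangle^{-s}u_j)(x)\,\overline{(\langle D\rangle^{-s}v_j)(y)} .
\]

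Second, establish the pointwise estimate that drives everything. For any $u\in L^2(\T)$ and any $x\in\T$, the Fourier expansion gives
\[
(\langle D\rangle^{-s}u)(x)=\frac{1}{\sqrt{2\pi}}\sum_n\langle n\rangle^{-s}\hat u(n)e^{inx}.
\]
Cauchy--Schwarz in $n$ then yields
\[
|(\langle D\rangle^{-s}u)(x)|\le\Big((2\pi)^{-1}\sum_n\langle n\rangle^{-2s}\Big)^{1/2}\|u\|_{L^2}=B_s^{1/2}\|u\|_{L^2}.
\]
The series is finite precisely because $2s>1$. Equivalently, $\|\langle D\rangle^{-s}\delta_x\|_{L^2}^2=B_s$, uniformly in $x$. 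The Fourier series also converges absolutely and uniformly, so each $\langle D\rangle^{-s}u_j$ is continuous.

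Third, apply this bound to each term of the kernel representation. Each term is then bounded by $\sigma_jB_s$ uniformly in $(x,y)$, so the series converges absolutely and uniformly on $\T^2$ to a continuous function. That function agrees with the $L^2(\T^2)$ kernel of $\gamma$ almost everywhere. Consequently the diagonal $\rho_\gamma(x)=K_\gamma(x,x)$ is well defined pointwise as a continuous function, and
\[
|\rho_\gamma(x)|\le\sum_j\sigma_j B_s=B_s\|A\|_{\Sp^1}=B_s\|\gamma\|_{H^s\Sp^1}.
\]
Taking the supremum over $x$ gives \eqref{eq:Bessel}.

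The main obstacle is the rigorous meaning of the diagonal $K_\gamma(x,x)$: an $L^2(\T^2)$ kernel has no well-defined restriction to the diagonal. The uniform convergence in the third step resolves this by supplying the continuous representative, which is also what justifies the uniform convergence of \eqref{eq:rho_spectral} asserted in Remark~\ref{rem:positivity}.
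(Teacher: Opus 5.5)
Your argument is correct and is essentially the paper's proof. Both factor $\gamma=\langle D\rangle^{-s}\tilde\gamma\langle D\rangle^{-s}$ with $\tilde\gamma=\langle D\rangle^{s}\gamma\langle D\rangle^{s}\in\Sp^1$, and both rest on $\|\langle D\rangle^{-s}\delta_{x}\|_{L^2}^2=B_s$ uniformly in $x$.

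The only difference is bookkeeping. The paper bounds the quadratic form $\langle G_{x_0},\tilde\gamma G_{x_0}\rangle$ by $B_s\|\tilde\gamma\|_{\mathrm{op}}\le B_s\|\tilde\gamma\|_{\Sp^1}$, taking the identity $\langle\delta_{x_0},\gamma\delta_{x_0}\rangle=\rho_\gamma(x_0)$ for granted. Your singular-value expansion applies the same pointwise bound term by term, and as a bonus it produces the continuous representative of $K_\gamma$, which makes the diagonal restriction rigorous.
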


\begin{proof}
Set $\tilde\gamma:=\langle D\rangle^s\gamma\langle D\rangle^s\in\Sp^1$, so that $\gamma = \langle D\rangle^{-s}\tilde\gamma\langle D\rangle^{-s}$ and
\[
  \|\tilde\gamma\|_{\mathrm{op}}\le\|\tilde\gamma\|_{\Sp^1}=\|\gamma\|_{H^s\Sp^1}.
\]

Moreover, we will use the following {\em claim}:
\begin{equation}\label{eq:Bessel_kernel_norm}
  \|\langle D\rangle^{-s}\delta_{x_0}\|_{L^2}^2 = \|\delta_{x_0}\|_{H^{-s}}^2 = B_s
\end{equation}
which will be proved below.

Denote $G_{x_0}:=\langle D\rangle^{-s}\delta_{x_0}$ the {\em Bessel potential kernel}, which according to the claim is an $L^2$ function. Then
\begin{align*}
  \langle G_{x_0} ,\tilde\gamma G_{x_0} \rangle
  &= \langle\langle D\rangle^{-s}\delta_{x_0} ,\,\tilde\gamma \langle D\rangle^{-s}\delta_{x_0} \rangle \\
  &= \langle\delta_{x_0} ,\,\langle D\rangle^{-s}\tilde\gamma \langle D\rangle^{-s}\delta_{x_0} \rangle \\
  &= \langle\delta_{x_0} ,\, \gamma\delta_{x_0} \rangle = \rho_\gamma(x_0).
\end{align*}
Thus,
\[
  |\rho_\gamma(x_0)|
  \le\|\tilde\gamma\|_{\mathrm{op}}\,\|\langle D\rangle^{-s}\delta_{x_0}\|_{L^2}^2
  \le B_s\,\|\gamma\|_{H^s\Sp^1} \qquad \forall x_0 \in \T.
\]

\noindent {\em Proof of the claim \eqref{eq:Bessel_kernel_norm}:}
Since $\widehat{\delta_{x_0}}(n)=(2\pi)^{-1/2}e^{-inx_0}$,
\[
  \|\delta_{x_0}\|_{H^{-s}(\T)}^2
  = \sum_{n\in\Z}\langle n\rangle^{-2s}|\widehat{\delta_{x_0}}(n)|^2
  = \frac{1}{2\pi}\sum_{n\in\Z}\langle n\rangle^{-2s}
  = B_s,
\]
which is finite (since $s>1/2$).
Moreover, since $\langle D\rangle^{-s}$ is the Fourier multiplier with symbol $\langle n\rangle^{-s},$  the distribution $G_{x_0}=\langle D\rangle^{-s}\delta_{x_0}$ has Fourier coefficients
\[
  \widehat{G_{x_0}}(n)
  = \frac{1}{\sqrt{2\pi}}\,\langle n\rangle^{-s}\,e^{-inx_0}.
\]
By Parseval,
\[
\| G_{x_0}\|_{L^2(\T)}^2 =  \frac{1}{2\pi}\sum_{n\in\Z}\langle n\rangle^{-2s} = B_s.
\]

\end{proof}

\begin{lemma}[Finiteness of the energy on $H^1\Sp^1$]\label{lem:E_finite}
Let $\gamma\in H^1\Sp^1(\T)$ be self-adjoint and non-negative. Then the energy $E(\gamma)$ defined in~\eqref{eq:E_def} is finite, with
\begin{equation}\label{eq:E_finite_bound}
  |E(\gamma)|\;\le\;|p|\,\|\gamma\|_{H^1\Sp^1}\;+\;\frac{|q|}{2}\,B_1\,\|\gamma\|_{\Sp^1}\|\gamma\|_{H^1\Sp^1},
\end{equation}
where $B_1$ is the Bessel-kernel constant of Lemma~\ref{lem:Bessel} at $s=1.$
\end{lemma}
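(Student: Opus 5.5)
The plan is to bound the two terms of $E(\gamma)$ separately, using the spectral decomposition $\gamma=\sum_j\lambda_j|\phi_j\rangle\langle\phi_j|$ with $\lambda_j\ge 0$ and orthonormal $\{\phi_j\}$, which is available because $\gamma$ is self-adjoint, non-negative and compact.

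\emph{Kinetic term.} By Remark~\ref{rem:positivity}, $\tr(\Delta\gamma)=-\mathcal{K}(\gamma)=-\sum_j\lambda_j\|\nabla\phi_j\|_{L^2}^2$. The identity~\eqref{eq:H1S1_split} gives $\|\gamma\|_{H^1\Sp^1}=\tr\gamma+\mathcal{K}(\gamma)$. Since $\tr\gamma\ge 0$, we get $0\le\mathcal{K}(\gamma)\le\|\gamma\|_{H^1\Sp^1}<\infty$, and hence
\[
|p\,\tr(\Delta\gamma)|\le|p|\,\|\gamma\|_{H^1\Sp^1}.
\]
This is also where the definition of $\tr(\Delta\gamma)$ is fixed: we take it to mean $-\sum_j\lambda_j\|\nabla\phi_j\|^2$. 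This avoids any cyclicity argument with the unbounded $\Delta$, exactly as in Remark~\ref{rem:positivity}.

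\emph{Potential term.} We use $\|\rho_\gamma\|_{L^2}^2\le\|\rho_\gamma\|_{L^1}\|\rho_\gamma\|_{L^\infty}$.
\begin{itemize}
\item By~\eqref{eq:rho_spectral}, $\rho_\gamma=\sum_j\lambda_j|\phi_j|^2\ge 0$. The series converges uniformly (in particular in $L^1$), so integrating term by term gives $\|\rho_\gamma\|_{L^1}=\sum_j\lambda_j=\|\gamma\|_{\Sp^1}$.
\item Lemma~\ref{lem:Bessel} with $s=1>1/2$ gives $\|\rho_\gamma\|_{L^\infty}\le B_1\|\gamma\|_{H^1\Sp^1}$.
\end{itemize}
Combining these,
\[
\tfrac{|q|}{2}\|\rho_\gamma\|_{L^2}^2\le\tfrac{|q|}{2}B_1\|\gamma\|_{\Sp^1}\|\gamma\|_{H^1\Sp^1}.
\]
The triangle inequality then yields~\eqref{eq:E_finite_bound}.

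The only delicate point is justifying the identification of $\rho_\gamma(x)=K_\gamma(x,x)$ with the pointwise series, and the term-by-term integration. I would handle this through the uniform convergence asserted in Remark~\ref{rem:positivity}. Alternatively, since $\sum_j\lambda_j\|\phi_j\|_{H^1}^2<\infty$ and $\|\phi_j\|_{L^\infty}^2\lesssim\|\phi_j\|_{H^1}^2$, the kernel series converges absolutely and uniformly on $\T^2$, so the diagonal restriction commutes with the sum.
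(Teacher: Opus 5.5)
Your proposal is correct and follows the paper's proof essentially verbatim. The kinetic term is bounded via the spectral identity \eqref{eq:H1S1_split}, and the potential term via $\|\rho_\gamma\|_{L^2}^2\le\|\rho_\gamma\|_{L^1}\|\rho_\gamma\|_{L^\infty}$, using $\|\rho_\gamma\|_{L^1}=\tr\gamma$ and Lemma~\ref{lem:Bessel} at $s=1$; your Weierstrass-type justification of the diagonal restriction is a useful detail the paper only asserts in Remark~\ref{rem:positivity}.
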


\begin{proof}
For self-adjoint non-negative $\gamma,$ the spectral identity~\eqref{eq:H1S1_split} gives $\tr(-\Delta\gamma)\le\|\gamma\|_{H^1\Sp^1},$ hence $|p\,\tr(\Delta\gamma)|\le|p|\,\|\gamma\|_{H^1\Sp^1}.$ For the potential term, $\rho_\gamma\ge 0$ and $\int_\T\rho_\gamma=\tr\gamma=\|\gamma\|_{\Sp^1},$ so
\[
  \|\rho_\gamma\|_{L^2}^2
  \le\|\rho_\gamma\|_{L^1}\|\rho_\gamma\|_{L^\infty}
  \le B_1\,\|\gamma\|_{\Sp^1}\|\gamma\|_{H^1\Sp^1},
\]
using Lemma~\ref{lem:Bessel} at $s=1.$
\end{proof}

\begin{lemma}[$H^s$ trace estimate]\label{lem:trace}
Let $s>1/2$. For any $\gamma\in H^s\Sp^1(\T)$,
\begin{equation}\label{eq:trace_Hs}
  \|\rho_\gamma\|_{H^s(\T)}
  \le C\,\|\gamma\|_{H^s\Sp^1}.
\end{equation}
\end{lemma}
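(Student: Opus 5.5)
We will prove the estimate on the Fourier side, expressing the Fourier coefficients of $\rho_\gamma$ through the matrix of $\gamma$ in the exponential basis $e_n(x)=(2\pi)^{-1/2}e^{inx}$. Write $\gamma_{mn}=\langle e_m,\gamma e_n\rangle$, so that $K_\gamma(x,y)=\sum_{m,n}\gamma_{mn}e_m(x)\overline{e_n(y)}$. Setting $x=y$ gives
\[
\widehat{\rho_\gamma}(k)=\frac{1}{\sqrt{2\pi}}\sum_{m-n=k}\gamma_{mn}.
\]
Convergence is guaranteed by the bound below, or alternatively by approximating with finite-rank $\gamma$ and then passing to the limit, using Lemma~\ref{lem:Bessel}, which gives $L^\infty$ control of $\rho_\gamma$. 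Put $\tilde\gamma=\langle D\rangle^s\gamma\langle D\rangle^s\in\Sp^1$, whose matrix entries are $\tilde\gamma_{mn}=\langle m\rangle^s\langle n\rangle^s\gamma_{mn}$. Then
\[
\langle k\rangle^s\widehat{\rho_\gamma}(k)=\frac{1}{\sqrt{2\pi}}\sum_{m-n=k}\frac{\langle m-n\rangle^s}{\langle m\rangle^s\langle n\rangle^s}\,\tilde\gamma_{mn}.
\]

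The central weight estimate is $\langle m-n\rangle^s\le 2^{s}\bigl(\langle m\rangle^s+\langle n\rangle^s\bigr)$, which comes from Peetre's inequality or from $\langle a+b\rangle\le\langle a\rangle+\langle b\rangle$. It bounds the multiplier by $C\bigl(\langle n\rangle^{-s}+\langle m\rangle^{-s}\bigr)$, so the sum splits into two symmetric pieces. It is enough to treat
\[
A_k:=\sum_{n}\langle n\rangle^{-s}\,|\tilde\gamma_{n+k,n}|.
\]
By Cauchy--Schwarz in $n$, we have $A_k^2\le\bigl(\sum_n\langle n\rangle^{-2s}\bigr)\sum_n|\tilde\gamma_{n+k,n}|^2=2\pi B_s\sum_n|\tilde\gamma_{n+k,n}|^2$. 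Summing over $k$ then gives
\[
\sum_k A_k^2\le 2\pi B_s\sum_{m,n}|\tilde\gamma_{mn}|^2=2\pi B_s\|\tilde\gamma\|_{\Sp^2}^2\le 2\pi B_s\|\gamma\|_{H^s\Sp^1}^2.
\]
This is where $s>1/2$ enters. The second piece is handled identically, with the roles of $m$ and $n$ swapped. Combining the two gives $\|\rho_\gamma\|_{H^s}\le C\,\|\gamma\|_{H^s\Sp^1}$ with $C\lesssim 2^s\sqrt{B_s}$, and in fact the bound only needs the Hilbert--Schmidt norm of $\tilde\gamma$.

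The main step needing care is justifying the diagonal restriction formula for $\widehat{\rho_\gamma}$. The density $\rho_\gamma(x)=K_\gamma(x,x)$ is a trace on a measure-zero set, so it must be defined properly. We plan to do this through the identity $\rho_\gamma(x_0)=\langle G_{x_0},\tilde\gamma G_{x_0}\rangle$ from the proof of Lemma~\ref{lem:Bessel}. Expanding $G_{x_0}$ in Fourier modes reproduces the double series above. The estimate just proved shows that this series converges absolutely in $\ell^2_k$ with weight $\langle k\rangle^s$, which legitimizes the exchange of sums. A density argument over finite-rank truncations $P_N\gamma P_N$, with $P_N$ the Fourier projection, completes the justification: these truncations converge to $\gamma$ in $H^s\Sp^1$, and their densities converge uniformly by Lemma~\ref{lem:Bessel}.
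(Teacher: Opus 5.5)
Your proof is correct, but it takes a different route from the paper.

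\textbf{Your route.} You work directly with the matrix of $\gamma$ in the exponential basis. The weight inequality $\langle m-n\rangle^s\le 2^s(\langle m\rangle^s+\langle n\rangle^s)$, followed by Cauchy--Schwarz along each diagonal $m-n=k$, reduces the estimate to the summability of $\sum_n\langle n\rangle^{-2s}$. That is exactly where $s>1/2$ enters.

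\textbf{The paper's route.} The paper argues by duality. The algebra property of $H^s(\T)$ dualizes to $\|fh\|_{H^{-s}}\le C\|f\|_{H^{-s}}\|h\|_{H^s}$, hence $\|\langle D\rangle^{-s}V_f\langle D\rangle^{-s}\|_{\mathrm{op}}\le C\|f\|_{H^{-s}}$. Then $\int f\rho_\gamma=\tr(V_f\gamma)=\tr\bigl((\langle D\rangle^{-s}V_f\langle D\rangle^{-s})\,\tilde\gamma\bigr)$ is controlled by $\Sp^\infty$--$\Sp^1$ duality, and density of $C^\infty(\T)$ in $H^{-s}(\T)$ finishes the argument.

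\textbf{What each buys.} The paper's version is basis-free and short once the algebra property is granted. It does, however, lean on the identity $\int f\rho_\gamma=\tr(V_f\gamma)$ and on cyclicity of the trace with the unbounded factors $\langle D\rangle^{\pm s}$; both need a small approximation argument that is left implicit. It also spends the full trace norm of $\tilde\gamma$.

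Your version is elementary and gives an explicit constant, $2^{s+1}\sqrt{B_s}$. It also proves something stronger: only $\|\langle D\rangle^s\gamma\langle D\rangle^s\|_{\Sp^2}$ is needed. In duality language, you are pairing $\Sp^2$ against $\Sp^2$, using that $\langle D\rangle^{-s}V_f\langle D\rangle^{-s}$ is itself Hilbert--Schmidt with norm $\lesssim\|f\|_{H^{-s}}$. Like the paper's argument, yours needs neither self-adjointness nor non-negativity, so it applies to differences $\gamma_1-\gamma_2$.

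\textbf{A small point on your justification.} The weighted $\ell^2_k$ bound on the diagonal sums does not by itself license exchanging the order of summation. What does is absolute summability:
\[
\sum_{m,n}|\gamma_{mn}|\le\Bigl(\sum_{m,n}\langle m\rangle^{-2s}\langle n\rangle^{-2s}\Bigr)^{1/2}\|\tilde\gamma\|_{\Sp^2}=2\pi B_s\|\tilde\gamma\|_{\Sp^2}.
\]
This makes the Fourier series of $K_\gamma$ converge uniformly on $\T^2$, so restricting to $x=y$ and regrouping by $k=m-n$ is immediate.

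Your fallback through $P_N\gamma P_N$ is also valid. The formula is exact for trigonometric-polynomial kernels, the sequence is Cauchy in $H^s$ by your estimate, and the limit is identified through the uniform convergence from Lemma~\ref{lem:Bessel}, which holds for non-self-adjoint operators. So there is no gap, only a shorter path available.
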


\begin{proof}
A simple proof based on the spectral decomposition of $\gamma$ is possible if $\gamma$ is self-adjoint and non-negative. However, we also want to use this estimate for arbitrary differences  of non-negative self-adjoint operators $\gamma_1-\gamma_2.$ In order to accommodate that, we give a proof by duality that requires neither self-adjointness nor non-negativity. The argument uses only the Schatten--H\"older inequality and the fact that $H^s(\T)$ is an algebra for $s>1/2.$

\medskip

Since $s>1/2$, $H^s(\T)$ is a Banach algebra under pointwise multiplication: $\|hg\|_{H^s}\le C\|h\|_{H^s}\|g\|_{H^s}$. By duality, for $f\in H^{-s}(\T)$ and $h\in H^s(\T)$,
\begin{equation}\label{eq:trace_mult}
  \|fh\|_{H^{-s}}
  \le C\|f\|_{H^{-s}}\|h\|_{H^s}.
\end{equation}

\medskip

Moreover, for $g\in L^2(\T),$ equation \eqref{eq:trace_mult} gives
\begin{align*}
  \|\langle D\rangle^{-s}V_f\langle D\rangle^{-s}g\|_{L^2}
  &= \|V_f(\langle D\rangle^{-s}g)\|_{H^{-s}} \\
  &\le C\|f\|_{H^{-s}}\|\langle D\rangle^{-s}g\|_{H^s}
  = C\|f\|_{H^{-s}}\|g\|_{L^2},
\end{align*}
that is,
\begin{equation}\label{eq:trace_op_bound}
  \|\langle D\rangle^{-s}V_f\langle D\rangle^{-s}\|_{\mathrm{op}}
  \le C\|f\|_{H^{-s}}.
\end{equation}

\medskip

Now, for $f\in C^\infty(\T)$ the multiplication operator $V_f$ is bounded and $V_f\gamma\in\Sp^1$; pairing the density $\rho_\gamma(x)=K_\gamma(x,x)$ against $f$ gives
\[
  \int_\T f(x)\,\rho_\gamma(x)\,dx
  = \int_\T f(x)\,K_\gamma(x,x)\,dx
  = \tr(V_f\,\gamma).
\]
By cyclicity of the trace,
\[
\begin{aligned}
  \tr(V_f\gamma)
  &= \tr\bigl((\langle D\rangle^{-s}V_f\langle D\rangle^{-s})\,
  (\langle D\rangle^s\gamma\langle D\rangle^s)\bigr) \implies \\
    |\tr(V_f\gamma)|\;&\le\;\|\langle D\rangle^{-s}V_f\langle D\rangle^{-s}\|_{\mathrm{op}}\,\|\langle D\rangle^s\gamma\langle D\rangle^s\|_{\Sp^1}
\end{aligned}
\]
and the observation $\|\langle D\rangle^s\gamma\langle D\rangle^s\|_{\Sp^1}=\|\gamma\|_{H^s\Sp^1}$
combined with~\eqref{eq:trace_op_bound} yields
\begin{align*}
\Bigl|\int_\T f\,\rho_\gamma\,dx\Bigr|&=  \tr(V_f\gamma)
  \le \|\langle D\rangle^{-s}V_f\langle D\rangle^{-s}\|_{\mathrm{op}}\,
  \|\gamma\|_{H^s\Sp^1}\\
  &\le C\|f\|_{H^{-s}}\|\gamma\|_{H^s\Sp^1}
  \qquad\text{for all }f\in C^\infty(\T).
\end{align*}
Since $C^\infty(\T)$ is dense in $H^{-s}(\T)$, the result follows.
\end{proof}

\begin{lemma}[Conjugation bound, $\Sp^\infty$]\label{lem:conj}
For $s>1/2$, $d=1$, and $f\in H^s(\T)$,
\begin{equation}\label{eq:conj}
  \|\langle D\rangle^s V_f
  \langle D\rangle^{-s}\|_{\mathrm{op}}
  \le C\|f\|_{H^s}.
\end{equation}
\end{lemma}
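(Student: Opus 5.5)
The bound \eqref{eq:conj} says that $\langle D\rangle^s V_f\langle D\rangle^{-s}$ is bounded on $L^2(\T)$. Equivalently, $V_f$ is bounded on $H^s(\T)$ with norm at most $C\|f\|_{H^s}$. The plan is to reduce to this multiplier statement and then invoke the algebra property of $H^s(\T)$ for $s>1/2$, the same property already used in the proof of Lemma~\ref{lem:trace}.

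\textbf{Step 1: reduction.} Fix $g\in L^2(\T)$ and set $h:=\langle D\rangle^{-s}g\in H^s(\T)$, so that $\|h\|_{H^s}=\|g\|_{L^2}$. Then
\[
  \|\langle D\rangle^s V_f\langle D\rangle^{-s}g\|_{L^2}=\|fh\|_{H^s}.
\]
It therefore suffices to show $\|fh\|_{H^s}\le C\|f\|_{H^s}\|h\|_{H^s}$.

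\textbf{Step 2: the algebra estimate.} I would take this from the Kato--Ponce rule \eqref{eq:KP} of Lemma~\ref{lem:KP}, together with the Sobolev embedding of Remark~\ref{rem:sobolev}:
\[
  \|fh\|_{H^s}\le C\bigl(\|f\|_{H^s}\|h\|_{L^\infty}+\|f\|_{L^\infty}\|h\|_{H^s}\bigr)\le C'\|f\|_{H^s}\|h\|_{H^s},
\]
using $\|\cdot\|_{L^\infty}\le C\|\cdot\|_{H^s}$, which holds because $s>1/2$ and $d=1$. This bounds the operator on all of $L^2$, since $h\in H^s\subset L^\infty$ and both norms on the right of \eqref{eq:KP} are finite. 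Taking the supremum over $\|g\|_{L^2}\le1$ gives \eqref{eq:conj}.

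\textbf{Possible obstacle.} The only delicate point is the range of $s$ in the cited Kato--Ponce estimate on $\T$. If one prefers to avoid that citation, there is an elementary Fourier-side alternative. Write $\widehat{fh}(n)=(2\pi)^{-1/2}\sum_m\hat f(n-m)\hat h(m)$ and use $\langle n\rangle^s\le C_s(\langle n-m\rangle^s+\langle m\rangle^s)$. Young's inequality $\ell^2*\ell^1\to\ell^2$ then applies to each piece, with $\|\hat f\|_{\ell^1}\le(2\pi B_s)^{1/2}\|f\|_{H^s}$ by Cauchy--Schwarz against $\langle n\rangle^{-s}$, which is in $\ell^2$ precisely when $s>1/2$ (the constant $B_s$ of Lemma~\ref{lem:Bessel}). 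I expect no real difficulty beyond bookkeeping the constant.
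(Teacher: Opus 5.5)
Your proof is correct and is essentially the paper's argument. The paper writes $\langle D\rangle^s V_f\langle D\rangle^{-s}=V_f+[\langle D\rangle^s,V_f]\langle D\rangle^{-s}$ and applies the commutator bound \eqref{eq:KP_commutator} together with Sobolev embedding; that commutator bound is itself derived from the product estimate \eqref{eq:KP}, so your direct reduction to $\|fh\|_{H^s}\le C\|f\|_{H^s}\|h\|_{H^s}$ is the same proof without the detour. Your Fourier-side alternative is also valid and avoids the Kato--Ponce citation on $\T$ entirely, but note that the piece carrying $\langle n-m\rangle^s$ needs $\|\hat h\|_{\ell^1}\le(2\pi B_s)^{1/2}\|h\|_{H^s}$ as well as the $\|\hat f\|_{\ell^1}$ bound; this follows by the same Cauchy--Schwarz.
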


\begin{proof}
Write $\langle D\rangle^s V_f \langle D\rangle^{-s} = V_f + [\langle D\rangle^s,V_f] \langle D\rangle^{-s}$. The first term: $\|V_f\|_{\mathrm{op}}=\|f\|_{L^\infty} \le C\|f\|_{H^s}.$ For the commutator, let $h\in L^2(\T)$ and set $g=\langle D\rangle^{-s}h.$ Inequality~\eqref{eq:KP_commutator} gives
\[
  \|[\langle D\rangle^s,V_f]g\|_{L^2}
  \;\le\; C\bigl(\|f\|_{H^s}\|g\|_{L^\infty}+\|f\|_{L^\infty}\|g\|_{H^s}\bigr).
\]
Using $\|f\|_{L^\infty}\le C\|f\|_{H^s}$ (Sobolev) and $\|g\|_{H^s}=\|h\|_{L^2},$
\[
  \|[\langle D\rangle^s,V_f]g\|_{L^2}
  \;\le\; C\|f\|_{H^s}\bigl(\|g\|_{L^\infty}+\|h\|_{L^2}\bigr).
\]
Since $g=\langle D\rangle^{-s}h,$ Sobolev embedding gives 
\[
\|g\|_{L^\infty}\le C\|g\|_{H^{1/2+\varepsilon}} = C\|h\|_{H^{1/2+\varepsilon-s}}\le C\|h\|_{L^2}
\]
 for $\varepsilon$ small enough that $1/2+\varepsilon-s\le 0.$ Hence
\[
  \|[\langle D\rangle^s,V_f]\,\langle D\rangle^{-s}h\|_{L^2}
  \;\le\; C\|f\|_{H^s}\|h\|_{L^2}.
\]
\end{proof}

Now we are in a position to prove the following

\begin{proposition}[Bilinear bound]
\label{prop:bilinear}
Let $s>1/2$, $d=1$. For $\gamma_1,\gamma_2\in H^s\Sp^1(\T)$,
\begin{equation}\label{eq:bilinear}
  \|[V_{\rho_{\gamma_1}},\gamma_2]\|_{H^s\Sp^1}
  \le C\,\|\gamma_1\|_{H^s\Sp^1}
  \,\|\gamma_2\|_{H^s\Sp^1}.
\end{equation}
In particular, the nonlinearity $\mathcal{N}(\gamma)=[V_{\rho_\gamma},\gamma]$ satisfies
\begin{equation}\label{eq:nonlin_bound}
  \|\mathcal{N}(\gamma)\|_{H^s\Sp^1}
  \le C\,\|\gamma\|_{H^s\Sp^1}^2.
\end{equation}
\end{proposition}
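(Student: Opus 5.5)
We write $f:=\rho_{\gamma_1}$ and first reduce the statement to an estimate for a single commutator with a multiplication operator. By Lemma~\ref{lem:trace}, $f\in H^s(\T)$ with $\|f\|_{H^s}\le C\|\gamma_1\|_{H^s\Sp^1}$. It therefore suffices to prove the linear estimate
\[
  \|[V_f,\gamma_2]\|_{H^s\Sp^1}\le C\|f\|_{H^s}\|\gamma_2\|_{H^s\Sp^1},
\]
and then combine the two bounds. Since
\[
  \|[V_f,\gamma_2]\|_{H^s\Sp^1}\le \|V_f\gamma_2\|_{H^s\Sp^1}+\|\gamma_2V_f\|_{H^s\Sp^1},
\]
we can treat the two products separately.

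For the first product, set $\tilde\gamma_2:=\langle D\rangle^s\gamma_2\langle D\rangle^s\in\Sp^1$. Then
\[
  \langle D\rangle^s V_f\gamma_2\langle D\rangle^s=\bigl(\langle D\rangle^sV_f\langle D\rangle^{-s}\bigr)\,\tilde\gamma_2 .
\]
The Schatten--H\"older inequality (Proposition~\ref{prop:schatten_holder}) with $(r_1,r_2)=(\infty,1)$ bounds the $\Sp^1$ norm of this by $\|\langle D\rangle^sV_f\langle D\rangle^{-s}\|_{\mathrm{op}}\|\tilde\gamma_2\|_{\Sp^1}$. Lemma~\ref{lem:conj} then gives at most $C\|f\|_{H^s}\|\gamma_2\|_{H^s\Sp^1}$.

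For the second product we write
\[
  \langle D\rangle^s\gamma_2V_f\langle D\rangle^s=\tilde\gamma_2\,\bigl(\langle D\rangle^{-s}V_f\langle D\rangle^{s}\bigr).
\]
Here $\langle D\rangle^{-s}V_f\langle D\rangle^s=(\langle D\rangle^sV_{\bar f}\langle D\rangle^{-s})^*$, interpreted on a dense set and extended by continuity. Its operator norm is therefore also bounded by $C\|\bar f\|_{H^s}=C\|f\|_{H^s}$ through Lemma~\ref{lem:conj}. Since $\rho_{\gamma_1}$ need not be real when $\gamma_1$ is not self-adjoint, we use the conjugate $\bar f$ here; its norm equals $\|f\|_{H^s}$. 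Schatten--H\"older again gives the bound. The special case $\gamma_1=\gamma_2=\gamma$ yields \eqref{eq:nonlin_bound}.

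The main obstacle is that $\langle D\rangle^{\pm s}$ is unbounded, so the factorizations above must be justified on domains. We would proceed as follows.
\begin{itemize}
\item Verify the identities first for finite-rank $\gamma_2$ with trigonometric-polynomial range and cokernel. There every composition makes sense and $\gamma_2$ maps into $H^s$.
\item Extend by density to all of $H^s\Sp^1$. Finite Fourier truncations $P_N\gamma_2P_N$ converge in $H^s\Sp^1$, and $\Sp^1$ is closed.
\item Justify the adjoint identity for the second product in the same way: check it on trigonometric polynomials, which form a core, before passing to the limit.
\end{itemize}
We expect no real analytic difficulty beyond Lemma~\ref{lem:conj}, which already contains the Kato--Ponce input.
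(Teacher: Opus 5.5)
Your proposal is correct and follows the paper's proof essentially step for step. You split the commutator by the triangle inequality, factor $\langle D\rangle^s V_\rho\gamma_2\langle D\rangle^s=(\langle D\rangle^sV_\rho\langle D\rangle^{-s})(\langle D\rangle^s\gamma_2\langle D\rangle^s)$, apply Schatten--H\"older together with Lemma~\ref{lem:conj} and Lemma~\ref{lem:trace}, and reduce the right product to the left one by taking adjoints; your use of $\bar f$ (since $\rho_{\gamma_1}$ need not be real) and the finite-rank density argument for the unbounded factorizations only make explicit what the paper leaves implicit.
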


\begin{proof}

Denote for brevity $\rho=\rho_{\gamma_1}.$ By the triangle inequality,
\[
  \|[V_{\rho},\gamma_2]\|_{H^s\Sp^1}
  \le \|V_{\rho}\gamma_2\|_{H^s\Sp^1}
  + \|\gamma_2 V_{\rho}\|_{H^s\Sp^1}.
\]
We detail the left-multiplication term in full; the right-multiplication term is reduced to it by taking adjoints.

By definition,
\begin{align}\label{eq:bil_def}
  \|V_\rho\gamma_2\|_{H^s\Sp^1}
  &= \|\langle D\rangle^s V_\rho\gamma_2
  \langle D\rangle^s\|_{\Sp^1}\notag\\
  &= \|(\langle D\rangle^s V_\rho
  \langle D\rangle^{-s})
  (\langle D\rangle^s\gamma_2
  \langle D\rangle^s)\|_{\Sp^1}.
\end{align}
By the Schatten H\"older inequality $\Sp^\infty\cdot\Sp^1\to\Sp^1$:
\begin{equation}\label{eq:bil_holder}
  \|(\langle D\rangle^s V_\rho
  \langle D\rangle^{-s})
  (\langle D\rangle^s\gamma_2
  \langle D\rangle^s)\|_{\Sp^1}
  \le \|\langle D\rangle^s V_\rho
  \langle D\rangle^{-s}\|_{\mathrm{op}}
  \,\|\gamma_2\|_{H^s\Sp^1}.
\end{equation}
By virtue of  Lemma~\ref{lem:conj},
\begin{equation}\label{eq:bil_conj}
  \|\langle D\rangle^s V_\rho
  \langle D\rangle^{-s}\|_{\mathrm{op}}
  \le C\|\rho\|_{H^s}.
\end{equation}
Moreover, by virtue of Lemma~\ref{lem:trace},
\begin{equation}\label{eq:bil_trace}
  \|\rho\|_{H^s}
  \le C\|\gamma_1\|_{H^s\Sp^1}.
\end{equation}
Combining~\eqref{eq:bil_def}--\eqref{eq:bil_trace} bounds $\|V_\rho\gamma_2\|_{H^s\Sp^1}$ by $C\|\gamma_1\|_{H^s\Sp^1}\|\gamma_2\|_{H^s\Sp^1}.$

\end{proof}

\section{Local well-posedness and Fourier--Galerkin approximation}
\label{sec:galerkin}

This section collects the machinery used to manipulate $H^1\Sp^1$ solutions of~\eqref{eq:Alber_op} and their approximating sequences in what follows. Subsection~\ref{subsec:lwp} sets up the mild formulation and the short-time $H^1\Sp^1$ local well-posedness (Proposition~\ref{prop:local}). Subsections~\ref{subsec:galerkin_static}--\ref{subsec:galerkin_dynamic} develop the Fourier--Galerkin approximation: the static truncation $\gamma^{(N)}:=P_N\gamma P_N$ (Proposition~\ref{prop:galerkin}), the smooth orbital engine (Lemma~\ref{lem:finite_NLS}), and the bridge lemma (Lemma~\ref{lem:finite_system}) which lifts key qualitative properties and bounds from the smooth approximant to any $H^1\Sp^1$ solution.

\subsection{Mild form and local well-posedness}
\label{subsec:lwp}

We write~\eqref{eq:Alber_op} in mild form
\begin{equation}\label{eq:Duhamel}
  \gamma(t) = S(t)\gamma_0
  - iq\int_0^t S(t-\tau)\,
  \mathcal{N}(\gamma(\tau))\,d\tau,
\end{equation}
with $\mathcal{N}(\gamma)=[V_{\rho_\gamma},\gamma]$.

\medskip

Let $T>0$ and define the Banach space
\begin{equation}\label{eq:XT}
  X_T = L^\infty([0,T];\,H^1\Sp^1),
\end{equation}
with norm $\|\gamma\|_{X_T} = \|\gamma\|_{L^\infty_t H^1\Sp^1}$. For $\gamma_0\in H^1\Sp^1$ and $\gamma\in X_T,$ define the Duhamel map $\Phi=\Phi_{\gamma_0}$ by
\begin{equation}\label{eq:Duhamel_map}
  \Phi(\gamma)(t) \;:=\; S(t)\gamma_0
  - iq\int_0^t S(t-\tau)\,\mathcal{N}(\gamma(\tau))\,d\tau,
  \qquad t\in[0,T].
\end{equation}

\begin{proposition}[Local well-posedness on $H^1\Sp^1$]
\label{prop:local}
For every $\delta>0$ there exists $T>0$ such that if $\|\gamma_0\|_{H^1\Sp^1}\le\delta$, then the Duhamel map $\Phi_{\gamma_0}$ is a contraction on $\bigl\{\gamma\in X_T : \|\gamma\|_{X_T}\le 2\delta\bigr\}$.

Moreover, the solution depends Lipschitz-continuously on the initial datum: if $\|\gamma_0^{(1)}\|_{H^1\Sp^1}, \|\gamma_0^{(2)}\|_{H^1\Sp^1}\le\delta$ and $\gamma^{(1)},\gamma^{(2)}$ are the corresponding solutions, then
\begin{equation}\label{eq:lwp_stab}
  \|\gamma^{(1)}-\gamma^{(2)}\|_{X_T}
  \le 2\,\|\gamma_0^{(1)}-\gamma_0^{(2)}\|_{H^1\Sp^1}.
\end{equation}
\end{proposition}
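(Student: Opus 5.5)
The plan is a standard Banach fixed-point argument in $X_T$, with the nonlinear estimates supplied by Proposition~\ref{prop:bilinear} at $s=1$ and the Duhamel bounds of Lemma~\ref{lem:Duhamel}. Write $C_0$ for the constant in \eqref{eq:bilinear} at $s=1$ and let $B_{2\delta}:=\{\gamma\in X_T:\|\gamma\|_{X_T}\le 2\delta\}$.

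\emph{Self-map.} For $\gamma\in B_{2\delta}$, the isometry \eqref{eq:free_iso} bounds the free term by $\|\gamma_0\|_{H^1\Sp^1}\le\delta$. The estimates \eqref{eq:Duh_Linfty} and \eqref{eq:nonlin_bound} bound the Duhamel term by $|q|\,T\,C_0(2\delta)^2$. Hence $\|\Phi(\gamma)\|_{X_T}\le\delta+4|q|C_0T\delta^2$. This is at most $2\delta$ as soon as $4|q|C_0T\delta\le1$.

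\emph{Contraction.} The key step is the algebraic decomposition
\[
  \mathcal{N}(\gamma)-\mathcal{N}(\tilde\gamma)
  =[V_{\rho_{\gamma-\tilde\gamma}},\gamma]+[V_{\rho_{\tilde\gamma}},\gamma-\tilde\gamma].
\]
It uses linearity of $\gamma\mapsto\rho_\gamma$ and of the commutator. Proposition~\ref{prop:bilinear} does not require self-adjointness or positivity (it rests on Lemma~\ref{lem:trace}, proved by duality), so it applies to the difference $\gamma-\tilde\gamma$. This gives
\[
  \|\mathcal{N}(\gamma)-\mathcal{N}(\tilde\gamma)\|_{H^1\Sp^1}\le 4C_0\delta\,\|\gamma-\tilde\gamma\|_{H^1\Sp^1}
\]
pointwise in $\tau$. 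By \eqref{eq:Duh_Linfty}, $\|\Phi(\gamma)-\Phi(\tilde\gamma)\|_{X_T}\le 4|q|C_0T\delta\,\|\gamma-\tilde\gamma\|_{X_T}$. Choosing $T:=(8|q|C_0\delta)^{-1}$ makes the Lipschitz constant $1/2$. It also gives the self-map property, with $\delta+\tfrac12\delta\le 2\delta$.

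\emph{Measurability.} One should check that the Duhamel integral is well defined as a Bochner integral in $H^1\Sp^1$. Since $S(t)$ is strongly continuous on $\Sp^1$ and commutes with $\langle D\rangle$, it is strongly continuous on $H^1\Sp^1$. If one prefers, one can run the argument in $C([0,T];H^1\Sp^1)\subset X_T$ to avoid these issues. I expect this to be the only mildly delicate point; the rest is bookkeeping.

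\emph{Lipschitz dependence.} Let $\gamma^{(1)},\gamma^{(2)}$ be the fixed points with data $\gamma_0^{(1)},\gamma_0^{(2)}$. Both lie in $B_{2\delta}$ with the same $T$. Subtracting the two Duhamel formulas and using the same difference estimate gives
\[
  \|\gamma^{(1)}-\gamma^{(2)}\|_{X_T}\le\|\gamma_0^{(1)}-\gamma_0^{(2)}\|_{H^1\Sp^1}+\tfrac12\|\gamma^{(1)}-\gamma^{(2)}\|_{X_T}.
\]
Absorbing the last term into the left-hand side yields \eqref{eq:lwp_stab}.
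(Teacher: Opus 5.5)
Your proposal is correct and follows essentially the same route as the paper: isometry of $S(t)$ plus the bilinear bound for the self-map, the same polarization $\mathcal{N}(\gamma_1)-\mathcal{N}(\gamma_2)=[V_{\rho_{\gamma_1-\gamma_2}},\gamma_1]+[V_{\rho_{\gamma_2}},\gamma_1-\gamma_2]$ for the contraction with $T$ of order $1/(8C\delta)$, and absorbing the $\tfrac12$-Lipschitz term to get the factor $2$ in the continuous dependence. The differences are cosmetic: you track $|q|$ explicitly where the paper absorbs it into $C$, and you add a remark on Bochner measurability that the paper leaves implicit.
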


\begin{proof}
Write $\delta=\|\gamma_0\|_{H^1\Sp^1}$.

\medskip

\noindent\textbf{Step 1: Linear estimate.} By isometry of the free evolution,
\begin{equation}\label{eq:linear_XT}
  \|S(t)\gamma_0\|_{X_T}
  = \|\gamma_0\|_{H^1\Sp^1}
  = \delta.
\end{equation}

\medskip

\noindent\textbf{Step 2: Nonlinear estimate.} By Proposition~\ref{prop:bilinear} and the Duhamel estimate~\eqref{eq:Duh_Linfty}:
\begin{equation}\label{eq:Duh_S1}
  \biggl\|\int_0^t S(t-\tau)
  \mathcal{N}(\gamma)\,d\tau
  \biggr\|_{X_T}
  \le C\,T\,\|\gamma\|_{X_T}^2.
\end{equation}

\noindent\textbf{Step 3: Closing.} On the ball $\|\gamma\|_{X_T}\le 2\delta$:
\[
  \|\Phi(\gamma)\|_{X_T}
  \le \delta + C\,T\,(2\delta)^2
  = \delta\,(1 + 4C\,T\,\delta).
\]
Choosing $T$ small enough that $4C\,T\,\delta\le 1/2$, i.e.\
\begin{equation}\label{eq:delta_T}
  T \le \frac{1}{8C\,\delta},
\end{equation}
gives $\|\Phi(\gamma)\|_{X_T}\le\tfrac{3}{2}\delta<2\delta$, so $\Phi$ maps the ball $\|\gamma\|_{X_T}\le 2\delta$ to itself.

For the contraction, let $\gamma_1,\gamma_2$ lie in the ball. Using the linearity $\rho_{\gamma_1}-\rho_{\gamma_2}=\rho_{\gamma_1-\gamma_2}$, polarize:
\[
  \mathcal{N}(\gamma_1)-\mathcal{N}(\gamma_2)
  = [V_{\rho_{\gamma_1-\gamma_2}},\gamma_1]
  + [V_{\rho_{\gamma_2}},\gamma_1-\gamma_2].
\]
Applying Proposition~\ref{prop:bilinear} to each term gives
\[
  \|\mathcal{N}(\gamma_1)-\mathcal{N}(\gamma_2)\|_{H^1\Sp^1}
  \le C\bigl(\|\gamma_1\|_{H^1\Sp^1}+\|\gamma_2\|_{H^1\Sp^1}\bigr)
  \,\|\gamma_1-\gamma_2\|_{H^1\Sp^1},
\]
and the same Duhamel estimate as in Step~2 yields
\[
  \|\Phi(\gamma_1)-\Phi(\gamma_2)\|_{X_T}
  \le 4C\,T\,\delta\,
  \|\gamma_1-\gamma_2\|_{X_T}
  \le \tfrac{1}{2}\,
  \|\gamma_1-\gamma_2\|_{X_T},
\]
so $\Phi$ is a contraction.

\medskip

\noindent\textbf{Step 4: Continuous dependence.} Let $\gamma^{(j)}=\Phi_{\gamma_0^{(j)}}(\gamma^{(j)})$ for $j=1,2,$ with $\Phi_{\gamma_0}$ as in~\eqref{eq:Duhamel_map}. Subtracting and using the linear estimate together with the Lipschitz bound on $\mathcal{N}$ from Step~3,
\[
  \|\gamma^{(1)}-\gamma^{(2)}\|_{X_T}
  \le \|\gamma_0^{(1)}-\gamma_0^{(2)}\|_{H^1\Sp^1}
  + \tfrac12
    \|\gamma^{(1)}-\gamma^{(2)}\|_{X_T},
\]
yielding~\eqref{eq:lwp_stab}.
\end{proof}

\begin{remark}\label{rem:delta_T}
The proof gives the explicit lifespan $T(\delta)=1/(8C\delta)$, so data of any size can be handled on a sufficiently short time interval. Equivalently, writing $\delta(T):=1/(8CT),$ the contraction holds on $[0,T]$ whenever $\|\gamma_0\|_{H^1\Sp^1}\le\delta(T).$
\end{remark}

\subsection{Static Fourier truncation}
\label{subsec:galerkin_static}

\begin{proposition}[Fourier--Galerkin approximation in $H^s\Sp^1$]
\label{prop:galerkin}
Let $s\ge 0.$ For $N\in\mathbb{N},$ let $P_N$ denote the orthogonal projection in $L^2(\T)$ onto $\mathrm{span}\{e^{inx}:|n|\le N\},$ and define the Fourier--Galerkin truncation
\begin{equation}\label{eq:galerkin_def}
  \gamma^{(N)}\;:=\;P_N\gamma P_N.
\end{equation}
Then for every $\gamma\in H^s\Sp^1(\T)$:
\begin{enumerate}
\item[\textup{(i)}] \emph{(Regularization.)} $\gamma^{(N)}$ is finite-rank with range and corange contained in $P_NL^2(\T);$ in particular $\gamma^{(N)}\in H^\sigma\Sp^1$ for every $\sigma\ge 0.$
\item[\textup{(ii)}] \emph{(Inheritance.)} If $\gamma=\gamma^{*},$ then $\gamma^{(N)}=(\gamma^{(N)})^{*}.$ If in addition $\gamma\ge 0,$ then $\gamma^{(N)}\ge 0.$
\item[\textup{(iii)}] \emph{(Monotone norm bound.)} $\|\gamma^{(N)}\|_{H^s\Sp^1}\le\|\gamma\|_{H^s\Sp^1}$ for every $N,$ and the map $N\mapsto\|\gamma^{(N)}\|_{H^s\Sp^1}$ is nondecreasing.
\item[\textup{(iv)}] \emph{(Strong convergence.)} $\gamma^{(N)}\to\gamma$ in $H^s\Sp^1$ as $N\to\infty.$
\item[\textup{(v)}] \emph{(Density-level consequences.)} Let $\rho^{(N)}:=\rho_{\gamma^{(N)}} \ge 0.$ Then $\rho^{(N)}\to\rho_\gamma$ in $L^1(\T),$ and if $\gamma\in H^\sigma\Sp^1$ for some $\sigma>1/2$ then additionally $\rho^{(N)}\to\rho_\gamma$ in $H^\sigma(\T).$
\item[\textup{(vi)}] \emph{(Finite spectral representation.)} If $\gamma=\gamma^{*}\ge 0,$ then $\gamma^{(N)}$ admits a finite spectral decomposition
\begin{equation}\label{eq:galerkin_spec}
  \gamma^{(N)}
  \;=\; \sum_{k=1}^{M_N}\mu_k^{(N)}\,
  |\psi_k^{(N)}\rangle\langle\psi_k^{(N)}|,
  \qquad M_N\le 2N+1,
\end{equation}
with $\mu_k^{(N)}\ge 0$ and $\{\psi_k^{(N)}\}_{k=1}^{M_N}$ orthonormal in $L^2(\T);$ each $\psi_k^{(N)}\in P_NL^2(\T)$ is a trigonometric polynomial of degree at most $N,$ and in particular $\psi_k^{(N)}\in C^\infty(\T).$
\end{enumerate}
\end{proposition}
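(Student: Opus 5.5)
The whole argument rests on one observation: $P_N$ is a Fourier multiplier, so it commutes with $\langle D\rangle^s$. Setting $\tilde\gamma:=\langle D\rangle^s\gamma\langle D\rangle^s\in\Sp^1$, we therefore have
\[
\langle D\rangle^s\gamma^{(N)}\langle D\rangle^s=P_N\tilde\gamma P_N .
\]
Every item then reduces to a statement about the trace-class operator $\tilde\gamma$ compressed by the projections $P_N$.

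\textbf{Items (i)--(iii).} For (i), $\gamma^{(N)}$ has range in the $(2N+1)$-dimensional space $P_NL^2$ and vanishes on $(P_NL^2)^\perp$. Since $\langle D\rangle^\sigma$ is bounded on $P_NL^2$ (with norm $\langle N\rangle^\sigma$), we get $\gamma^{(N)}\in H^\sigma\Sp^1$ for all $\sigma$.

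For (ii), self-adjointness follows from $(P_N\gamma P_N)^*=P_N\gamma^*P_N$. Non-negativity follows from $\langle f,P_N\gamma P_Nf\rangle=\langle P_Nf,\gamma P_Nf\rangle\ge0$.

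For the bound in (iii), the Schatten--H\"older inequality with $\|P_N\|_{\mathrm{op}}\le1$ gives $\|P_N\tilde\gamma P_N\|_{\Sp^1}\le\|\tilde\gamma\|_{\Sp^1}$. Monotonicity follows from the same inequality: for $M\ge N$ we have $P_N=P_NP_M$, so $P_N\tilde\gamma P_N=P_N(P_M\tilde\gamma P_M)P_N$, and hence $\|\gamma^{(N)}\|_{H^s\Sp^1}\le\|\gamma^{(M)}\|_{H^s\Sp^1}$.

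\textbf{Item (iv).} We need $P_N\tilde\gamma P_N\to\tilde\gamma$ in $\Sp^1$. I plan to first treat finite-rank $F=\sum_j|a_j\rangle\langle b_j|$. Here $P_NFP_N-F=\sum_j\bigl(|P_Na_j\rangle\langle P_Nb_j|-|a_j\rangle\langle b_j|\bigr)$, which tends to $0$ in $\Sp^1$ because $\||a\rangle\langle b|\|_{\Sp^1}=\|a\|\|b\|$ and $P_N\to I$ strongly. For general $\tilde\gamma$, finite-rank operators are dense in $\Sp^1$, and the maps $A\mapsto P_NAP_N$ are uniformly bounded on $\Sp^1$ with norm at most $1$. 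A standard $\varepsilon/3$ argument therefore concludes.

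\textbf{Item (v).} The density map is linear, so $\rho^{(N)}-\rho_\gamma=\rho_{\gamma^{(N)}-\gamma}$. The $H^\sigma$ statement follows directly from Lemma~\ref{lem:trace} combined with (iv) at regularity $\sigma$. For the $L^1$ statement at $s=0$, I would use the duality argument in the proof of Lemma~\ref{lem:trace} with $s=0$. Pairing against $f\in C^\infty$ gives $\bigl|\int f\rho_A\bigr|=|\tr(V_fA)|\le\|f\|_{L^\infty}\|A\|_{\Sp^1}$, so the density map is bounded $\Sp^1\to\mathcal M(\T)$ (finite measures).

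This is the step needing most care: one must know that $\rho_A$ is an $L^1$ function with $\|\rho_A\|_{L^1}\le\|A\|_{\Sp^1}$. I would establish this via the singular value decomposition $A=\sum_k s_k|u_k\rangle\langle v_k|$, which gives $\rho_A=\sum_k s_ku_k\bar v_k$. This series converges absolutely in $L^1$, since $\|u_k\bar v_k\|_{L^1}\le1$ by Cauchy--Schwarz. Applying this to $A=\gamma^{(N)}-\gamma$ and using (iv) with $s=0$ (recall $\|\cdot\|_{H^0\Sp^1}\le\|\cdot\|_{H^s\Sp^1}$) yields the convergence. Non-negativity $\rho^{(N)}\ge0$ follows from (ii) and \eqref{eq:rho_spectral}.

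\textbf{Item (vi).} By (i)--(ii), $\gamma^{(N)}$ restricts to a self-adjoint non-negative operator on the finite-dimensional space $P_NL^2$ and vanishes on its orthogonal complement. Diagonalizing it there by the finite-dimensional spectral theorem gives eigenvectors $\psi_k^{(N)}\in P_NL^2$ with eigenvalues $\mu_k^{(N)}\ge0$, and $M_N\le 2N+1$. These eigenvectors are trigonometric polynomials of degree at most $N$, hence smooth.
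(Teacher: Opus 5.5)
Your proposal is correct and uses the same reduction as the paper: commute $P_N$ with $\langle D\rangle^s$, then work with the compression $P_N\tilde\gamma P_N$ of the trace-class operator $\tilde\gamma$, using $\|P_NAP_N\|_{\Sp^1}\le\|A\|_{\Sp^1}$ and the strong convergence $P_N\to I$. The differences are local.

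In (iv) you argue by density of finite-rank operators together with this uniform contraction bound. The paper instead splits $\gamma-\gamma^{(N)}=(I-P_N)\gamma+P_N\gamma(I-P_N)$ and applies dominated convergence to the singular value expansion. The two arguments are interchangeable.

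In (v), your formula $\rho_A=\sum_k s_k u_k\bar v_k$ proves that the density of a trace-class operator is an $L^1$ function with $\|\rho_A\|_{L^1}\le\|A\|_{\Sp^1}$. The paper's duality pairing against $L^\infty$ takes this for granted. As in the paper, the claim $\rho^{(N)}\ge 0$ tacitly requires $\gamma\ge 0$.

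In (i), your bound $\|\langle D\rangle^\sigma P_N\|_{\mathrm{op}}\le\langle N\rangle^\sigma$ gives $\|\gamma^{(N)}\|_{H^\sigma\Sp^1}\le\langle N\rangle^{2\sigma}\|\gamma\|_{\Sp^1}$, which is the correct way to handle $\sigma>s$. The paper's identity $\langle D\rangle^\sigma\gamma^{(N)}\langle D\rangle^\sigma=P_N(\langle D\rangle^\sigma\gamma\langle D\rangle^\sigma)P_N$ only yields the bound $\|\gamma\|_{H^\sigma\Sp^1}$. That bound is infinite unless $\gamma$ already lies in $H^\sigma\Sp^1$, so on this point your argument is tighter than the paper's.
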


\begin{proof}
(i) Both $P_N$ and $P_N\gamma P_N$ factor through the $(2N+1)$-dimensional space $P_NL^2,$ so $\gamma^{(N)}$ has rank $\le 2N+1.$ Since $[P_N,\langle D\rangle^\sigma]=0$ for every $\sigma\ge 0,$
\[
  \langle D\rangle^\sigma\gamma^{(N)}\langle D\rangle^\sigma
  =P_N\bigl(\langle D\rangle^\sigma\gamma
  \langle D\rangle^\sigma\bigr)P_N,
\]
hence $\gamma^{(N)}\in H^\sigma\Sp^1$ with $\|\gamma^{(N)}\|_{H^\sigma\Sp^1}\le\|\gamma\|_{H^\sigma\Sp^1}$ by the Schatten contraction $\|P_NAP_N\|_{\Sp^1}\le\|A\|_{\Sp^1}$ (cf.\ Proposition~\ref{prop:schatten_holder} with $r_1=r_2=\infty$ and $\|P_N\|_{\mathrm{op}}=1$).

(ii) $(\gamma^{(N)})^{*}=P_N\gamma^{*}P_N=\gamma^{(N)},$ and $\langle\varphi,\gamma^{(N)}\varphi\rangle=\langle P_N\varphi,\gamma\,P_N\varphi\rangle\ge 0$ for every $\varphi\in L^2.$

(iii) The norm bound is the estimate of (i) at $\sigma=s;$ monotonicity follows from $P_{N+1}P_N=P_NP_{N+1}=P_N,$ which yields $P_N\gamma^{(N+1)}P_N=\gamma^{(N)}$ and hence $\|\gamma^{(N)}\|_{H^s\Sp^1}\le\|\gamma^{(N+1)}\|_{H^s\Sp^1}$ by the same Schatten contraction.

(iv) Write $\gamma-\gamma^{(N)}=(I-P_N)\gamma+P_N\gamma(I-P_N).$ It suffices to show $(I-P_N)A\to 0$ and $A(I-P_N)\to 0$ in $\Sp^1$ for any $A\in\Sp^1$, applied to $A=\langle D\rangle^s\gamma\langle D\rangle^s.$ Take a singular-value decomposition $A=\sum_n s_n|u_n\rangle\langle v_n|$ with $\{u_n\},\{v_n\}$ orthonormal in $L^2$ and $\sum_n s_n=\|A\|_{\Sp^1}<\infty.$ Then
\[
  \|(I-P_N)A\|_{\Sp^1}
  \;\le\; \sum_n s_n\,\|(I-P_N)u_n\|_{L^2},
\]
and since $P_N\to I$ strongly on $L^2$ each summand vanishes pointwise in $n,$ while the bound $\|(I-P_N)u_n\|_{L^2}\le 1$ together with $\{s_n\}\in\ell^1$ supplies the dominant; dominated convergence on the $n$-sum gives $\|(I-P_N)A\|_{\Sp^1}\to 0.$ The argument for $A(I-P_N)$ is identical, swapping the roles of $\{u_n\},\{v_n\}.$ Combining, $\|\gamma-\gamma^{(N)}\|_{H^s\Sp^1}\to 0.$

(v) The kernel-to-density map $\gamma\mapsto\rho_\gamma$ is bounded $\Sp^1\to L^1(\T)$ with $\|\rho_\gamma\|_{L^1}\le\|\gamma\|_{\Sp^1}$ (dual pairing $\langle\varphi,\rho_\gamma\rangle=\tr(V_\varphi\gamma)$ with $\|V_\varphi\|_{\mathrm{op}}=\|\varphi\|_{L^\infty}$). By Lemma~\ref{lem:trace} it is also bounded $H^\sigma\Sp^1\to H^\sigma(\T)$ for every $\sigma>1/2.$ Applying these to the difference $\gamma^{(N)}-\gamma$ and using (iv) gives the two convergences.

Non-negativity follows immediately from the spectral decomposition, (vi).

(vi) $\gamma^{(N)}$ is a self-adjoint non-negative operator whose range lies in $P_NL^2\cong\C^{2N+1};$ the finite-dimensional spectral theorem yields~\eqref{eq:galerkin_spec}, and the eigenvectors $\psi_k^{(N)}\in P_NL^2$ are by construction trigonometric polynomials of degree $\le N,$ hence smooth.
\end{proof}

%
%

\subsection{Dynamics of the finite-rank approximant}
\label{subsec:galerkin_dynamic}

We now record how the static Fourier--Galerkin truncation behaves under the nonlinear flow. The construction proceeds in two steps. First, Lemma~\ref{lem:finite_NLS} solves the orbital NLS system on $\T$ with finite-rank smooth initial data, producing globally smooth orbitals and the associated finite-rank operator $\gamma$ self-adjoint and non-negative by construction. Second, Lemma~\ref{lem:finite_system} applies this to the Galerkin-truncated initial datum $\gamma_0^{(N)}=P_N\gamma_0 P_N$ and proves convergence to the $H^1\Sp^1$ solution of Proposition~\ref{prop:local}. Any such solution is shown to inherit  important qualitative properties (self-adjointness, positivity, energy bounds etc).

\begin{lemma}[Smooth orbital NLS system on $\T$]
\label{lem:finite_NLS}
Let $M\in\mathbb{N},$ $s\ge 0,$ let $(\mu_k)_{k=1}^M\subset[0,\infty),$ and let the functions $(\psi_k^0)_{k=1}^M\subset C^\infty(\T)$ be orthonormal in $L^2(\T).$ Then the orbital system
\begin{equation}\label{eq:Alber_system_finite_NLS}
\begin{aligned}
  i\partial_t\psi_k
  &=p\Delta\psi_k+qV_\rho\,\psi_k,\\
  \rho(x,t)
  &=\sum_{l=1}^{M}\mu_l\,|\psi_l(x,t)|^2,
\end{aligned}
\qquad k=1,\dots,M,
\end{equation}
with initial data $\psi_k(0)=\psi_k^0,$ admits a unique global solution
\[
  (\psi_k)_{k=1}^M\in C(\R;\,H^{s+2}(\T))^M\cap C^1(\R;\,H^s(\T))^M,
\]
preserving orthonormality $\langle\psi_k(t),\psi_l(t)\rangle=\delta_{kl}$ for all $k,l\in\{1,\dots,M\}$ and $t\in\R.$ Setting
\begin{equation}\label{eq:Alber_system_finite}
  \gamma(t)\;:=\;\sum_{k=1}^M\mu_k\,|\psi_k(t)\rangle\langle\psi_k(t)|,
\end{equation}
the operator satisfies $\gamma\in C(\R;H^{s+2}\Sp^1(\T))\cap C^1(\R;H^s\Sp^1(\T)),$ is self-adjoint and non-negative with $\operatorname{rank}\gamma(t)\le M,$ and solves the operator-valued Alber equation~\eqref{eq:Alber_op} with $\gamma(0)=\sum_k\mu_k|\psi_k^0\rangle\langle\psi_k^0|.$ Moreover the conservation laws
\begin{equation}\label{eq:finiteNLS_conservation}
\begin{aligned}
  &\|\gamma(t)\|_{\Sp^1}\;=\;\sum_{k=1}^M\mu_k\;=:\;\mathcal{M},
  \qquad
  \|\gamma(t)\|_{\Sp^2}^2\;=\;\sum_{k=1}^M\mu_k^{2},\\
  &E(\gamma(t))\;=\;E(\gamma(0))\;=:\;E_0
\end{aligned}
\end{equation}
hold for every $t\in\R,$ where $E$ is the energy~\eqref{eq:E_def}, and there exists $\bar Y=\bar Y(\|\gamma(0)\|_{H^1\Sp^1},p,q,|\T|)<\infty,$ monotone non-decreasing in its first argument, such that
\begin{equation}\label{eq:Ybar_finiteNLS_stmt}
  \|\gamma(t)\|_{H^1\Sp^1}\;=\;\mathcal{M}+\tr(-\Delta\gamma(t))\;\le\;\bar Y
  \qquad\text{for every }t\in\R.
\end{equation}
\end{lemma}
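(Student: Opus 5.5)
The plan is to treat \eqref{eq:Alber_system_finite_NLS} as a finite system of coupled cubic NLS equations on $\T$, with the cubic nonlinearity $\sum_l\mu_l|\psi_l|^2\psi_k$. The proof will go in four steps.

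\textbf{Step 1: local theory at high regularity.} For $\sigma>1/2$, $H^\sigma(\T)$ is an algebra. The map $(\psi_k)\mapsto(V_\rho\psi_k)$ is therefore locally Lipschitz on $(H^\sigma)^M$. A Duhamel contraction with the unitary group $e^{-ip\Delta t}$ gives a unique local solution in $C([0,T];H^\sigma)^M$, with $T$ depending only on $\sum_k\|\psi_k^0\|_{H^\sigma}$. Since the data are $C^\infty$, we take $\sigma=s+2$ and $\sigma\ge 1$. The blow-up alternative then says the solution continues as long as $\sum_k\|\psi_k(t)\|_{H^{s+2}}$ stays bounded. Persistence of regularity follows from a Gronwall/tame estimate: $\|\rho\psi_k\|_{H^\sigma}\lesssim\|\rho\|_{L^\infty}\|\psi_k\|_{H^\sigma}+\|\rho\|_{H^\sigma}\|\psi_k\|_{L^\infty}$ via Lemma~\ref{lem:KP}. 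This is linear in the top norm, with coefficient controlled by $\max_k\|\psi_k\|_{L^\infty}$ (which is also bounded in terms of $\max_k\|\psi_k\|_{H^1}$ by Lemma~\ref{lem:GN1d}). Hence an $H^1$ bound on each orbital prevents blow-up of all $H^\sigma$ norms, at worst with exponential growth. From the equation, $\partial_t\psi_k\in C(H^s)$, so $\psi_k\in C^1(H^s)$.

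\textbf{Step 2: algebraic structure and conservation laws.} Because $p\Delta+qV_\rho$ is self-adjoint, $\frac{d}{dt}\langle\psi_k,\psi_l\rangle=0$, so orthonormality is preserved. Differentiating \eqref{eq:Alber_system_finite} term by term gives $i\partial_t\gamma=[p\Delta+qV_\rho,\gamma]$. Here $\rho=\rho_\gamma$ by \eqref{eq:rho_spectral}, so \eqref{eq:Alber_op} holds. The regularity claims for $\gamma$ follow from the spectral identity \eqref{eq:spectral_identity} and the finiteness of the sum. Self-adjointness, non-negativity and rank $\le M$ are immediate. Since the eigenvalues are the constant $\mu_k$ with orthonormal eigenvectors, $\|\gamma\|_{\Sp^1}=\sum\mu_k$ and $\|\gamma\|_{\Sp^2}^2=\sum\mu_k^2$. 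For the energy, we write $E=p\sum_k\mu_k\langle\psi_k,\Delta\psi_k\rangle+\frac q2\int\rho^2$ and differentiate using the $C^1(H^s)\cap C(H^{s+2})$ regularity (taking $s\ge0$ is enough, since $\psi_k\in H^2$). This gives
\[
\frac{dE}{dt}=2\,\mathrm{Re}\sum_k\mu_k\langle\partial_t\psi_k,(p\Delta+qV_\rho)\psi_k\rangle .
\]
The right-hand side vanishes because $\partial_t\psi_k=-i(p\Delta+qV_\rho)\psi_k$ and $\langle -iH\psi,H\psi\rangle$ is purely imaginary.

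\textbf{Step 3: a priori bound (main obstacle).} By \eqref{eq:H1S1_split}, $\|\gamma\|_{H^1\Sp^1}=\mathcal M+\mathcal K$. In the defocusing case $pq<0$ the two energy terms have the same sign, so $|p|\mathcal K\le|E_0|$. In the focusing case, energy conservation gives \eqref{eq:pke1}. We then need the Hoffmann--Ostenhof bound $\|\nabla\sqrt\rho\|_{L^2}^2\le\mathcal K$, proved at the orbital level. On $\{\rho>0\}$ we have $\nabla\rho=2\mathrm{Re}\sum\mu_k\bar\psi_k\nabla\psi_k$, so Cauchy--Schwarz gives $|\nabla\rho|^2\le4\rho\sum\mu_k|\nabla\psi_k|^2$ and hence $|\nabla\sqrt\rho|^2\le\sum\mu_k|\nabla\psi_k|^2$. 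The zero set of $\rho$ is where I expect care to be needed. There I would regularize with $\sqrt{\rho+\epsilon}$, which is smooth, and let $\epsilon\to0$ using Lemma~\ref{lem:LSC}.

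Inserting this into \eqref{eq:GN1d} applied to $u=\sqrt\rho$ gives
\[
\|\rho\|_{L^2}^2\le\frac{\mathcal M^2}{|\T|}+2\mathcal M^{3/2}\sqrt{\mathcal K}.
\]
Combined with \eqref{eq:pke1}, this yields $\mathcal K\le A+B\sqrt{\mathcal K}$ with
\[
A=\frac{|E_0|+\frac{|q|}{2}\mathcal M^2/|\T|}{|p|},\qquad B=\frac{|q|\mathcal M^{3/2}}{|p|}.
\]
Therefore $\mathcal K\le\bigl(\frac{B+\sqrt{B^2+4A}}{2}\bigr)^2$. Bounding $|E_0|$ via Lemma~\ref{lem:E_finite} and using $\mathcal M\le\|\gamma(0)\|_{H^1\Sp^1}$ produces a $\bar Y$ that is explicit and non-decreasing in $\|\gamma(0)\|_{H^1\Sp^1}$.

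\textbf{Step 4: globality and uniqueness.} Since every $\mu_k>0$, the bound on $\mathcal K$ controls each $\|\nabla\psi_k\|_{L^2}^2\le\mathcal K/\mu_k$; orbitals with $\mu_k=0$ do not enter $\rho$. With $\rho$ controlled in $L^\infty$ via Lemma~\ref{lem:Bessel} and $\rho\in H^1$, every orbital solves a linear Schr\"odinger equation with bounded potential, so its $H^1$ norm is bounded by Gronwall. Step~1 then gives global existence in $C(\R;H^{s+2})$, and uniqueness comes from the local contraction.
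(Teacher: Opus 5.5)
Your proof is correct, and its skeleton is the paper's: contraction in $H^{s+2}$, conservation of the whole Gram matrix, the orbital Hoffmann--Ostenhof bound via $\sqrt{\rho+\varepsilon}$ and Lemma~\ref{lem:LSC}, Gagliardo--Nirenberg for $\sqrt\rho$, and the sublinear inequality $\mathcal{K}\le A+B\sqrt{\mathcal{K}}$ in the focusing case. Two steps are carried out differently.

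\emph{Energy conservation.} You use the Hamiltonian identity $\frac{d}{dt}E=2\,\mathrm{Re}\sum_k\mu_k\langle\partial_t\psi_k,(p\Delta+qV_\rho)\psi_k\rangle=0$. This is shorter than the paper's computation through the continuity equation and the current $j$, and equally rigorous at $H^{s+2}$ regularity.

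\emph{Globalization.} You run a Duhamel/tame-product Gr\"onwall on unweighted orbital norms, fed by $\|\nabla\psi_k\|_{L^2}^2\le\mathcal{K}/\mu_k$ and by a separate linear Gr\"onwall for orbitals with $\mu_k=0$. The paper instead differentiates the weighted functional $\Sigma_s=\sum_k\mu_k\|\psi_k\|_{H^{s+2}}^2$ and uses the Kato--Ponce commutator to reach $|\partial_t\Sigma_s|\le C_s|q|(\mathcal{M}+\mathcal{K})\Sigma_s$. Your route suffices for the lemma as stated, and it even covers the null-weight orbitals, which $\Sigma_s$ does not control. However, your constants depend on $\min_{\mu_k>0}\mu_k$. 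The paper's estimate gives the rate $e^{C_s|q|\bar Y|t|}$ uniformly in the rank and the weights. That uniformity is what Lemma~\ref{lem:finite_system}(c) needs for the Galerkin approximants, whose smallest weights can tend to zero as $N\to\infty$.

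Finally, you express the focusing constant through $|E_0|$ and Lemma~\ref{lem:E_finite} rather than through $\mathcal{K}(0)$. This makes $\bar Y$ slightly cruder but keeps it manifestly monotone in $\|\gamma(0)\|_{H^1\Sp^1}$.
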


\begin{remark}\label{rem:barY}
Explicitly, in the focusing case ($pq>0$),
\[
  \bar Y\;=\;\mathcal{M}+ \frac{1}{4}\left(\frac{|q|\,\mathcal{M}^{3/2}}{|p|}\;+\;\sqrt{\frac{q^{2}\,\mathcal{M}^{3}}{p^{2}}\;+\;4\,\mathcal{K}(0)\;+\;\frac{2\,|q|\,\mathcal{M}^{2}}{|p|\,|\T|}}\right)^2.
\]
In the defocusing case ($pq<0$),
$
  \bar Y\;=\;\mathcal{M}+\mathcal{K}(0)+\frac{|q|}{2\,|p|}\,\|\rho_0\|_{L^2}^{\,2},
$
which by the Bessel-kernel estimate of Lemma~\ref{lem:Bessel} at $s=1$ further satisfies
\[
  \bar Y\;\le\;\|\gamma_0\|_{H^1\Sp^1}\Bigl(1+\frac{B_1\,|q|}{2\,|p|}\,\|\gamma_0\|_{H^1\Sp^1}\Bigr).
\]
See Step~4 of the proof for the derivation of these values.
\end{remark}

\begin{proof}
The overall structure of the argument follows the standard  template for nonlinear Schr\"odinger equations in $H^s$ with $s>d/2$ (cf.\ \cite{Cazenave2003}, Ch.\ 3-4) applied component-wise. The difference is in getting the a priori bound in the system.

\smallskip

\noindent\textbf{Step 1: Local existence.} For $s>0,$ $H^{s+2}(\T)$ is a Banach algebra, hence the nonlinearity
\[
  (\psi_l)_{l=1}^M\;\longmapsto\;\bigl(V_{\rho}\psi_k\bigr)_{k=1}^M
  \;=\;\Bigl(\Bigl(\sum_{l=1}^M\mu_l|\psi_l|^2\Bigr)\psi_k\Bigr)_{k=1}^M
\]
is  locally Lipschitz $H^{s+2}(\T)^M\to H^{s+2}(\T)^M.$ Denote for brevity $\Psi=(\psi_l)_{l=1}^M;$ we will show local existence in
\[
  X_{T_*}:=L^\infty([0,T_*];H^{s+2}(\T)^M),
  \qquad
  \|\Psi\|_{X_{T_*}}:=\max_{1\le k\le M}\|\psi_k\|_{L^\infty_t H^{s+2}_x}.
\]
Set 
\[
\mathcal{M}_0:=\sum_{l=1}^M\mu_l, \qquad \qquad R_0:=\max_k\|\psi_k^0\|_{H^{s+2}}
\]
(note that these are fixed  constants depending only on the size of the initial condition).
 The Duhamel map
\begin{equation}\label{eq:Phi_finiteNLS}
  \Phi(\Psi)_k(t)\;:=\;e^{-ipt\Delta}\psi_k^0\;-\;iq\int_0^t e^{-ip(t-\tau)\Delta}\,V_{\rho(\Psi(\tau))}\psi_k(\tau)\,d\tau,
\end{equation}
where $\rho(\Psi):=\sum_{l=1}^M\mu_l|\psi_l|^2,$
is the orbital analog of the Duhamel map of Proposition~\ref{prop:local}.

\smallskip

\noindent\emph{Polynomial bound on the nonlinearity.} By the algebra property of $H^{s+2}$ applied twice (note that $s+2>1/2$),
\[
  \|\rho(\Psi)\|_{H^{s+2}}\;\le\;\sum_l\mu_l\||\psi_l|^2\|_{H^{s+2}}\;\le\;C\sum_l\mu_l\|\psi_l\|_{H^{s+2}}^2\;\le\;C\,\mathcal{M}_0\,\|\Psi\|_{X_{T_*}}^2,
\]
and applying the algebra property once more on $V_{\rho(\Psi)}\psi_k=\rho(\Psi)\,\psi_k,$
\begin{equation}\label{eq:cubic_bound_finiteNLS}
  \|V_{\rho(\Psi)}\psi_k\|_{H^{s+2}}\;\le\;C\|\rho(\Psi)\|_{H^{s+2}}\|\psi_k\|_{H^{s+2}}\;\le\;C\,\mathcal{M}_0\,\|\Psi\|_{X_{T_*}}^3.
\end{equation}

\smallskip

\noindent\emph{Mapping a ball to itself.} By isometry of $e^{-ipt\Delta}$ on $H^{s+2}$ and the Duhamel inequality,
\[
  \|\Phi(\Psi)\|_{X_{T_*}}\;\le\;R_0\;+\;C|q|\,\mathcal{M}_0\,T_*\,\|\Psi\|_{X_{T_*}}^3.
\]
On the ball $B_{2R_0}:=\{\Psi:\|\Psi\|_{X_{T_*}}\le 2R_0\},$ the right-hand side can be bounded
\[
\Psi \in B_{2R_0} \implies \| \Phi(\Psi)\|_{X_{T_*}} \le R_0+8C|q|\mathcal{M}_0 T_*R_0^3,
\]
and, therefore, by choosing
\begin{equation}\label{eq:Tstar_bound_finiteNLS}
  T_*\;\le\;\frac{1}{8C|q|\,\mathcal{M}_0\,R_0^2}
\end{equation}
we have $\Phi(B_{2R_0}) \subseteq B_{2R_0}.$
\smallskip

\noindent\emph{Contraction.} For $\Psi^{(1)},\Psi^{(2)}\in B_{2R_0},$ polarize
\[
  V_{\rho^{(1)}}\psi_k^{(1)}-V_{\rho^{(2)}}\psi_k^{(2)}\;=\;V_{\rho^{(1)}-\rho^{(2)}}\,\psi_k^{(1)}\;+\;V_{\rho^{(2)}}(\psi_k^{(1)}-\psi_k^{(2)}),
\]
with $\rho^{(j)}:=\rho(\Psi^{(j)}).$ Using
\[
  |\psi_l^{(1)}|^2-|\psi_l^{(2)}|^2\;=\;(\psi_l^{(1)}-\psi_l^{(2)})\overline{\psi_l^{(1)}}\;+\;\psi_l^{(2)}\overline{(\psi_l^{(1)}-\psi_l^{(2)})}
\]
and then the algebra property, we get
\[
  \|\rho^{(1)}-\rho^{(2)}\|_{H^{s+2}}\;\le\;C\mathcal{M}_0\bigl(\|\Psi^{(1)}\|_{X_{T_*}}+\|\Psi^{(2)}\|_{X_{T_*}}\bigr)\,\|\Psi^{(1)}-\Psi^{(2)}\|_{X_{T_*}}.
\]
Substituting in the polarized expression and bounding each piece by the algebra property:
\begin{align*}
  \|V_{\rho^{(1)}-\rho^{(2)}}\,\psi_k^{(1)}\|_{H^{s+2}}
  &\;\le\;C\,\|\rho^{(1)}-\rho^{(2)}\|_{H^{s+2}}\,\|\psi_k^{(1)}\|_{H^{s+2}}\\
  &\;\le\;C\mathcal{M}_0\bigl(\|\Psi^{(1)}\|_{X_{T_*}}+\|\Psi^{(2)}\|_{X_{T_*}}\bigr)\,\|\Psi^{(1)}\|_{X_{T_*}}\,\|\Psi^{(1)}-\Psi^{(2)}\|_{X_{T_*}},\\[2pt]
  \|V_{\rho^{(2)}}\,(\psi_k^{(1)}-\psi_k^{(2)})\|_{H^{s+2}}
  &\;\le\;C\,\|\rho^{(2)}\|_{H^{s+2}}\,\|\psi_k^{(1)}-\psi_k^{(2)}\|_{H^{s+2}}\\
  &\;\le\;C\mathcal{M}_0\,\|\Psi^{(2)}\|_{X_{T_*}}^2\,\|\Psi^{(1)}-\Psi^{(2)}\|_{X_{T_*}}.
\end{align*}
Adding, taking $\max_k,$ and using $\|\Psi^{(j)}\|_{X_{T_*}}\le 2R_0,$
\[
\begin{aligned}
  \|V_{\rho^{(1)}}\Psi^{(1)}-V_{\rho^{(2)}}\Psi^{(2)}\|_{X_{T_*}}
  &\;\le\;C\mathcal{M}_0\bigl(\|\Psi^{(1)}\|_{X_{T_*}}+\|\Psi^{(2)}\|_{X_{T_*}}\bigr)^{\!2}\,\|\Psi^{(1)}-\Psi^{(2)}\|_{X_{T_*}}\\
  &\;\le\;16C\mathcal{M}_0 R_0^2\,\|\Psi^{(1)}-\Psi^{(2)}\|_{X_{T_*}},
\end{aligned}
\]
so the Duhamel inequality yields
\[
  \|\Phi(\Psi^{(1)})-\Phi(\Psi^{(2)})\|_{X_{T_*}}
  \;\le\;16C|q|\mathcal{M}_0 T_* R_0^2\,\|\Psi^{(1)}-\Psi^{(2)}\|_{X_{T_*}}.
\]
Sharpening~\eqref{eq:Tstar_bound_finiteNLS} to
\[
  T_*\;\le\;\frac{1}{32C|q|\,\mathcal{M}_0\,R_0^2}
\]
makes $\Phi$ a $\tfrac12$-contraction on $B_{2R_0},$ and Banach's fixed-point theorem produces a unique $\Psi\in B_{2R_0}\subset C([0,T_*];H^{s+2}(\T))^M$ with $\Phi(\Psi)=\Psi.$ 
That is,
\begin{equation}\label{eq:orbital_Duhamel}
  \psi_k(t)=e^{-ipt\Delta}\psi_k^0
  \;-\;iq\int_0^t e^{-ip(t-\tau)\Delta}\,V_{\rho(\tau)}\,\psi_k(\tau)\,d\tau,
  \qquad k=1,\dots,M.
\end{equation}
Differentiating~\eqref{eq:orbital_Duhamel} once in $t,$ it follows that
\[
  \psi_k\in C([0,T_*];H^{s+2}(\T))\cap C^1([0,T_*];H^s(\T)),
\]
and~\eqref{eq:Alber_system_finite_NLS} holds pointwise in $t$ as an equality in $H^s.$

\smallskip

\noindent\textbf{Step 2: Preservation of the entire orthonormal Gram matrix.} For scalar NLS, only $\|\psi\|_{L^2}$ is conserved. Here the entire Gram matrix is:
\begin{equation}\label{eq:gram_conservation}
  \langle\psi_k(t),\psi_l(t)\rangle\;=\;\delta_{kl}
  \qquad\forall\,k,l\in\{1,\dots,M\},\ \forall\,t.
\end{equation}
Differentiating $\langle\psi_k,\psi_l\rangle$ along the flow,
\[
  \partial_t\langle\psi_k,\psi_l\rangle
  =\langle-ip\Delta\psi_k-iqV_\rho\psi_k,\psi_l\rangle
  +\langle\psi_k,-ip\Delta\psi_l-iqV_\rho\psi_l\rangle
  =0.
\]
Note that we have established enough regularity for these operations to be well defined.

\smallskip

\noindent\textbf{Step 3: Operator-level packaging and conservation laws.} Set $\gamma(t):=\sum_k\mu_k|\psi_k(t)\rangle\langle\psi_k(t)|.$ Orthonormality~\eqref{eq:gram_conservation} and $\mu_k\ge 0$ make $\gamma(t)$ self-adjoint and non-negative with $\operatorname{rank}\le M.$ The kernel $K_\gamma(x,y,t)=\sum_k\mu_k\,\psi_k(x,t)\overline{\psi_k(y,t)}$ is smooth, so kernel-diagonal manipulations involving $\rho_\gamma(x,t)=K_\gamma(x,x,t)$ are direct.

\smallskip

\noindent\emph{Operator equation.} Differentiating $\gamma$ via the orbital ODEs and using orthonormality identifies
\[
  i\partial_t\gamma\;=\;p[\Delta,\gamma]\;+\;q[V_{\rho_\gamma},\gamma],
\]
i.e.\ $\gamma$ solves the operator Alber equation~\eqref{eq:Alber_op}.

\smallskip

\noindent\emph{Mass and Hilbert--Schmidt conservation.} Orthonormality of $\{\psi_k(t)\}_k$ for every $t$ (Step~2) gives
\[
  \|\gamma(t)\|_{\Sp^1}\;=\;\sum_{k=1}^M\mu_k\;=\;\mathcal{M},
  \qquad
  \|\gamma(t)\|_{\Sp^2}^{\,2}\;=\;\sum_{k=1}^M\mu_k^{\,2},
\]
the first two equalities of~\eqref{eq:finiteNLS_conservation}.

\smallskip

\noindent\emph{Energy conservation.} Define the orbital current
\[
  j(x,t)\;:=\;\sum_{k=1}^{M}\mu_k\,\mathrm{Im}\bigl(\overline{\psi_k(x,t)}\,\nabla\psi_k(x,t)\bigr).
\]
From~\eqref{eq:Alber_system_finite_NLS} and its complex conjugate, multiplied respectively by $\overline{\psi_k}$ and $\psi_k$ and subtracted (the $V_\rho$-term cancelling pointwise because $V_\rho$ is real),
\[
  \partial_t|\psi_k|^2\;=\;2p\,\nabla\!\cdot\!\mathrm{Im}\bigl(\overline{\psi_k}\nabla\psi_k\bigr).
\]
Multiplying by $\mu_k$ and summing yields the orbital continuity equation
\begin{equation}\label{eq:continuity_finiteNLS}
  \partial_t\rho\;-\;2p\,\nabla\!\cdot\!j\;=\;0.
\end{equation}
Multiplying~\eqref{eq:continuity_finiteNLS} by $\rho$ and integrating by parts on $\T,$
\begin{equation}\label{eq:rhoL2_dt_finiteNLS}
  \partial_t\|\rho(t)\|_{L^2}^{\,2}\;=\;2\!\int_\T\rho\,\partial_t\rho\,dx\;=\;4p\!\int_\T\rho\,\nabla\!\cdot\!j\,dx\;=\;-4p\!\int_\T\nabla\rho\cdot j\,dx.
\end{equation}
For the kinetic energy at the orbital level,
\[
  \mathcal{K}(t)\;:=\;\sum_k\mu_k\|\nabla\psi_k(t)\|_{L^2}^{\,2}\;=\;\tr(-\Delta\gamma(t)),
\]
differentiate and integrate by parts:
\[
\begin{aligned}
  \partial_t\mathcal{K}
  &\;=\;2\sum_k\mu_k\,\mathrm{Re}\langle\nabla\psi_k,\nabla\partial_t\psi_k\rangle_{L^2}
  \;=\;-2\sum_k\mu_k\,\mathrm{Re}\langle\Delta\psi_k,\partial_t\psi_k\rangle_{L^2}.
\end{aligned}
\]
Substituting $\partial_t\psi_k=-ip\Delta\psi_k-iqV_\rho\psi_k$ from~\eqref{eq:Alber_system_finite_NLS}, the $\Delta\psi_k$ contribution gives $-2\,\mathrm{Re}(-ip)\sum_k\mu_k\|\Delta\psi_k\|_{L^2}^{\,2}=0,$ and integration by parts on the $V_\rho\psi_k$ contribution (using that $V_\rho=\rho$ is real) yields
\[
  -2\sum_k\mu_k\,\mathrm{Re}\langle\Delta\psi_k,-iqV_\rho\psi_k\rangle_{L^2}
  \;=\;-2q\!\int_\T\nabla\rho\cdot j\,dx,
\]
so that
\begin{equation}\label{eq:Kcal_dt_finiteNLS}
  \partial_t\mathcal{K}(t)\;=\;-2q\!\int_\T\nabla\rho\cdot j\,dx.
\end{equation}
Combining~\eqref{eq:rhoL2_dt_finiteNLS} and~\eqref{eq:Kcal_dt_finiteNLS} with $E(\gamma)=p\,\tr(\Delta\gamma)+\tfrac{q}{2}\|\rho\|_{L^2}^{\,2}=-p\,\mathcal{K}+\tfrac{q}{2}\|\rho\|_{L^2}^{\,2},$
\[
  \partial_t E(\gamma(t))
  \;=\;2pq\!\int_\T\nabla\rho\cdot j\,dx\;-\;2pq\!\int_\T\nabla\rho\cdot j\,dx
  \;=\;0,
\]
proving $E(\gamma(t))=E(\gamma(0))=:E_0,$ the third equality of~\eqref{eq:finiteNLS_conservation}.

\smallskip

\noindent\textbf{Step 4: A priori $H^1$ bound (\emph{inspired by scalar NLS, but one more step is needed}).} The difference  is due to the position density having contributions from all coordinates, $\rho=\sum\mu_k|\psi_k|^2.$ To proceed we reformulate
 $\|\rho\|_{L^2}^2=\|\sqrt\rho\|_{L^4}^4$ and apply the $1$D Gagliardo--Nirenberg inequality (Lemma~\ref{lem:GN1d}) to $u=\sqrt\rho:$
  \begin{equation}\label{eq:GN_finiteNLS}
  \begin{aligned}
    \|\sqrt\rho\|_{L^4}^4
    &\;\le\;\frac{1}{|\T|}\|\sqrt\rho\|_{L^2}^4+2\|\sqrt\rho\|_{L^2}^3\|\nabla\sqrt\rho\|_{L^2}\\
    &\;=\;\frac{\mathcal{M}^2}{|\T|}+2\mathcal{M}^{3/2}\|\nabla\sqrt\rho\|_{L^2}.
  \end{aligned}
  \end{equation}

 To control $\|\nabla\sqrt\rho\|_{L^2}^2,$ observe first of all that
  \begin{gather*}
    |\nabla\rho|
    =\Bigl|2\sum_k\mu_k\,\mathrm{Re}\bigl(\overline{\psi_k}\,\nabla\psi_k\bigr)\Bigr|
    \;\le\; 2\sum_k\bigl(\sqrt{\mu_k}\,|\psi_k|\bigr)\bigl(\sqrt{\mu_k}\,|\nabla\psi_k|\bigr)\\
    \le\; 2\sqrt{\rho}\,\Bigl(\sum_k\mu_k|\nabla\psi_k|^2\Bigr)^{1/2}
    \quad\Longrightarrow\quad
    |\nabla\rho|^2\;\le\;4\rho\sum_k\mu_k|\nabla\psi_k|^2,
  \end{gather*}
  and thus
\[
\begin{aligned}
  \|\nabla\sqrt{\rho+\varepsilon}\|_{L^2}^2
  &\;=\;\int_\T\frac{|\nabla\rho|^2}{4(\rho+\varepsilon)}\,dx\\
  &\;\le\;\int_\T\frac{\rho}{\rho+\varepsilon}\sum_k\mu_k|\nabla\psi_k|^2\,dx
   \;\le\;\sum_k\mu_k\|\nabla\psi_k\|_{L^2}^2.
\end{aligned}
\]
Now  Lemma~\ref{lem:LSC} yields
  \begin{equation}\label{eq:HO_finiteNLS}
    \|\nabla\sqrt\rho\|_{L^2}^2=\lim\limits_{\varepsilon\to 0^+}     \|\nabla\sqrt{\rho+\varepsilon}\|_{L^2}^2\;\le\;\sum_k\mu_k\|\nabla\psi_k\|_{L^2}^2,
  \end{equation}
  namely the Hoffmann-Ostenhof bound for a finite-rank system.

\smallskip

Now, to collect everything: with the kinetic energy $\mathcal{K}(t)=\tr(-\Delta\gamma(t))=\sum_k\mu_k\|\nabla\psi_k(t)\|_{L^2}^2$ as in Step~3, the bounds~\eqref{eq:GN_finiteNLS} and~\eqref{eq:HO_finiteNLS} chain into
\begin{equation}\label{eq:rhoL2_chain_finiteNLS}
  \|\rho(t)\|_{L^2}^2
  \;\le\;\frac{\mathcal{M}^2}{|\T|}+2\mathcal{M}^{3/2}\|\nabla\sqrt{\rho(t)}\|_{L^2}
  \;\le\;\frac{\mathcal{M}^2}{|\T|}+2\mathcal{M}^{3/2}\sqrt{\mathcal{K}(t)}.
\end{equation}

Energy conservation, with $\tr(\Delta\gamma)=-\mathcal{K},$ rearranges to the energy identity
\begin{equation}\label{eq:energy_identity_finiteNLS}
  \mathcal{K}(t)\;=\;\mathcal{K}(0)+\frac{q}{2p}\bigl(\|\rho(t)\|_{L^2}^2-\|\rho_0\|_{L^2}^2\bigr).
\end{equation}

\emph{Focusing case ($pq>0$).} Here $q/(2p)=|q|/(2|p|)>0,$ so dropping the non-positive $-\|\rho_0\|_{L^2}^2$ term in~\eqref{eq:energy_identity_finiteNLS} and substituting~\eqref{eq:rhoL2_chain_finiteNLS},
\[
  \mathcal{K}(t)
  \;\le\;\mathcal{K}(0)+\frac{|q|}{2|p|}\Bigl(\frac{\mathcal{M}^2}{|\T|}+2\mathcal{M}^{3/2}\sqrt{\mathcal{K}(t)}\Bigr)
  \;=\; B+A\sqrt{\mathcal{K}(t)},
\]
with
\[
  A\;:=\;\frac{|q|\,\mathcal{M}^{3/2}}{|p|},
  \qquad
  B\;:=\;\mathcal{K}(0)+\frac{|q|\,\mathcal{M}^2}{2|p|\,|\T|}.
\]
Setting $x:=\sqrt{\mathcal{K}(t)}\ge 0,$ this is the quadratic inequality $x^2-Ax-B\le 0,$ whose non-negative root is $x_\star=\bigl(A+\sqrt{A^2+4B}\bigr)/2.$ Hence
\begin{equation}\label{eq:apriori_finiteNLS_foc}
  \mathcal{K}(t)\;\le\;x_\star^2
  \qquad\text{for all }t\text{ in the lifespan.}
\end{equation}

\emph{Defocusing case ($pq<0$).} Here $q/(2p)=-|q|/(2|p|)<0,$ so dropping the non-positive $-\|\rho(t)\|_{L^2}^2$ term in~\eqref{eq:energy_identity_finiteNLS} gives directly
\begin{equation}\label{eq:apriori_finiteNLS_def}
  \mathcal{K}(t)\;\le\;\mathcal{K}(0)+\frac{|q|}{2|p|}\,\|\rho_0\|_{L^2}^2,
\end{equation}
controlled by initial data without invoking~\eqref{eq:rhoL2_chain_finiteNLS}.

In both cases, the kinetic energy $\mathcal{K}(t)$ is bounded a priori on the lifespan by a finite constant depending only on the size of the initial data and on $p,q,|\T|.$ Combined with conservation of mass $\mathcal{M},$ this yields a uniform $H^1\Sp^1$ bound at the orbital level: there exists $\bar Y=\bar Y(\|\gamma_0\|_{H^1\Sp^1},p,q,|\T|)<\infty$ such that
\begin{equation}\label{eq:Ybar_finiteNLS}
  \mathcal{M}+\mathcal{K}(t)\;\le\;\bar Y\qquad\text{for all }t\text{ in the lifespan.}
\end{equation}

The new ingredient compared to scalar pure-state NLS is the chain~\eqref{eq:rhoL2_chain_finiteNLS}: Hoffmann--Ostenhof~\eqref{eq:HO_finiteNLS} bridges $\|\rho\|_{L^2}^2$ (potential energy term) and $\sum_k\mu_k\|\nabla\psi_k\|_{L^2}^2$ (kinetic energy), and the GN constant on the right of~\eqref{eq:GN_finiteNLS} is controlled by $\mathcal{M}=\|\sqrt\rho\|_{L^2}^2,$ which is preserved precisely because of the Gram-matrix conservation of Step~2.

\smallskip

\noindent\textbf{Step 5: Global extension for the $H^{s+2}$ norm.} Now we will show that the a priori $H^1$ bound precludes blow up of the $H^{s+2}$ norm. Set
\[
  \Sigma_s(t)\;:=\;\sum_k\mu_k\|\psi_k(t)\|_{H^{s+2}}^2.
\]
Throughout this step, $C_s>0$ denotes a generic constant collecting the Kato--Ponce constant of~\eqref{eq:KP_commutator} at exponent $s+2,$ the algebra-of-products constant on $H^{s+2}(\T),$ and the Sobolev embedding constant $H^1(\T)\hookrightarrow L^\infty(\T)$ in $d=1.$ All three depend only on $s$ and $|\T|;$ in particular $C_s$ is independent of the orbital data, the weights $\mu_k,$ and the rank $M$ of the system.

\smallskip

\noindent\emph{Reduction to a commutator term.} Differentiating $\Sigma_s$ in time,
\[
  \partial_t\Sigma_s
  \;=\;2\sum_k\mu_k\,\mathrm{Re}\,\langle\partial_t\psi_k,\psi_k\rangle_{H^{s+2}},
\]
and substituting the time derivative from~\eqref{eq:Alber_system_finite_NLS} 
\begin{equation}\label{eq:Sigma_dt_split}
  \partial_t\Sigma_s
  \;=\;2\sum_k\mu_k\,\mathrm{Im}\bigl[p\,\langle\Delta\psi_k,\psi_k\rangle_{H^{s+2}}\;+\;q\,\langle V_\rho\psi_k,\psi_k\rangle_{H^{s+2}}\bigr].
\end{equation}
The real terms drop out, leaving only the commutator:
\[
  \partial_t\Sigma_s
  \;=\;2q\sum_k\mu_k\,\mathrm{Im}\,\langle[\langle D\rangle^{s+2},V_\rho]\psi_k,\langle D\rangle^{s+2}\psi_k\rangle_{L^2}.
\]

By Cauchy--Schwarz in the $L^2$ pairing,
\begin{equation}\label{eq:Sigma_dt_KP}
  |\partial_t\Sigma_s|
  \;\le\;2|q|\sum_k\mu_k\,\|[\langle D\rangle^{s+2},V_\rho]\psi_k\|_{L^2}\,\|\psi_k\|_{H^{s+2}}.
\end{equation}

\smallskip

\noindent\emph{Kato--Ponce on the commutator.} Inequality~\eqref{eq:KP_commutator} applied with $f=\rho,$ $g=\psi_k,$ and $s$ replaced by $s+2$ gives
\begin{equation}\label{eq:KP_comm_finiteNLS}
  \|[\langle D\rangle^{s+2},V_\rho]\psi_k\|_{L^2}
  \;\le\;C\bigl(\|\rho\|_{H^{s+2}}\|\psi_k\|_{L^\infty}+\|\rho\|_{L^\infty}\|\psi_k\|_{H^{s+2}}\bigr).
\end{equation}

\smallskip

\noindent\emph{Bounds on $\|\rho\|_{H^{s+2}}$ and $\|\rho\|_{L^\infty}.$} For the $H^{s+2}$ norm, using Lemma~\ref{lem:KP} on $|\psi_k|^2=\psi_k\overline{\psi_k}$ and Sobolev embedding $H^1\hookrightarrow L^\infty$ in $d=1,$
\[
\begin{aligned}
  \|\rho\|_{H^{s+2}}
  &\;\le\;\sum_k\mu_k\||\psi_k|^2\|_{H^{s+2}}
   \;\le\;C_s\sum_k\mu_k\|\psi_k\|_{H^{s+2}}\|\psi_k\|_{L^\infty}\\
  &\;\le\;C_s\sum_k\mu_k\|\psi_k\|_{H^{s+2}}\|\psi_k\|_{H^1}.
\end{aligned}
\]
Cauchy--Schwarz on the sum with weights $\sqrt{\mu_k}\|\psi_k\|_{H^{s+2}}$ and $\sqrt{\mu_k}\|\psi_k\|_{H^1}$ gives
\begin{equation}\label{eq:rho_Hs2_finiteNLS}
  \|\rho\|_{H^{s+2}}
  \;\le\;C_s\sqrt{\Sigma_s(t)}\,\sqrt{\sum_k\mu_k\|\psi_k\|_{H^1}^2}
  \;=\;C_s\sqrt{\Sigma_s(t)}\,\sqrt{\mathcal{M}+\mathcal{K}},
\end{equation}
where in the last step we used $\sum_k\mu_k\|\psi_k\|_{H^1}^2=\sum_k\mu_k(\|\psi_k\|_{L^2}^2+\|\nabla\psi_k\|_{L^2}^2)=\mathcal{M}+\mathcal{K}$ by orthonormality of the $\psi_k.$ For $\|\rho\|_{L^\infty},$ the same Sobolev embedding gives
\begin{equation}\label{eq:rho_Linfty_finiteNLS}
  \|\rho\|_{L^\infty}\;\le\;\sum_k\mu_k\|\psi_k\|_{L^\infty}^2\;\le\;C_s\sum_k\mu_k\|\psi_k\|_{H^1}^2\;=\;C_s(\mathcal{M}+\mathcal{K}).
\end{equation}

\smallskip

\noindent\emph{Closing Gr\"onwall.} Substituting~\eqref{eq:KP_comm_finiteNLS} into~\eqref{eq:Sigma_dt_KP}, the right-hand side splits as
\[
  |\partial_t\Sigma_s|
  \;\le\;2|q|C_s\,\|\rho\|_{H^{s+2}}\sum_k\mu_k\|\psi_k\|_{L^\infty}\|\psi_k\|_{H^{s+2}}
  \;+\;2|q|C_s\,\|\rho\|_{L^\infty}\,\Sigma_s.
\]
The first term is bounded using Cauchy--Schwarz in $k$
\[
  \sum_k\mu_k\|\psi_k\|_{L^\infty}\|\psi_k\|_{H^{s+2}}
  \;\le\;C_s\sqrt{\Sigma_s}\,\sqrt{\mathcal{M}+\mathcal{K}},
\]
and combining with~\eqref{eq:rho_Hs2_finiteNLS}
\[
  \|\rho\|_{H^{s+2}}\sum_k\mu_k\|\psi_k\|_{L^\infty}\|\psi_k\|_{H^{s+2}}
  \;\le\;C_s\bigl(\sqrt{\Sigma_s}\,\sqrt{\mathcal{M}+\mathcal{K}}\bigr)^{\!2}
  \;=\;C_s(\mathcal{M}+\mathcal{K})\,\Sigma_s.
\]
Together with~\eqref{eq:rho_Linfty_finiteNLS},
\[
  |\partial_t\Sigma_s|\;\le\;C_s\,|q|\,(\mathcal{M}+\mathcal{K}(t))\,\Sigma_s(t),
\]
with $C_s$ uniform in everything but $s$ and $|\T|.$ By the a priori bound~\eqref{eq:Ybar_finiteNLS} of Step~4, $\mathcal{M}+\mathcal{K}(t)\le \bar Y$ uniformly on the lifespan. Gr\"onwall yields
\begin{equation}\label{eq:Sigma_growth}
  \Sigma_s(t)\;\le\;\Sigma_s(0)\,e^{C_s\,|q|\,\bar Y\,|t|},
\end{equation}
ruling out finite-time blow-up of $\Sigma_s$ and hence extending the local $H^{s+2}$ solution from Step~1 to all $t\in\R.$ The spectral identity~\eqref{eq:spectral_identity} translates~\eqref{eq:Sigma_growth} into $\gamma\in C(\R;H^{s+2}\Sp^1(\T))$ with the same exponential-in-$t$ bound.

\smallskip

\noindent\textbf{Recap of the differences from scalar pure-state NLS.} Steps~1 and~5 are componentwise repackaging of standard NLS local theory at $H^{s+2}$ with the $C^1H^s$ time regularity coming from differentiating Duhamel; the new ingredients are Step~2 (the entire orthonormal Gram matrix is preserved, not just $\|\psi\|_{L^2},$ thanks to the shared real potential $V_\rho$) and Step~4 (the focusing $H^1$ a priori bound is closed via Hoffmann--Ostenhof + Gagliardo-Nirenberg, as opposed to direct Gagliardo-Nirenberg applied directly to $\|\psi\|_{L^4}^4$ in the scalar case. Step~3 is the operator-level packaging: $\gamma=\sum\mu_k|\psi_k\rangle\langle\psi_k|$ inherits self-adjointness and non-negativity automatically from the orbital construction.
\end{proof}

\begin{lemma}[Qualitative properties of operator-valued Alber solutions]
\label{lem:finite_system}
Let $T>0$ and let $\gamma\in C([0,T];H^1\Sp^1(\T))$ be a mild solution of~\eqref{eq:Alber_op} with self-adjoint, non-negative datum $\gamma_0=\gamma_0^*\ge 0.$ Then:
\begin{enumerate}
\item[(a)] \emph{Self-adjointness and non-negativity.}
\[
  \gamma(t)=\gamma(t)^{*}\ge 0\qquad\text{for every }t\in[0,T].
\]
\item[(b)] \emph{Conservation laws and uniform $H^1\Sp^1$ bound.} The mass, Hilbert--Schmidt norm, and energy~\eqref{eq:E_def} are conserved:
\begin{equation}\label{eq:lfs_conservation}
\begin{aligned}
  &\|\gamma(t)\|_{\Sp^1}\;=\;\|\gamma_0\|_{\Sp^1}\;=:\;\mathcal{M},
  \qquad
  \|\gamma(t)\|_{\Sp^2}\;=\;\|\gamma_0\|_{\Sp^2},\\
  &E(\gamma(t))\;=\;E(\gamma_0)\;=:\;E_0.
\end{aligned}
\end{equation}
Moreover, 
\begin{equation}\label{eq:lfs_apriori}
  \|\gamma(t)\|_{H^1\Sp^1}\;=\;\mathcal{M}+\mathcal{K}(t)\;\le\;\bar Y\qquad\text{for every }t\in[0,T]
\end{equation}
with $\bar Y=\bar Y(\|\gamma_0\|_{H^1\Sp^1},p,q,|\T|)$ given by Lemma~\ref{lem:finite_NLS}.
\item[(c)] \emph{Propagation of higher Sobolev regularity.} If $\gamma_0\in H^{s+2}\Sp^1(\T)$ for some $s\ge 0,$ then
\[
  \gamma\in C([0,T];H^{s+2}\Sp^1(\T))\cap C^1([0,T];H^s\Sp^1(\T)),
\]
the Alber equation~\eqref{eq:Alber_op} holds pointwise in $t$ as an identity in $H^s\Sp^1,$ and
\begin{equation}\label{eq:lfs_Hs_bound}
  \|\gamma(t)\|_{H^{s+2}\Sp^1}\;\le\;\|\gamma_0\|_{H^{s+2}\Sp^1}\,e^{C_s\,|q|\,\bar Y\,|t|}
  \qquad\text{for every }t\in[0,T],
\end{equation}
where $C_s$ is the constant appearing in the orbital-level Gr\"onwall closure of Lemma~\ref{lem:finite_NLS} (eq.~\eqref{eq:Sigma_growth}).
\end{enumerate}
\end{lemma}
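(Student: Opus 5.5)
The plan is a Galerkin approximation combined with stability of the mild solution map. Let $\gamma_0^{(N)}:=P_N\gamma_0P_N$ be the static truncation of Proposition~\ref{prop:galerkin}. By part (vi) it has a finite spectral decomposition with smooth orthonormal orbitals $\psi_k^{(N)}$ and weights $\mu_k^{(N)}\ge 0$. Feeding these into Lemma~\ref{lem:finite_NLS} (with any $s\ge 0$) yields global smooth finite-rank solutions $\gamma^{(N)}(t)$. Each is self-adjoint and non-negative, conserves mass, Hilbert--Schmidt norm and energy, and obeys $\|\gamma^{(N)}(t)\|_{H^1\Sp^1}\le \bar Y(\|\gamma_0^{(N)}\|_{H^1\Sp^1},p,q,|\T|)\le \bar Y(\|\gamma_0\|_{H^1\Sp^1},p,q,|\T|)$. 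The last inequality uses Proposition~\ref{prop:galerkin}(iii) and the monotonicity of $\bar Y$.

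Each $\gamma^{(N)}$ is a $C^1$ strong solution, hence also a mild solution of~\eqref{eq:Duhamel}. The main step is to show $\gamma^{(N)}\to\gamma$ in $C([0,T];H^1\Sp^1)$. Set $R:=\max(\sup_{[0,T]}\|\gamma(t)\|_{H^1\Sp^1},\bar Y)$ and pick a step $\tau=T(2R)$ from Remark~\ref{rem:delta_T}. Apply Proposition~\ref{prop:local} iteratively on the intervals $[j\tau,(j+1)\tau]$; uniqueness in the contraction ball identifies both $\gamma$ and $\gamma^{(N)}$ there. This gives
\[
\|\gamma-\gamma^{(N)}\|_{L^\infty([j\tau,(j+1)\tau];H^1\Sp^1)}\le 2\|\gamma(j\tau)-\gamma^{(N)}(j\tau)\|_{H^1\Sp^1},
\]
and after $\lceil T/\tau\rceil$ steps the error is at most $2^{\lceil T/\tau\rceil}\|\gamma_0-\gamma_0^{(N)}\|_{H^1\Sp^1}\to 0$ by Proposition~\ref{prop:galerkin}(iv). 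The delicate point is that uniqueness of $\gamma$ must be read off in a ball large enough to contain both solutions. This is why $R$ uses the a priori $\bar Y$ for the approximants, together with the continuity and hence boundedness of $\gamma$ on $[0,T]$. I expect this bookkeeping to be the main obstacle, though it is routine.

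With $H^1\Sp^1$ convergence, the properties pass to the limit.
\begin{itemize}
\item[(a)] Self-adjointness and non-negativity are closed under $\Sp^1$ limits, since $\langle\varphi,\gamma\varphi\rangle$ is a limit of non-negative numbers.
\item[(b)] $\Sp^1$ and $\Sp^2$ norms are continuous. Energy is continuous on non-negative $H^1\Sp^1$ operators: $\tr(-\Delta\gamma)=\|\gamma\|_{H^1\Sp^1}-\tr\gamma$ by~\eqref{eq:H1S1_split}, and $\rho$ converges in $H^1\subset L^2$ by Lemma~\ref{lem:trace}. Conservation therefore transfers, and the bound~\eqref{eq:lfs_apriori} follows by taking limits.
\end{itemize}
Negative times are handled symmetrically.

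For (c), take the Galerkin data with $\gamma_0\in H^{s+2}\Sp^1$. Then~\eqref{eq:Sigma_growth}, via the spectral identity~\eqref{eq:spectral_identity}, bounds $\|\gamma^{(N)}(t)\|_{H^{s+2}\Sp^1}$ by $\|\gamma_0\|_{H^{s+2}\Sp^1}e^{C_s|q|\bar Y|t|}$ uniformly in $N$. Since $\gamma^{(N)}(t)\to\gamma(t)$ in $H^1\Sp^1$, weak-$*$ lower semicontinuity of the $\Sp^1$ norm gives the same bound for $\gamma(t)$. To apply it, pair $\langle D\rangle^{s+2}\gamma^{(N)}\langle D\rangle^{s+2}$ against compact operators; the sequence is bounded in $\Sp^1=(\mathcal{K})^*$, and its limit is identified through the $H^1\Sp^1$ convergence.

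Continuity in time in $H^{s+2}\Sp^1$ follows from Duhamel: Proposition~\ref{prop:bilinear} at level $s+2$ makes $\mathcal{N}(\gamma)\in L^\infty_tH^{s+2}\Sp^1$, so the integral is continuous. Alternatively, rerun the local theory at level $s+2$, using persistence of regularity and the now-available a priori bound. Finally, $[\Delta,\gamma]\in H^s\Sp^1$ because $\langle D\rangle^{s}\Delta\langle D\rangle^{-s-2}$ is bounded. Differentiating the mild formula then gives $C^1([0,T];H^s\Sp^1)$ and the equation as an identity in $H^s\Sp^1$.
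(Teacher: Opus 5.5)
Your proposal is correct and follows essentially the same route as the paper. Both arguments take the Galerkin truncation $P_N\gamma_0P_N$ and produce global smooth approximants from Lemma~\ref{lem:finite_NLS}. Convergence in $C([0,T];H^1\Sp^1)$ then comes from iterating the Lipschitz stability of Proposition~\ref{prop:local} over $\lceil T/\tau\rceil$ steps with a $2^{K}$ error factor. Finally, (a)--(c) pass to the limit via closure under $\Sp^1$/$\Sp^\infty$ limits, continuity of $E$, weak-$*$ lower semicontinuity of the $\Sp^1$ norm, and the Duhamel formula.

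The only minor difference is how $\gamma^{(N)}$ is kept inside the contraction ball. You use its uniform a priori bound $\bar Y$, whereas the paper controls $\|\gamma^{(N)}(t_k)\|_{H^1\Sp^1}\le\Lambda+1$ inductively from the error estimate itself. Both work.
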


\noindent Note that the timescale $T$ here is arbitrary; in particular, unlike Proposition \ref{prop:local}, it is not connected with the size of $\gamma_0.$

\begin{proof}
\noindent\textbf{Setup: finite-rank approximants.} For $N\in\mathbb{N},$ define the Galerkin truncation
\[
  \gamma_0^{(N)} \;:=\; P_N\,\gamma_0\,P_N.
\]
We collect the properties of $\gamma_0^{(N)}$ that will be used in what follows. By Proposition~\ref{prop:galerkin}(i), $\gamma_0^{(N)}$ is finite-rank with $\operatorname{rank}\gamma_0^{(N)}\le 2N+1,$ and lies in $H^\sigma\Sp^1(\T)$ for every $\sigma\ge 0$ --- in particular, it is smooth at the operator level. Self-adjointness and non-negativity are inherited from the datum: since $\gamma_0=\gamma_0^*\ge 0,$ Proposition~\ref{prop:galerkin}(ii) gives $\gamma_0^{(N)}=(\gamma_0^{(N)})^*\ge 0.$ The $H^1\Sp^1$ norm is dominated by that of the datum uniformly in $N,$
\[
  \|\gamma_0^{(N)}\|_{H^1\Sp^1}\;\le\;\|\gamma_0\|_{H^1\Sp^1}
\]
(Proposition~\ref{prop:galerkin}(iii)), and $\gamma_0^{(N)}\to\gamma_0$ in $H^1\Sp^1$ as $N\to\infty$ --- and, when $\gamma_0\in H^{s+2}\Sp^1,$ also in $H^{s+2}\Sp^1$ --- by Proposition~\ref{prop:galerkin}(iv). Finally, since $\gamma_0^{(N)}$ is self-adjoint and non-negative with finite-dimensional range, it admits a finite spectral decomposition
\[
  \gamma_0^{(N)} \;=\; \sum_{k=1}^{M_N}\mu_k^{(N)}\,|\psi_k^{(N)}\rangle\langle\psi_k^{(N)}|,
  \qquad M_N\le 2N+1,
\]
with weights $\mu_k^{(N)}\ge 0$ and orthonormal orbitals $\psi_k^{(N)}\in P_N L^2(\T)\subset C^\infty(\T)$ (Proposition~\ref{prop:galerkin}(vi)).

Thus, Lemma~\ref{lem:finite_NLS} applies and produces a unique global solution
\[
  \gamma^{(N)}\in C(\R;\,H^\sigma\Sp^1(\T)) \quad \text{for every } \sigma\ge 0,
\]
self-adjoint and non-negative, with $\operatorname{rank}\gamma^{(N)}(t)\le 2N+1,$ solving the operator Alber equation~\eqref{eq:Alber_op}, and satisfying the conservation laws~\eqref{eq:finiteNLS_conservation} together with the uniform $H^1\Sp^1$ bound~\eqref{eq:Ybar_finiteNLS_stmt}. All three clauses of the present lemma are obtained by passing the corresponding properties of $\gamma^{(N)}$ to the limit $N\to\infty.$

\smallskip
\noindent\textbf{Step 1: Convergence $\gamma^{(N)}\to\gamma$ in $C([0,T];H^1\Sp^1).$} We will show that, given any $\varepsilon>0,$ we can always choose a $N_0(\varepsilon)\in\mathbb{N}$ so that
\begin{equation}\label{eq:imdef3}
N\ge N_0 \implies \|\gamma^{(N)}-\gamma\|_{C([0,T];H^1\Sp^1)}\le \varepsilon.
\end{equation}

Both $\gamma$ and $\gamma^{(N)}$ satisfy the Duhamel equation~\eqref{eq:Duhamel}, and we timestep using the $H^1\Sp^1$-level Lipschitz stability of Proposition~\ref{prop:local}. Set 
\[
\Lambda:=\sup_{t\in[0,T]}\|\gamma(t)\|_{H^1\Sp^1}<\infty
\]
 by continuity of $\gamma$ on $[0,T].$ Set $\tau:=1/(8C(\Lambda+1)),$ so that $\delta(\tau)\ge\Lambda+1$ (Remark~\ref{rem:delta_T}). Partition $[0,T]$ into $K=\lceil T/\tau\rceil$ sub-intervals $0=t_0<t_1<\cdots<t_K=T$ with $t_{k+1}-t_k\le\tau.$ By Proposition~\ref{prop:galerkin}(iv), we can pick $N_0$ so that 
 \begin{equation}\label{eq:n0eps}
 2^K\|\gamma_0^{(N)}-\gamma_0\|_{H^1\Sp^1}\le \min(\varepsilon,1) \quad\mbox{for} \quad N\ge N_0.
  \end{equation}

Fix $N\ge N_0.$ We claim by induction on $k$ that
\begin{equation}\label{eq:lfs_ind}
  \|\gamma^{(N)}(t_k)-\gamma(t_k)\|_{H^1\Sp^1}\le 2^k\|\gamma_0^{(N)}-\gamma_0\|_{H^1\Sp^1}
  \qquad(0\le k\le K).
\end{equation}
The base case is a trivial equality. For the inductive step, we first use the inductive hypothesis to get
\[
\begin{aligned}
  \|\gamma^{(N)}(t_k)\|_{H^1\Sp^1}
  &\le \|\gamma^{(N)}(t_k)-\gamma(t_k)\|_{H^1\Sp^1} + \|\gamma(t_k)\|_{H^1\Sp^1}\\
  &\le 2^k\|\gamma_0^{(N)}-\gamma_0\|_{H^1\Sp^1} + \Lambda\\
  &\le 1+ \Lambda\;\le\;\delta(\tau);
\end{aligned}
\]
where we also used the assumption \eqref{eq:n0eps}.
Thus,  both $\gamma$ and $\gamma^{(N)}$ on $[t_k,t_{k+1}]$ have data of $H^1\Sp^1$ size no more than $\delta(\tau)$ and the Lipschitz stability~\eqref{eq:lwp_stab} of Proposition~\ref{prop:local} applies. Since the $X_\tau$ norm is the sup over $[t_k,t_{k+1}],$ this gives
\[
  \sup_{t\in[t_k,t_{k+1}]}\|\gamma^{(N)}(t)-\gamma(t)\|_{H^1\Sp^1}\;\le\; 2\,\|\gamma^{(N)}(t_k)-\gamma(t_k)\|_{H^1\Sp^1};
\]
evaluating at $t=t_{k+1}$ closes the induction. 

Combining with \eqref{eq:n0eps}, \eqref{eq:imdef3} follows. The embeddings $H^1\Sp^1\hookrightarrow\Sp^1\hookrightarrow\Sp^2\hookrightarrow\Sp^\infty$ then yield the convergences in all Schatten classes. 

\smallskip
\noindent\textbf{Step 2: Clause (a).} By Lemma~\ref{lem:finite_NLS}, $\gamma^{(N)}(t)=\gamma^{(N)}(t)^*\ge 0$ for every $t.$ By Step~1, $\gamma^{(N)}\to\gamma$ in $C([0,T];\Sp^\infty),$ and both self-adjointness and non-negativity are closed under $\Sp^\infty$ convergence. Hence $\gamma(t)=\gamma(t)^*\ge 0$ on $[0,T].$

\smallskip
\noindent\textbf{Step 3: Clause (b).}

\smallskip
\noindent\emph{Conservation laws at the approximant level.} Lemma~\ref{lem:finite_NLS}, eq.~\eqref{eq:finiteNLS_conservation}, gives
\begin{gather*}
  \|\gamma^{(N)}(t)\|_{\Sp^1}\;=\;\|\gamma_0^{(N)}\|_{\Sp^1},\qquad
  \|\gamma^{(N)}(t)\|_{\Sp^2}\;=\;\|\gamma_0^{(N)}\|_{\Sp^2},\\
  E\bigl(\gamma^{(N)}(t)\bigr)\;=\;E\bigl(\gamma_0^{(N)}\bigr).
\end{gather*}

\smallskip
\noindent\emph{Passage to the limit.} By Step~1, $\gamma^{(N)}\to\gamma$ in $C([0,T];H^1\Sp^1),$ hence in $C([0,T];\Sp^1)$ and $C([0,T];\Sp^2).$ Proposition~\ref{prop:galerkin}(iii)--(iv) gives $\gamma_0^{(N)}\to\gamma_0$ in $H^1\Sp^1,$ hence in $\Sp^1$ and $\Sp^2.$ Passing to the limit in the first two identities gives $\|\gamma(t)\|_{\Sp^1}=\|\gamma_0\|_{\Sp^1}=:\mathcal{M}$ and $\|\gamma(t)\|_{\Sp^2}=\|\gamma_0\|_{\Sp^2}.$ For the energy, $\gamma\mapsto E(\gamma)$ is continuous on $H^1\Sp^1$: $\gamma\mapsto\tr(-\Delta\gamma)$ is bounded linear $H^1\Sp^1\to\R$ via~\eqref{eq:H1S1_split}, and $\gamma\mapsto\rho_\gamma$ is bounded linear $H^1\Sp^1\to L^2$ by Lemma~\ref{lem:trace} at $s=1$ (cf.\ Lemma~\ref{lem:E_finite}); hence $E(\gamma^{(N)}(t))\to E(\gamma(t))$ and $E(\gamma_0^{(N)})\to E(\gamma_0),$ giving $E(\gamma(t))=E(\gamma_0)=:E_0.$ This proves~\eqref{eq:lfs_conservation}.

\smallskip
\noindent\emph{A priori $H^1\Sp^1$ bound.} By Lemma~\ref{lem:finite_NLS}, eq.~\eqref{eq:Ybar_finiteNLS_stmt} applied to $(\gamma_0^{(N)},p,q,|\T|),$
\[
\begin{aligned}
  \mathcal{M}^{(N)}+\mathcal{K}^{(N)}(t)
  &\;\le\;\bar Y(\|\gamma_0^{(N)}\|_{H^1\Sp^1},p,q,|\T|)\\
  &\;\le\;\bar Y(\|\gamma_0\|_{H^1\Sp^1},p,q,|\T|)\;=\;\bar Y
  \qquad\text{for every }t\in[0,T],
\end{aligned}
\]
where $\mathcal{M}^{(N)}:=\|\gamma_0^{(N)}\|_{\Sp^1}$ and $\mathcal{K}^{(N)}(t):=\tr(-\Delta\gamma^{(N)}(t)),$ and the second inequality uses monotonicity of $\bar Y$ in its first argument together with $\|\gamma_0^{(N)}\|_{H^1\Sp^1}\le\|\gamma_0\|_{H^1\Sp^1}$ (Proposition~\ref{prop:galerkin}(iii)). The bound is uniform in $N.$ Step~1 gives $\gamma^{(N)}(t)\to\gamma(t)$ uniformly in $t$ in $H^1\Sp^1,$ so $\mathcal{M}^{(N)}\to\mathcal{M}$ and $\mathcal{K}^{(N)}(t)\to\mathcal{K}(t)$ uniformly in $t.$ Passing to the limit yields $\mathcal{M}+\mathcal{K}(t)\le\bar Y;$ and $\mathcal{M}+\mathcal{K}(t)=\|\gamma(t)\|_{H^1\Sp^1}$ by~\eqref{eq:H1S1_split} together with non-negativity from (a).

\smallskip
\noindent\textbf{Step 4: Clause (c).} Suppose $\gamma_0\in H^{s+2}\Sp^1$ for some $s\ge 0.$

\smallskip
\noindent\emph{Uniform bound on $\{\gamma^{(N)}\}.$} By Proposition~\ref{prop:galerkin}(iii)--(iv), $\gamma_0^{(N)}\to\gamma_0$ in $H^{s+2}\Sp^1$ with $\|\gamma_0^{(N)}\|_{H^{s+2}\Sp^1}\le\|\gamma_0\|_{H^{s+2}\Sp^1}.$ Lemma~\ref{lem:finite_NLS}, Step~5, eq.~\eqref{eq:Sigma_growth} translated by the spectral identity, together with the monotone majorant $\bar Y(\|\gamma_0^{(N)}\|_{H^1\Sp^1})\le\bar Y$ used in Step~3, gives
\begin{equation}\label{eq:lfs_Hs_uniform}
  \|\gamma^{(N)}(t)\|_{H^{s+2}\Sp^1}\;\le\;\|\gamma_0^{(N)}\|_{H^{s+2}\Sp^1}\,e^{C_s|q|\,\bar Y\,|t|}\;\le\;\|\gamma_0\|_{H^{s+2}\Sp^1}\,e^{C_s|q|\,\bar Y\,|t|},
\end{equation}
uniform in $N.$ In particular $\sup_{N\ge N_0,\,t\in[0,T]}\|\gamma^{(N)}(t)\|_{H^{s+2}\Sp^1}<\infty.$

\smallskip
\noindent\emph{$L^\infty_t H^{s+2}\Sp^1$ bound on $\gamma.$} Step~1 gives $\gamma^{(N)}(t)\to\gamma(t)$ in $\Sp^\infty$ uniformly in $t.$ The map $A\mapsto\|A\|_{H^{s+2}\Sp^1}$ is lower semicontinuous under $\Sp^\infty$ convergence: this is the Schatten-class analog of Lemma~\ref{lem:LSC}, obtained by writing $\|A\|_{H^{s+2}\Sp^1}=\|\langle D\rangle^{s+2}A\langle D\rangle^{s+2}\|_{\Sp^1}$ and using the lower semicontinuity of the $\Sp^1$ norm under Banach--Alaoglu weak-$*$ convergence (cf.\ \cite[Thm.~2.20]{Simon2005}). Applied to~\eqref{eq:lfs_Hs_uniform},
\begin{equation}\label{eq:lfs_Hs_pointwise}
  \|\gamma(t)\|_{H^{s+2}\Sp^1}\;\le\;\liminf_{N\to\infty}\|\gamma^{(N)}(t)\|_{H^{s+2}\Sp^1}\;\le\;\|\gamma_0\|_{H^{s+2}\Sp^1}\,e^{C_s|q|\,\bar Y\,|t|},
\end{equation}
which is~\eqref{eq:lfs_Hs_bound}.

\smallskip
\noindent\emph{Continuity $\gamma\in C([0,T];H^{s+2}\Sp^1).$} $\gamma$ satisfies the operator Duhamel equation~\eqref{eq:Duhamel}. The free flow $S(t)\gamma_0$ is continuous in $t$ in $H^{s+2}\Sp^1$ (the free flow is an isometric group on every Schatten--Sobolev space). The bilinear bound (Proposition~\ref{prop:bilinear} at $\sigma=s+2$) and~\eqref{eq:lfs_Hs_pointwise} give
\[
  \sup_{\tau\in[0,T]}\|[V_{\rho_\gamma(\tau)},\gamma(\tau)]\|_{H^{s+2}\Sp^1}\;\le\;C\sup_{\tau\in[0,T]}\|\gamma(\tau)\|_{H^{s+2}\Sp^1}^2\;<\;\infty,
\]
so $\int_0^t S(t-\tau)[V_{\rho_\gamma(\tau)},\gamma(\tau)]\,d\tau$ is continuous in $t$ as an $H^{s+2}\Sp^1$-valued Bochner integral. Hence $\gamma\in C([0,T];H^{s+2}\Sp^1).$

\smallskip
\noindent\emph{$C^1$ part.} The Alber equation~\eqref{eq:Alber_op} reads $i\partial_t\gamma=p[\Delta,\gamma]+q[V_{\rho_\gamma},\gamma].$ The kinetic term lies in $H^s\Sp^1$ via the boundedness of $\Delta:H^{s+2}\Sp^1\to H^s\Sp^1,$ and the potential term lies in $H^{s+2}\Sp^1\hookrightarrow H^s\Sp^1$ by Proposition~\ref{prop:bilinear} at $\sigma=s+2.$ Both are continuous in $t$ in $H^s\Sp^1$ (using continuity of $\gamma$ in $H^{s+2}\Sp^1$ and continuity of the bilinear nonlinearity), so $\partial_t\gamma\in C([0,T];H^s\Sp^1),$ i.e.\ $\gamma\in C^1([0,T];H^s\Sp^1).$
\end{proof}



\section{Proof of Theorem~\ref{thm:main}}\label{sec:proof_thms}

The argument is sign-agnostic: $pq>0$ vs $pq<0$ enters only through the closed form of $\bar Y$ in Remark~\ref{rem:barY}. We combine the local well-posedness of Proposition~\ref{prop:local} with the uniform a priori $H^1\Sp^1$ bound of Lemma~\ref{lem:finite_system}(b) to extend the solution for all time. Moreover, we obtain clause (b) of Theorem~\ref{thm:main} from Lemma~\ref{lem:finite_system}(c).

\medskip

\noindent Let $\bar Y=\bar Y(\|\gamma_0\|_{H^1\Sp^1},p,q,|\T|)$ be the uniform a priori $H^1\Sp^1$ bound of Lemma~\ref{lem:finite_system}(b), reported in Remark~\ref{rem:barY}. Set $T:=T(\bar Y)=1/(8C\bar Y)$ (Remark~\ref{rem:delta_T}).

\medskip

\noindent\textbf{Step 1: Solve on $[0,T]$.} Since $\|\gamma_0\|_{H^1\Sp^1}\le\bar Y$ and $T=T(\bar Y),$ Proposition~\ref{prop:local} yields a unique $\gamma\in C([0,T];H^1\Sp^1)$ solving~\eqref{eq:Alber_op} on $[0,T].$

\medskip

\noindent\textbf{Step 2: Apply the a priori bound.} Lemma~\ref{lem:finite_system}(b) gives $\|\gamma(t)\|_{H^1\Sp^1}\le\bar Y$ for every $t\in[0,T].$

\medskip

\noindent\textbf{Step 3: Restart and concatenate.} Since $\|\gamma(T)\|_{H^1\Sp^1}\le\bar Y$ and (by Lemma~\ref{lem:finite_system}(a)) $\gamma(T)$ is self-adjoint and non-negative, Proposition~\ref{prop:local} with $\delta=\bar Y$ and the same $T=T(\bar Y)=1/(8C\bar Y)$ applied to datum $\gamma(T)$ yields a unique $\tilde\gamma\in C([T,2T];H^1\Sp^1)$ with $\tilde\gamma(T)=\gamma(T).$ The concatenation satisfies~\eqref{eq:Duhamel} on $[0,2T]$ with datum $\gamma_0,$ and self-adjointness and non-negativity are propagated across the join by Lemma~\ref{lem:finite_system}(a) applied to the concatenated solution.

Crucially, Lemma~\ref{lem:finite_system}(b) applied to the concatenated solution on $[0,2T]$ gives $\|\gamma(t)\|_{H^1\Sp^1}\le\bar Y$ for all $t\in[0,2T]$ with the \emph{same} $\bar Y,$ determined by $(\|\gamma_0\|_{H^1\Sp^1},p,q,|\T|)$ and not by $\gamma(T).$ Hence $\|\gamma(2T)\|_{H^1\Sp^1}\le\bar Y$ and the argument may be iterated. By induction, the solution exists on $[0,nT]$ for every $n\in\mathbb{N},$ i.e.\ on $[0,\infty).$ The argument on $(-\infty,0]$ is identical by time reversal.

\medskip

\noindent\textbf{Step 4: Uniqueness.} For any $\tilde\gamma\in C(\R;H^1\Sp^1)$ with $\tilde\gamma(0)=\gamma_0,$ continuity at $t=0$ places $\tilde\gamma$ in the uniqueness ball of Proposition~\ref{prop:local} on a sufficiently short initial interval; iterating in time (and on the negative axis) yields $\tilde\gamma=\gamma$ on $\R.$

\medskip

\noindent\textbf{Step 5: Higher Sobolev regularity (clause (b)).} Assume in addition $\gamma_0\in H^s\Sp^1$ for some $s\ge 2.$ Setting $s'=s-2\ge 0,$ Lemma~\ref{lem:finite_system}(c) applied with this $s'$ yields, on $[0,nT]$ for every $n\in\mathbb{N},$
\[
  \gamma\in C([0,nT];H^s\Sp^1)\cap C^1([0,nT];H^{s-2}\Sp^1),
\]
with the bound
\[
  \|\gamma(t)\|_{H^s\Sp^1}\;\le\;\|\gamma_0\|_{H^s\Sp^1}\,e^{C_{s'}|q|\,\bar Y\,|t|},
\]
and~\eqref{eq:Alber_op} holding in $H^{s-2}\Sp^1.$ Letting $n\to\infty$ and using the time-symmetric argument extends to $t\in\R,$ which proves clause (b).
\qed

\medskip

\section{On Penrose-stable backgrounds}\label{sec:stab}

We now consider the Alber equation around a nonzero homogeneous background $\Gamma$ on~$\T$. Theorem~\ref{thm:main} applies to $\Gamma$ and to perturbations $\Gamma+U_0$ in both the focusing and defocusing cases, so the datum-to-existence part of Theorem~\ref{thm:nlstab} is automatic. The Penrose-stability hypothesis only enters in the slow growth of $\|\gamma(t)-\Gamma\|_{H^1\Sp^1}.$

If $\Gamma$ is linearly stable in the sense of the Penrose condition (cf.\ Definition~\ref{def:penrose}), one can combine the a priori control of Theorem~\ref{thm:main} with a quantitative bound on the linearized propagator $e^{tL_\Gamma}$ (Proposition~\ref{prop:propagator} and Corollary~\ref{cor:poly_propagator}) to establish polynomial-in-$t$ control of the perturbation on the algebraic time window $|t|\le c_\Gamma\,\varepsilon^{-1/5}.$ A bootstrap argument closes the proof.

\subsection{Setup and linear stability}\label{subsec:setup_linstab}

Let $\Gamma$ be a self-adjoint, non-negative, translation-invariant operator on $L^2(\T)$ with Fourier symbol $\hat\Gamma(n)\ge 0$, $n\in\Z;$
equivalently, its integral kernel is
\begin{equation}
\label{eq:def_GammaFC}
	K_\Gamma(x,y) \;=\; \frac{1}{2\pi}\sum_{n\in\Z}\hat\Gamma(n)\,e^{in(x-y)},
\end{equation}
 The hypothesis $\Gamma\in H^1\Sp^1$ of Theorem~\ref{thm:nlstab} already ensures $\hat\Gamma\in\ell^1(\Z)$.
As mentioned in the Introduction, and can be checked directly, $\Gamma$ is a homogeneous solution of \eqref{eq:Alber_op}.

Now consider the perturbation $\gamma=\Gamma+U.$ The operator valued version of \eqref{eq:Alber1} is
\begin{equation}\label{eq:stab}
  i\partial_t U
  = \underbrace{p[\Delta,U]
  + q[V_{\rho_U},\Gamma]
  \vphantom{\big|}}_{L_\Gamma(U)}
  \;+\;
  \underbrace{q[V_{\rho_U},U]
  \vphantom{\big|}}_{Q(U)}.
\end{equation}
The quadratic part $Q(U)$ is the  nonlinear self-interaction, which can be controlled in $L^\infty_tH^1\Sp^1$ by Proposition~\ref{prop:bilinear}. Nonlinear stability of $\Gamma$ therefore reduces to bounding the propagator of the linear part $L_\Gamma$ in $H^1\Sp^1$. Dropping the quadratic part, we obtain the operator-valued linearized equation
\begin{equation}\label{eq:stab_lin}
  i\partial_t U
  = p[\Delta,U]
  + q[V_{\rho_U},\Gamma]
  \;=\; L_\Gamma(U),
\end{equation}
the operator-valued counterpart of~\eqref{eq:Alber1lin}.

On the Fourier side,~\eqref{eq:stab_lin} becomes
\begin{equation}\label{eq:stab_fourier}
  i\partial_t U_{mn}
  = -p(m^2-n^2)\,U_{mn}
  - \frac{q}{2\pi}\bigl(\hat\Gamma(m)
  -\hat\Gamma(n)\bigr)\,
  \hat\rho_U(m{-}n),
\end{equation}
where $\hat\rho_U(k) :=\sum_j U_{j+k,\,j}$ encodes the $k$-th diagonal of $U$ in the Fourier basis $e_n=(2\pi)^{-1/2}e^{inx},$ and equals $(2\pi)^{1/2}$ times the unitary Fourier coefficient of the density $\rho_U.$ The factor $(2\pi)^{-1}$ in the nonlinear term originates from $(V_f)_{mn}=(2\pi)^{-1}\hat\rho_U(m-n)$ in this basis. Solving~\eqref{eq:stab_fourier} by Duhamel and summing over the $k$-th diagonal ($m=j+k$, $n=j$) yields, for each $k\in\Z\setminus\{0\}$, a scalar Volterra equation:
\begin{equation}\label{eq:volterra}
  \hat\rho_U(k,t)
  = \hat\rho_U^{\mathrm{free}}(k,t)
  + \frac{iq}{2\pi}\int_0^t\Phi_k(t{-}s)\,
  \hat\rho_U(k,s)\,ds,
\end{equation}
with free density
\begin{equation}\label{eq:free_density}
  \hat\rho_U^{\mathrm{free}}(k,t)
  = \sum_{j\in\Z}
  e^{ipk(2j+k)t}\,U_{j+k,\,j}(0)
\end{equation}
and Volterra kernel
\begin{equation}\label{eq:volterra_kernel}
  \Phi_k(\tau)
  = \sum_{j\in\Z}
  \bigl(\hat\Gamma(j{+}k)
  -\hat\Gamma(j)\bigr)\,
  e^{ipk(2j+k)\tau}.
\end{equation}
For $k=0$, $\hat\rho_U(0)=\tr(U)$ is conserved. The crucial point is that equation~\eqref{eq:volterra} involves only the density $\hat\rho_U$, not the full operator $U$: the infinite-dimensional operator problem reduces to a countable family of scalar Volterra equations.

Now we have enough to state the following
\begin{definition}[Penrose stability
on $\T$]\label{def:penrose}
Let $\Gamma$ be a self-adjoint, non-negative, translation invariant operator on $L^2(\mathbb{T}),$ with $\Gamma\in H^1\Sp^1.$ Note that for such a $\Gamma,$ the singular values are exactly $\{|\hat\Gamma(n)|\}_{n\in\Z},$ so $\|\hat\Gamma\|_{\ell^1}=\|\Gamma\|_{\Sp^1}\le\|\Gamma\|_{H^1\Sp^1}$; in particular $\hat\Gamma\in\ell^1(\Z)$ automatically. The homogeneous background $\Gamma$ will be called \emph{Penrose-stable} with constant $\kappa>0$ if, for every $k\in\Z\setminus\{0\}$,
\begin{equation}\label{eq:penrose}
  \inf_{\operatorname{Re}\lambda> 0}
  \Bigl|1-\frac{iq}{2\pi}\,\tilde\Phi_k(\lambda)\Bigr|
  \ge\kappa,
\end{equation}
where $\tilde\Phi_k(\lambda) =\int_0^\infty e^{-\lambda\tau} \Phi_k(\tau)\,d\tau$ is the Laplace transform of the Volterra kernel~\eqref{eq:volterra_kernel}, defined for $\operatorname{Re}\lambda>0.$
\end{definition}

This is equivalent to the stability criterion of~\cite{AthanassoulisKyza2024}; the idea is that condition~\eqref{eq:penrose} ensures that no Fourier mode of the density can grow exponentially. On~$\mathbb{R},$ the Volterra kernel $\Phi_k$ is constructed from a compactly supported power spectrum, and its Laplace transform $\tilde\Phi_k$ can be expressed as a (rescaled) Hilbert transform of a compactly supported function with integral zero. This zero-integral property, together with compact support, implies that the boundary trace $\lim_{\eta\to 0^+}\tilde\Phi_k(\eta+is)$ is in $L^1_s(\mathbb{R})$~\cite{AAPS2020}; consequently, the Bromwich contour can be taken at $\operatorname{Re}\lambda=\eta\to 0^+$ and one obtains non-growth of the linearized solution.

On the torus, the Volterra kernel~\eqref{eq:volterra_kernel} is an almost-periodic function,  and the Laplace transform $\tilde\Phi_k(\lambda)$ has poles along the entire imaginary axis;  the boundary trace as $\operatorname{Re}\lambda\to 0^+$ genuinely is not an $L^1$  function. Thus, we cannot take the limit $\operatorname{Re}\lambda=\eta\to 0^+$ of the Bromwich contour.
Instead, we use a regularized Bromwich contour $\{\operatorname{Re}\lambda=\eta\}$ with $\eta>0,$ at the cost of a prefactor $C_\Gamma(\eta)e^{\eta t}$ in the propagator estimate (cf.\ Proposition~\ref{prop:propagator}). Optimizing this prefactor over $\eta>0$ \emph{pointwise in $t$} — with $\eta=\eta(t)\sim 1/\langle t\rangle$ — converts the $e^{\eta t}$ growth into the polynomial-in-$t$ propagator bound of Corollary~\ref{cor:poly_propagator}, which drives the algebraic time window in Theorem~\ref{thm:nlstab}.

\begin{remark}[An explicit Penrose-unstable background]
\label{rem:explicit_unstable}
We exhibit a $\Gamma\in H^1\Sp^1(\T)$ that fails the Penrose condition~\eqref{eq:penrose}. For simplicity take $p=q=1,$ and define $\Gamma$ by its Fourier coefficients
\begin{equation}\label{eq:cntrxpl}
  \hat\Gamma(0)=2\pi,\qquad\hat\Gamma(n)=0\text{ for }n\ne 0,
\end{equation}
i.e.\ $\Gamma$ is $2\pi$ times the rank-one projection onto the constant mode. Then $\|\Gamma\|_{H^1\Sp^1}=\sum_{n\in\Z}\langle n\rangle^2\hat\Gamma(n)=2\pi.$ For $k=1,$ the Volterra kernel~\eqref{eq:volterra_kernel} reduces to
\[
  \Phi_1(\tau)
  =\sum_{j\in\Z}\bigl(\hat\Gamma(j{+}1)-\hat\Gamma(j)\bigr)e^{ip(2j+1)\tau}
  = 2\pi(e^{-i\tau}-e^{i\tau})
  =-4\pi i\sin\tau,
\]
so, for $\operatorname{Re}\lambda>0,$
\[
  \tilde\Phi_1(\lambda)=\int_0^\infty e^{-\lambda\tau}\Phi_1(\tau)\,d\tau=-\frac{4\pi i}{\lambda^2+1},
\]
and hence
\[
  1-\frac{iq}{2\pi}\,\tilde\Phi_1(\lambda)
  =1-\frac{2}{\lambda^2+1}
  =\frac{\lambda^2-1}{\lambda^2+1}.
\]
This vanishes at $\lambda=\pm 1.$ The Volterra resolvent $\bigl(1-(iq/(2\pi))\tilde\Phi_1\bigr)^{-1}$ has a genuine pole at $\lambda=1,$ which produces a linearly unstable mode growing like $e^t.$ 
\end{remark}

\begin{proposition}[Propagator estimate]
\label{prop:propagator}
Let $\Gamma$ be Penrose-stable with $\Gamma\in H^1\Sp^1(\T).$ Then for every $\eta>0$ the linearized propagator satisfies
\begin{equation}\label{eq:propagator_bound}
  \|e^{tL_\Gamma}U_0\|_{H^1\Sp^1}
  \le C_\Gamma(\eta)\,e^{\eta|t|}\,
  \|U_0\|_{H^1\Sp^1},
  \qquad t\in\R.
\end{equation}

Moreover, the non-constant Fourier modes of the density of the linearized solution satisfy the weighted $L^2_t$ bound
\begin{equation}\label{eq:density_propagator}
  \Bigl(\sum_{k\ne 0}\langle k\rangle^2
  \int_0^\infty e^{-2\eta t}
  \bigl|\hat\rho_{e^{tL_\Gamma}U_0}(k,t)\bigr|^2\,dt
  \Bigr)^{1/2}
  \le \frac{C}{\kappa\sqrt{\eta}}\,
  \|U_0\|_{H^1\Sp^1}.
\end{equation}
\end{proposition}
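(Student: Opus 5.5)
Fix $\eta>0$ and work mode by mode in the Fourier basis $e_n$, treating $t\ge0$ (negative times follow by time reversal, since $\overline{\cdot}$ conjugation maps the problem to one of the same type). The plan is to solve the density first and then reconstruct $U$. For each $k\ne0$, the density $\hat\rho_U(k,\cdot)$ solves the Volterra equation~\eqref{eq:volterra}. We take Laplace transforms at $\operatorname{Re}\lambda=\eta$:
\[
\tilde\rho(k,\lambda)=\Bigl(1-\tfrac{iq}{2\pi}\tilde\Phi_k(\lambda)\Bigr)^{-1}\tilde\rho^{\mathrm{free}}(k,\lambda).
\]
By the Penrose condition~\eqref{eq:penrose}, the multiplier has modulus at most $\kappa^{-1}$ on the whole line $\operatorname{Re}\lambda=\eta$. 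Plancherel for the Laplace transform along $\operatorname{Re}\lambda=\eta$ (that is, Fourier in $t$ of $e^{-\eta t}\hat\rho$) then gives
\[
\int_0^\infty e^{-2\eta t}|\hat\rho_U(k,t)|^2dt\le\kappa^{-2}\int_0^\infty e^{-2\eta t}|\hat\rho^{\mathrm{free}}_U(k,t)|^2dt.
\]
To justify this we need a priori that $e^{-\eta' t}\hat\rho_U(k,t)\in L^2$ for some large $\eta'$. This follows from $|\Phi_k|\le 2\|\hat\Gamma\|_{\ell^1}$ and Gronwall, after which the contour is shifted down to $\eta$ using analyticity and the Penrose lower bound on $\operatorname{Re}\lambda>0$.

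\textbf{Step 2 (free density).} We need
\[
\sum_{k\ne0}\langle k\rangle^2\int_0^\infty e^{-2\eta t}|\hat\rho^{\mathrm{free}}(k,t)|^2dt\le C\eta^{-1}\|U_0\|_{H^1\Sp^1}^2.
\]
The crude route suffices: bound $|\hat\rho^{\mathrm{free}}(k,t)|\le\sum_j|U_{j+k,j}(0)|$. Writing the singular value decomposition of $A=\langle D\rangle U_0\langle D\rangle$, we have
\[
|U_{j+k,j}|\le\langle j+k\rangle^{-1}\langle j\rangle^{-1}|A_{j+k,j}|
\]
and
\[
\sum_k\langle k\rangle\sum_j\langle j+k\rangle^{-1}\langle j\rangle^{-1}|A_{j+k,j}|\le C\|A\|_{\Sp^1},
\]
using $\langle k\rangle\le C\langle j+k\rangle\langle j\rangle$ and $\sum_{m,n}|A_{mn}|$-type bounds. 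These are controlled by the Schatten estimate $\sum_{m,n}|\langle e_m,u\rangle||\langle v,e_n\rangle|\,\langle m\rangle^{-1}\langle n\rangle^{-1}\langle m-n\rangle$, which is summable via Cauchy--Schwarz, $\langle m-n\rangle\le\langle m\rangle\langle n\rangle$ being the delicate point. If the weight does not close here, we would instead use only $\sum_k\langle k\rangle^2\sup_t|\hat\rho^{\mathrm{free}}|^2$, which is bounded by $\|\rho_{S(t)U_0}\|_{H^1}^2\le C\|U_0\|_{H^1\Sp^1}^2$ uniformly in $t$ by Lemma~\ref{lem:trace} at $s=1$ applied to $S(t)U_0$. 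This gives the bound at once: $\int_0^\infty e^{-2\eta t}dt=(2\eta)^{-1}$, and combining with Step~1 yields~\eqref{eq:density_propagator}. This second route is the one I would actually use, since $\hat\rho^{\mathrm{free}}(k,t)$ is exactly the (rescaled) Fourier coefficient of $\rho_{S(t)U_0}$.

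\textbf{Step 3 (reconstructing $U$).} Duhamel for~\eqref{eq:stab_lin} gives
\[
U(t)=S(t)U_0-iq\int_0^tS(t-s)[V_{\rho_U(s)},\Gamma]\,ds.
\]
Since $\Gamma$ is diagonal, $[V_f,\Gamma]$ has entries $(2\pi)^{-1}\hat f(m-n)(\hat\Gamma(n)-\hat\Gamma(m))$, so we must estimate its $H^1\Sp^1$ norm in terms of $\|\rho_U\|_{H^1}$. Proposition~\ref{prop:bilinear} gives $\|[V_f,\Gamma]\|_{H^1\Sp^1}\le C\|f\|_{H^1}\|\Gamma\|_{H^1\Sp^1}$ (its proof uses only $\|f\|_{H^1}$). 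Hence
\[
\|U(t)\|_{H^1\Sp^1}\le\|U_0\|+C\|\Gamma\|_{H^1\Sp^1}\int_0^t\|\rho_U(s)\|_{H^1}ds.
\]
By Cauchy--Schwarz,
\[
\int_0^t\|\rho_U(s)\|_{H^1}\,ds\le e^{\eta t}\Bigl(\int_0^t e^{-2\eta s}\|\rho_U(s)\|_{H^1}^2ds\Bigr)^{1/2}\sqrt t .
\]
The $k=0$ mode is conserved and bounded by $\|U_0\|_{\Sp^1}$. Step~2 then gives
\[
\|U(t)\|_{H^1\Sp^1}\le\Bigl(1+C\kappa^{-1}\eta^{-1/2}\sqrt t\,e^{\eta t}\Bigr)\|U_0\|_{H^1\Sp^1},
\]
and absorbing $\sqrt t\le C\eta^{-1/2}e^{\eta t/2}$ after replacing $\eta$ by $\eta/2$ yields~\eqref{eq:propagator_bound} with $C_\Gamma(\eta)\sim C\|\Gamma\|_{H^1\Sp^1}/(\kappa\eta)$.

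\textbf{Main obstacle.} The main obstacle is the rigorous Paley--Wiener/contour-shift justification in Step~1. We need analyticity of $\tilde\Phi_k$ on $\operatorname{Re}\lambda>0$, which holds because $\Phi_k$ is bounded, together with the fact that the resolvent $(1-\tfrac{iq}{2\pi}\tilde\Phi_k)^{-1}$ is a bounded analytic multiplier on the half-plane. This makes $e^{-\eta t}\hat\rho_U$ the $L^2$ solution by uniqueness of Volterra solutions, and no boundary behaviour as $\eta\to0$ is needed.
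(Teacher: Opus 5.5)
Your argument is correct, apart from one misstatement in Step~2 noted below. It differs from the paper's proof in two places.

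\textbf{Free density.} The paper bounds $|\hat\rho^{\mathrm{free}}_U(k,t)|\le M_k$ uniformly in $t$. It then needs the separate summation estimate $\sum_k\langle k\rangle^2M_k^2\le C\|U_0\|_{H^1\Sp^1}^2$ of Lemma~\ref{lem:fourier_sum}. You instead use that $\hat\rho^{\mathrm{free}}_U(k,t)$ is exactly the (rescaled) $k$-th Fourier coefficient of $\rho_{S(t)U_0}$. Then Tonelli, Lemma~\ref{lem:trace} at $s=1$ and the isometry of $S(t)$ give $\sum_{k\ne0}\langle k\rangle^2\int_0^\infty e^{-2\eta t}|\hat\rho^{\mathrm{free}}_U(k,t)|^2\,dt\le C\|U_0\|_{H^1\Sp^1}^2/\eta$, with no new lemma.
\begin{itemize}
\item Put the supremum outside the $k$-sum. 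Lemma~\ref{lem:trace} controls $\sup_t\sum_k$, not $\sum_k\sup_t$ as you wrote. The latter is also finite, but only via Lemma~\ref{lem:fourier_sum}.
\item Drop your first, $\ell^1$-type route: it does not close. For $A=|u\rangle\langle e_0|$ one gets $\sum_{m,n}\langle m-n\rangle\langle m\rangle^{-1}\langle n\rangle^{-1}|A_{mn}|=\|u\|_{\ell^1}$, while $\|A\|_{\Sp^1}=\|u\|_{\ell^2}$. So the claimed bound fails for $u\in\ell^2\setminus\ell^1$.
\end{itemize}

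\textbf{Operator bound.} The paper first derives a pointwise-in-$t$ bound on $\|\rho_U(t)\|_{H^1}$ from the Volterra equation, which costs a factor $\eta^{-1}$. It then integrates in time, costing another factor, so its $C_\Gamma(\eta)$ in \eqref{eq:Cgamma_def} is of order $\eta^{-2}$. You apply Cauchy--Schwarz in time directly to the Duhamel integral against the weighted bound \eqref{eq:density_propagator}. Using $\int_0^t e^{2\eta s}\,ds\le e^{2\eta t}/(2\eta)$ in place of your $\sqrt t$ factor gives at once $C_\Gamma(\eta)=1+C|q|\,\|\Gamma\|_{H^1\Sp^1}/(\kappa\eta)$, with no rescaling of $\eta$.

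This is a genuine gain. With $\eta=1/\langle t\rangle$ it would replace $\langle t\rangle^2$ by $\langle t\rangle$ in Corollary~\ref{cor:poly_propagator}. The same bootstrap would then enlarge the window in Theorem~\ref{thm:nlstab} from $\varepsilon^{-1/5}$ to $\varepsilon^{-1/3}$. The paper's detour buys only the pointwise density bound \eqref{eq:rho_pointwise_final}, which the proposition itself does not need.

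The Paley--Wiener justification of the contour shift is the same in substance in both. Your phrasing via uniqueness of Volterra solutions is an acceptable variant of Lemma~\ref{lem:PW_shifted}.
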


\begin{remark}
The $k=0$ Fourier mode is conserved, $\hat\rho_{e^{tL_\Gamma}U_0}(0,t)=\tr U_0$, and plays no role in the nonlinear stability argument since $[V_c,\Gamma]=0$ for any constant~$c$.

The constant $C_\Gamma(\eta)$ diverges as $\eta\to 0^+,$ $\lim\limits_{\eta\to0^+} C_\Gamma(\eta) = +\infty,$ thus $\eta$ cannot be taken to tend to zero.
\end{remark}

\begin{proof}
We argue for $t\ge 0$; the case $t<0$ is identical by time reversal. The central object is the Fourier coefficient of the density, $\hat\rho_U(k,t)$, which for each $k\in\Z\setminus\{0\}$ satisfies the scalar Volterra equation
\begin{equation}\label{eq:volterra_proof}
  \hat\rho_U(k,t)
  = \hat\rho_U^{\mathrm{free}}(k,t)
  + \frac{iq}{2\pi}\int_0^t\Phi_k(t{-}s)\,
  \hat\rho_U(k,s)\,ds,
\end{equation}
with forcing bounded by
\begin{equation}\label{eq:Mk_def}
  |\hat\rho_U^{\mathrm{free}}(k,t)|
  \le M_k
  :=\sum_{j\in\Z}|U_{j+k,j}(0)|.
\end{equation}
The proof has four steps. Step~1 establishes a resolvent identity for the Laplace transform $\tilde{\hat\rho}_U(k,\lambda)$ on the right half-plane. Step~2 converts this into the weighted $L^2_t$ density bound~\eqref{eq:density_propagator} via Plancherel on the shifted contour $\operatorname{Re}\lambda=\eta$. Step~3 upgrades the $L^2_t$ bound to a pointwise-in-$t$ bound on $\|\rho_U(t)\|_{H^1}$. Step~4 closes the loop through Duhamel to obtain the operator bound~\eqref{eq:propagator_bound}. The technical inputs are the two properties of $\Phi_k$ (cf.\ Lemma~\ref{lem:kernel_L1}), the $L^2$ Paley--Wiener theorem on a shifted half-plane (cf.\ Lemma~\ref{lem:PW_shifted}), and the Fourier summation estimate (cf.\ Lemma~\ref{lem:fourier_sum}), all collected in Appendix~\ref{app:volterra}.

\medskip

\noindent\textbf{Step 1: Resolvent identity on $\{\operatorname{Re}\lambda>0\}$.} The pointwise bound $|\Phi_k(\tau)|\le 2\|\hat\Gamma\|_{\ell^1}$ from Lemma~\ref{lem:kernel_L1}(i), combined with~\eqref{eq:Mk_def}, gives via a standard Gr\"onwall argument on~\eqref{eq:volterra_proof} the a priori bound
\begin{equation}\label{eq:apriori_gronwall}
  |\hat\rho_U(k,t)|\le M_k\,e^{C_0 t},
  \qquad
  C_0:=\frac{|q|\,\|\hat\Gamma\|_{\ell^1}}{\pi}.
\end{equation}
In particular
\[
  \tilde{\hat\rho}_U(k,\lambda)
  :=\int_0^\infty e^{-\lambda t}
  \hat\rho_U(k,t)\,dt
\]
converges absolutely for $\operatorname{Re}\lambda>C_0$, and taking Laplace transforms in~\eqref{eq:volterra_proof} produces
\begin{equation}\label{eq:volterra_solved}
  \tilde{\hat\rho}_U(k,\lambda)
  = \frac{\tilde{\hat\rho}_U^{\mathrm{free}}
  (k,\lambda)}
  {1-(iq/(2\pi))\,\tilde\Phi_k(\lambda)}.
\end{equation}
Both sides of~\eqref{eq:volterra_solved} are analytic on the larger set $\{\operatorname{Re}\lambda>0\}$: the numerator by boundedness of $\hat\rho_U^{\mathrm{free}}(k,\cdot)$, the denominator by Lemma~\ref{lem:kernel_L1}(ii) and the Penrose lower bound~\eqref{eq:penrose} $|1-(iq/(2\pi))\tilde\Phi_k(\lambda)|\ge\kappa$. By analytic continuation, \eqref{eq:volterra_solved} holds on the whole right half-plane, and the left-hand side extends analytically from $\{\operatorname{Re}\lambda>C_0\}$ to $\{\operatorname{Re}\lambda>0\}$.

\medskip

\noindent\textbf{Step 2: Weighted $L^2_t$ bound on the position density.} Fix $\eta>0$. From the Penrose condition and~\eqref{eq:volterra_solved},
\begin{equation}\label{eq:F_bound_vertical}
  |\tilde{\hat\rho}_U(k,\sigma+is)|
  \le \kappa^{-1}\,
  |\tilde{\hat\rho}_U^{\mathrm{free}}
  (k,\sigma+is)|,
  \qquad
  \sigma>0,\ s\in\R.
\end{equation}
For the free density, fix $\sigma\ge\eta$ and set
\[
  g_\sigma(t)
  := e^{-\sigma t}\,
  \hat\rho_U^{\mathrm{free}}(k,t)\,
  \mathbf 1_{t>0}.
\]
By~\eqref{eq:Mk_def}, $|g_\sigma(t)|\le M_k\,e^{-\sigma t} \mathbf 1_{t>0}$, so $g_\sigma\in L^2(\R)$ with
\begin{equation}\label{eq:gsigma_L2}
  \|g_\sigma\|_{L^2(\R)}^2
  = \int_0^\infty e^{-2\sigma t}\,
  |\hat\rho_U^{\mathrm{free}}(k,t)|^2\,dt
  \le M_k^2\int_0^\infty e^{-2\sigma t}\,dt
  = \frac{M_k^2}{2\sigma}
  \le \frac{M_k^2}{2\eta}.
\end{equation}
The Fourier transform of $g_\sigma$ at frequency $s\in\R$ is
\[
  \widehat{g_\sigma}(s)
  = \int_\R e^{-ist}g_\sigma(t)\,dt
  = \int_0^\infty e^{-(\sigma+is)t}\,
  \hat\rho_U^{\mathrm{free}}(k,t)\,dt
  = \tilde{\hat\rho}_U^{\mathrm{free}}
  (k,\sigma+is),
\]
i.e.\ the vertical slice of the Laplace transform at $\operatorname{Re}\lambda=\sigma$. Plancherel $\|\widehat{g_\sigma}\|_{L^2(\R)}^2 =2\pi\|g_\sigma\|_{L^2(\R)}^2$, combined with~\eqref{eq:gsigma_L2}, gives
\begin{equation}\label{eq:free_L2_vertical}
  \frac{1}{2\pi}\int_\R
  |\tilde{\hat\rho}_U^{\mathrm{free}}
  (k,\sigma+is)|^2\,ds
  = \int_0^\infty e^{-2\sigma t}
  |\hat\rho_U^{\mathrm{free}}(k,t)|^2\,dt
  \le \frac{M_k^2}{2\eta}
\end{equation}
for every $\sigma\ge\eta$. Combining \eqref{eq:F_bound_vertical} and~\eqref{eq:free_L2_vertical},
\begin{equation}\label{eq:F_L2_vertical}
  \sup_{\sigma>\eta}
  \int_\R
  |\tilde{\hat\rho}_U(k,\sigma+is)|^2\,ds
  \le \frac{\pi M_k^2}{\kappa^2\eta}.
\end{equation}
Observe that the assumptions of Lemma~\ref{lem:PW_shifted} hold: the exponential-type bound~\eqref{eq:apriori_gronwall} ensures that the Laplace transform $\tilde{\hat\rho}_U(k,\cdot)$ of $\hat\rho_U(k,\cdot)$ converges absolutely and is analytic on $\{\operatorname{Re}\lambda>C_0\}$; it extends analytically to the larger half-plane $\{\operatorname{Re}\lambda>\eta\}$ by Step~1; and it satisfies the bound~\eqref{eq:F_L2_vertical}, which is uniform in $\sigma=\operatorname{Re}\lambda$ over $\sigma>\eta$. Lemma~\ref{lem:PW_shifted} now yields $e^{-\eta t}\hat\rho_U(k,t)\in L^2(\R_+)$ and, together with~\eqref{eq:F_L2_vertical},
\begin{equation}\label{eq:density_weighted_k}
  \int_0^\infty e^{-2\eta t}
  |\hat\rho_U(k,t)|^2\,dt
  = \frac{1}{2\pi}
  \int_\R|\tilde{\hat\rho}_U(k,\eta+is)|^2\,ds
  \le \frac{M_k^2}{2\kappa^2\eta}.
\end{equation}
Multiplying by $\langle k\rangle^2$, summing over $k\ne 0$, and invoking the Fourier summation estimate $\sum_k\langle k\rangle^2 M_k^2 \le C\|U_0\|_{H^1\Sp^1}^2$ from Lemma~\ref{lem:fourier_sum}, Fubini gives precisely
\begin{equation}\label{eq:density_weighted_final}
  \sum_{k\ne 0}\langle k\rangle^2
  \int_0^\infty e^{-2\eta t}
  |\hat\rho_U(k,t)|^2\,dt
  \le \frac{C}{\kappa^2\eta}\,
  \|U_0\|_{H^1\Sp^1}^2,
\end{equation}
which is~\eqref{eq:density_propagator}.

\medskip

\noindent\textbf{Step 3: Pointwise bound on $\|\rho_U(t)\|_{H^1}$.} We will use the $L^2_t$ bound of the position density that we showed in Step~2 to show a pointwise (i.e., $L^\infty_t$) bound for  $\|\rho_U(t)\|_{H^1};$ we will use the Volterra equation~\eqref{eq:volterra_proof} to do that.

For fixed $t\ge 0$, factor $e^{\eta t}$ out of the convolution:
\[
  \int_0^t\Phi_k(t{-}s)\,
  \hat\rho_U(k,s)\,ds
  = e^{\eta t}\!
  \int_0^t
  \bigl[e^{-\eta(t-s)}\Phi_k(t{-}s)\bigr]
  \bigl[e^{-\eta s}\hat\rho_U(k,s)\bigr]ds.
\]
We apply Cauchy--Schwarz in $s$ to the bracketed integral:
\begin{equation}\label{eq:CS_step3}
  \Bigl|\!\int_0^t
  e^{-\eta(t-s)}\Phi_k(t{-}s)\cdot
  e^{-\eta s}\hat\rho_U(k,s)\,ds\Bigr|
  \le A_k(t)\cdot B_k(t),
\end{equation}
where
\[
  A_k(t)
  := \Bigl(\int_0^t
  e^{-2\eta(t-s)}|\Phi_k(t-s)|^2\,ds\Bigr)^{\!1/2},
\]
\[
  B_k(t)
  := \Bigl(\int_0^t
  e^{-2\eta s}|\hat\rho_U(k,s)|^2\,ds\Bigr)^{\!1/2}.
\]
The first factor $A_k(t)$ is bounded by a pure kernel integral: changing variables $\tau=t-s$ and extending the domain to $[0,\infty)$,
\[
  A_k^2(t)
  = \int_0^t e^{-2\eta\tau}
  |\Phi_k(\tau)|^2\,d\tau
  \le \int_0^\infty e^{-2\eta\tau}
  |\Phi_k(\tau)|^2\,d\tau.
\]
Using the sup bound $|\Phi_k(\tau)|\le 2\|\hat\Gamma\|_{\ell^1}$ from Lemma~\ref{lem:kernel_L1}(i),
\[
  \int_0^\infty e^{-2\eta\tau}
  |\Phi_k(\tau)|^2\,d\tau
  \le (2\|\hat\Gamma\|_{\ell^1})^2
  \int_0^\infty e^{-2\eta\tau}d\tau
  = \frac{2\,\|\hat\Gamma\|_{\ell^1}^2}{\eta}.
\]
Hence
\begin{equation}\label{eq:Ak_bound}
  A_k(t)
  \le \frac{\sqrt{2}\,\|\hat\Gamma\|_{\ell^1}}
  {\sqrt{\eta}}.
\end{equation}
The second factor $B_k(t)$ is controlled by the weighted $L^2_t$ bound~\eqref{eq:density_weighted_k} established in Step~2: extending the domain from $[0,t]$ to $[0,\infty)$,
\begin{equation}\label{eq:Bk_bound}
  B_k(t)^2
  \le \int_0^\infty e^{-2\eta s}
  |\hat\rho_U(k,s)|^2\,ds
  \le \frac{M_k^2}{2\kappa^2\eta},
  \qquad\text{i.e.}\qquad
  B_k(t)\le \frac{M_k}{\kappa\sqrt{2\eta}}.
\end{equation}
Thus, multiplying~\eqref{eq:Ak_bound} and~\eqref{eq:Bk_bound} together, \eqref{eq:CS_step3} becomes
\[
  \Bigl|\!\int_0^t
  e^{-\eta(t-s)}\Phi_k(t{-}s)\cdot
  e^{-\eta s}\hat\rho_U(k,s)\,ds\Bigr|
  \le \frac{\sqrt{2}\,\|\hat\Gamma\|_{\ell^1}}
  {\sqrt{\eta}}\cdot
  \frac{M_k}{\kappa\sqrt{2\eta}}
  = \frac{\|\hat\Gamma\|_{\ell^1}\,M_k}
  {\kappa\,\eta}.
\]
Restoring the factor $e^{\eta t}$ from the convolution identity above and multiplying by $|q|/(2\pi)$ we essentially reconstitute \eqref{eq:volterra}, leading to
\begin{equation}\label{eq:density_pointwise_k}
  |\hat\rho_U(k,t)-\hat\rho_U^{\mathrm{free}}(k,t)|
  = \Bigl|\frac{|q|}{2\pi}
  \int_0^t\Phi_k(t{-}s)\,
  \hat\rho_U(k,s)\,ds\Bigr|
  \le \frac{|q|\,\|\hat\Gamma\|_{\ell^1}}
  {2\pi\,\kappa\,\eta}\,e^{\eta t}\,M_k.
\end{equation}
Since $|\hat\rho_U^{\mathrm{free}}(k,t)| \le M_k\le M_k\,e^{\eta t}$,
\[
  |\hat\rho_U(k,t)|
  \le \Bigl(1+\frac{|q|\,\|\hat\Gamma\|_{\ell^1}}
  {2\pi\,\kappa\eta}\Bigr)e^{\eta t}M_k.
\]
Multiplying by $\langle k\rangle$, summing over $k\ne 0$ via Cauchy--Schwarz and Lemma~\ref{lem:fourier_sum},
\begin{equation}\label{eq:rho_pointwise_final}
  \Bigl(\sum_{k\ne 0}\langle k\rangle^2
  |\hat\rho_U(k,t)|^2\Bigr)^{1/2}
  \le C\Bigl(1+\frac{|q|\,\|\hat\Gamma\|_{\ell^1}}
  {2\pi\,\kappa\eta}\Bigr)e^{\eta t}\,
  \|U_0\|_{H^1\Sp^1}.
\end{equation}

\medskip

\noindent\textbf{Step 4: Operator bound via Duhamel.} Duhamel applied to the linearized equation~\eqref{eq:stab_lin} gives
\[
  U(t) = S(t)U_0
  - iq\int_0^t S(t{-}s)\,
  [V_{\rho_U(s)},\Gamma]\,ds,
\]
where $S(t)$ of course is isometric on $H^1\Sp^1$. For any $f\in H^1(\T)$, writing $\langle D\rangle V_f\Gamma\langle D\rangle =(\langle D\rangle V_f\langle D\rangle^{-1}) \,(\langle D\rangle\Gamma\langle D\rangle)$ and applying the Schatten--H\"older inequality $\Sp^\infty\cdot\Sp^1\hookrightarrow\Sp^1$ together with Lemma~\ref{lem:conj} gives $\|V_f\Gamma\|_{H^1\Sp^1} \le C\|f\|_{H^1}\|\Gamma\|_{H^1\Sp^1}$. The reverse ordering $\|\Gamma V_f\|_{H^1\Sp^1}$ is identical by taking adjoints ($V_f^*=V_{\bar f}$, same $H^1$-norm). Combining via the triangle inequality,
\begin{equation}\label{eq:commbd_step4}
  \|[V_f,\Gamma]\|_{H^1\Sp^1}
  \le C\|f\|_{H^1}\|\Gamma\|_{H^1\Sp^1},
  \qquad f\in H^1(\T).
\end{equation}
Since the part of $f$ that corresponds to the zero Fourier mode, $V_{\hat f(0)}=\hat f(0)\cdot I$ commutes with $\Gamma$, it does not contribute to the commutator,
\begin{equation}\label{eq:comm_meanzero}
  [V_f,\Gamma] \;=\; [V_{f-\hat f(0)},\Gamma],
\end{equation}
so that applying~\eqref{eq:commbd_step4} to $f=\rho_{U(s)}-\hat\rho_{U(s)}(0)$  yields
\[
  \|U(t)\|_{H^1\Sp^1}
  \le \|U_0\|_{H^1\Sp^1}
  + C|q|\,\|\Gamma\|_{H^1\Sp^1}
  \int_0^t\Bigl(\sum_{k\ne 0}\langle k\rangle^2|\hat\rho_U(k,s)|^2\Bigr)^{1/2}\,ds.
\]
Inserting~\eqref{eq:rho_pointwise_final} and using $\int_0^t e^{\eta s}ds \le \eta^{-1}e^{\eta t}$ yields
\[
  \|U(t)\|_{H^1\Sp^1}
  \le \Bigl[1
  + \frac{C\,|q|\,
  \|\Gamma\|_{H^1\Sp^1}\,
  \bigl(1+|q|\|\hat\Gamma\|_{\ell^1}
  /(2\pi\kappa\eta)\bigr)}
  {\eta}\Bigr]\,e^{\eta t}\,
  \|U_0\|_{H^1\Sp^1}.
\]
Setting
\begin{equation}\label{eq:Cgamma_def}
  C_\Gamma(\eta)
  := 1
  + \frac{C\,|q|\,
  \|\Gamma\|_{H^1\Sp^1}\,
  \bigl(1+|q|\|\hat\Gamma\|_{\ell^1}
  /(2\pi\kappa\eta)\bigr)}
  {\eta}
\end{equation}
gives~\eqref{eq:propagator_bound}.
\end{proof}

\begin{corollary}[Polynomial-in-$t$ linear propagator bound]
\label{cor:poly_propagator}
Under the hypotheses of Proposition~\ref{prop:propagator}, the linearized propagator satisfies
\begin{equation}\label{eq:poly_propagator_bound}
  \|e^{tL_\Gamma}U_0\|_{H^1\Sp^1}
  \;\le\; C_\star(\Gamma)\,
  \langle t\rangle^2\,
  \|U_0\|_{H^1\Sp^1},
  \qquad t\in\R,
\end{equation}
with
\begin{equation}\label{eq:Cstar_def}
  C_\star(\Gamma)\;=\;3\,\bigl(1+A+AB\bigr),
  \qquad
  A=C\,|q|\,\|\Gamma\|_{H^1\Sp^1},
  \qquad
  B=\frac{|q|\,\|\hat\Gamma\|_{\ell^1}}{2\pi\,\kappa},
\end{equation}
where $A,B,C_\star$ depend only on $\Gamma,p,q,\kappa$ and $C$ is the absolute constant of~\eqref{eq:Cgamma_def}.
\end{corollary}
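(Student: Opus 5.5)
The plan is to optimize the bound of Proposition~\ref{prop:propagator} over $\eta>0$ pointwise in $t$. Writing the constant~\eqref{eq:Cgamma_def} with $A=C|q|\,\|\Gamma\|_{H^1\Sp^1}$ and $B=|q|\|\hat\Gamma\|_{\ell^1}/(2\pi\kappa)$ as in~\eqref{eq:Cstar_def}, we have
\[
  C_\Gamma(\eta)\;=\;1+\frac{A}{\eta}+\frac{AB}{\eta^{2}},
\]
so~\eqref{eq:propagator_bound} reads
\[
  \|e^{tL_\Gamma}U_0\|_{H^1\Sp^1}\le\Bigl(1+\frac{A}{\eta}+\frac{AB}{\eta^2}\Bigr)e^{\eta|t|}\|U_0\|_{H^1\Sp^1}.
\]
This holds for every $\eta>0$. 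The left-hand side does not depend on $\eta$, so for each fixed $t$ I may choose $\eta=\eta(t)$ freely.

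The natural choice is $\eta(t):=1/\langle t\rangle\in(0,1]$. Then $\eta|t|=|t|/\langle t\rangle\le 1$, so $e^{\eta|t|}\le e<3$. Moreover $1/\eta=\langle t\rangle\le\langle t\rangle^2$ and $1/\eta^2=\langle t\rangle^2$, and since $\langle t\rangle\ge1$,
\[
  1+A\langle t\rangle+AB\langle t\rangle^2\le(1+A+AB)\langle t\rangle^2.
\]
Combining these gives~\eqref{eq:poly_propagator_bound} with $C_\star(\Gamma)=3(1+A+AB)$, which depends only on $\Gamma,p,q,\kappa$. Negative times are covered automatically, because~\eqref{eq:propagator_bound} is stated for $t\in\R$ with $e^{\eta|t|}$.

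The only step that needs care is confirming that $C_\Gamma(\eta)$ has exactly the form $1+A\eta^{-1}+AB\eta^{-2}$ with the constant $C$ independent of $\eta$. I will read this directly off~\eqref{eq:Cgamma_def}. In that formula $C$ comes from Lemma~\ref{lem:conj} and Lemma~\ref{lem:fourier_sum}, and neither involves $\eta$. There is no genuine obstacle: the $\eta^{-2}$ term is what produces $\langle t\rangle^2$, and this is the source of the quadratic growth.
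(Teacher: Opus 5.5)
Your proposal is correct and matches the paper's proof: it takes $\eta=1/\langle t\rangle$ in Proposition~\ref{prop:propagator}, rewrites $C_\Gamma(\eta)=1+A\langle t\rangle+AB\langle t\rangle^2\le(1+A+AB)\langle t\rangle^2$, and uses $e^{|t|/\langle t\rangle}\le e<3$. Your check that $C_\Gamma(\eta)$ has the form $1+A\eta^{-1}+AB\eta^{-2}$ with $\eta$-independent constants is exactly what the paper reads off from~\eqref{eq:Cgamma_def}.
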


\begin{proof}
Fix $t\in\R$ and apply Proposition~\ref{prop:propagator} with the choice $\eta=1/\langle t\rangle>0$. By~\eqref{eq:Cgamma_def},
\[
  C_\Gamma(1/\langle t\rangle)
  \;=\; 1+A\langle t\rangle+AB\langle t\rangle^2
  \;\le\; \bigl(1+A+AB\bigr)\,\langle t\rangle^2,
\]
using $\langle t\rangle\ge 1$. Since $e^{\eta|t|}=e^{|t|/\langle t\rangle}\le e<3,$~\eqref{eq:propagator_bound} yields
\[
  \|e^{tL_\Gamma}U_0\|_{H^1\Sp^1}
  \le 3\bigl(1+A+AB\bigr)\,
  \langle t\rangle^2\,
  \|U_0\|_{H^1\Sp^1}.
\]
\end{proof}


\subsection{Proof of Theorem~\ref{thm:nlstab}}\label{subsec:proof_nlstab}

\noindent\textbf{Step 1: Global existence.} By hypothesis $\gamma_0=\Gamma+U_0\ge 0$ is self-adjoint with $\|\gamma_0\|_{H^1\Sp^1}\le\|\Gamma\|_{H^1\Sp^1}+\varepsilon.$ Theorem~\ref{thm:main} provides a unique global solution $\gamma\in C(\R;\,H^1\Sp^1),$ and the perturbation $U(t):=\gamma(t)-\Gamma\in C(\R;\,H^1\Sp^1)$ satisfies the perturbation equation in mild form
\begin{equation}\label{eq:nlstab_duhamel}
  U(t)\;=\; e^{tL_\Gamma}U_0
  \;+\;\int_0^t e^{(t-s)L_\Gamma}\,Q(U(s))\,ds,
  \qquad Q(U)\;=\; -iq\,[V_{\rho_U},U],
\end{equation}
where $L_\Gamma$ is the linear part of~\eqref{eq:stab} around $\Gamma$ and $Q$ is the quadratic remainder.

\medskip

\noindent\textbf{Step 2: Bootstrap ansatz.} By Corollary~\ref{cor:poly_propagator}, for every $V\in H^1\Sp^1$ and every $\tau\in\R,$
\begin{equation}\label{eq:poly_prop_inline}
  \|e^{\tau L_\Gamma}V\|_{H^1\Sp^1}
  \;\le\; C_\star\,\langle\tau\rangle^2\,
  \|V\|_{H^1\Sp^1},
\end{equation}
with $C_\star=C_\star(\Gamma)$ as in~\eqref{eq:Cstar_def}. By Proposition~\ref{prop:bilinear},
\begin{equation}\label{eq:Q_quad}
  \|Q(U)\|_{H^1\Sp^1}
  \;\le\; C_{\rm Q}\,\|U\|_{H^1\Sp^1}^2,
  \qquad
  C_{\rm Q}=C\,|q|,
\end{equation}
with the same absolute constant $C$ as in Proposition~\ref{prop:bilinear}. Consider the hypothesis
\begin{equation}\label{eq:bootstrap_ansatz}
  \|U(s)\|_{H^1\Sp^1}\;\le\; 2\,C_\star\,\langle s\rangle^2\,\varepsilon
  \qquad\text{for all }s\in[0,T],
\end{equation}
and let $T^\sharp\in(0,+\infty]$ be the supremum of $T\ge 0$ for which~\eqref{eq:bootstrap_ansatz} holds. Continuity of $U$ and $\|U(0)\|_{H^1\Sp^1}\le\varepsilon<2C_\star\varepsilon$ give $T^\sharp>0.$

\medskip

\noindent\textbf{Step 3: Closing the bootstrap.} Insert~\eqref{eq:poly_prop_inline} and~\eqref{eq:Q_quad} into~\eqref{eq:nlstab_duhamel}: for $t\in[0,T^\sharp),$
\begin{align}
  \|U(t)\|_{H^1\Sp^1}
  &\;\le\; C_\star\,\langle t\rangle^2\,\varepsilon
  + C_\star\,C_{\rm Q}
  \int_0^t\langle t-s\rangle^2\,
  \|U(s)\|_{H^1\Sp^1}^2\,ds \notag\\
  &\;\le\; C_\star\,\langle t\rangle^2\,\varepsilon
  + 4\,C_\star^{3}\,C_{\rm Q}\,\varepsilon^2
  \int_0^t\langle t-s\rangle^2\,\langle s\rangle^4\,ds.
  \label{eq:U_bootstrap_ineq}
\end{align}
For $t\ge 0$ the bracket integral is bounded by
\begin{equation}\label{eq:bracket_integral}
  \int_0^t\langle t-s\rangle^2\,\langle s\rangle^4\,ds
  \;\le\; \langle t\rangle^2\int_0^t\langle s\rangle^4\,ds
  \;\le\; \langle t\rangle^2\cdot\langle t\rangle^4\cdot t
  \;\le\; \langle t\rangle^7.
\end{equation}
Hence~\eqref{eq:U_bootstrap_ineq} gives
\[
  \|U(t)\|_{H^1\Sp^1}
  \;\le\; C_\star\,\langle t\rangle^2\,\varepsilon
  + 4\,C_\star^{3}\,C_{\rm Q}\,\varepsilon^2\,\langle t\rangle^7.
\]
The bootstrap~\eqref{eq:bootstrap_ansatz} is strictly improved to $\le(3/2)C_\star\,\langle t\rangle^2\,\varepsilon$ on the set where
\begin{equation}\label{eq:closure_condition}
  4\,C_\star^{3}\,C_{\rm Q}\,\varepsilon^2\,\langle t\rangle^7
  \;\le\; \tfrac12\,C_\star\,\langle t\rangle^2\,\varepsilon,
  \quad\text{i.e.}\quad
  \langle t\rangle^{5}
  \;\le\; \frac{1}{8\,C_\star^{2}\,C_{\rm Q}\,\varepsilon}.
\end{equation}
Setting
\begin{equation}\label{eq:cGamma_def}
  c_\Gamma
  \;:=\; \bigl(8\,C_\star^{2}\,C_{\rm Q}\bigr)^{-1/5},
\end{equation}
condition~\eqref{eq:closure_condition} is equivalent to $|t|\le c_\Gamma\,\varepsilon^{-1/5},$ which is $T_\star$ in~\eqref{eq:Tstar_def}. By continuity, $T^\sharp\ge T_\star,$ and~\eqref{eq:bootstrap_ansatz} holds throughout $[0,T_\star],$ which is~\eqref{eq:nlstab_bound}. The case $t\le 0$ is identical by time reversal. \qed


\appendix

\section{On the Volterra kernel and Penrose stability}
\label{app:volterra}

We collect the two properties of the Volterra kernel $\Phi_k$ defined in~\eqref{eq:volterra_kernel} that are used in the proof of Proposition~\ref{prop:propagator}: a pointwise bound in $t$, and an explicit series representation of its Laplace transform on $\{\operatorname{Re}\lambda>0\}$ (together with analyticity there).

On the torus, $\Phi_k$ is almost-periodic and generically \emph{not} in $L^1(\R_+)$, so the classical $L^1$ Volterra resolvent calculus 
does not apply directly; this is precisely why the proof of Proposition~\ref{prop:propagator} proceeds via Bromwich inversion on a shifted contour rather than through an $L^1$ resolvent kernel.

\begin{lemma}\label{lem:kernel_L1}
Let $\Gamma\in H^1\Sp^1(\T)$ be translation-invariant, and let $\Phi_k$ be the Volterra kernel~\eqref{eq:volterra_kernel}. Then for each $k\ne 0$:
\begin{enumerate}
\item[(i)] $\Phi_k\in L^\infty(\R_+)$
with
\begin{equation}\label{eq:phi_k_sup}
  \sup_{\tau\ge 0}|\Phi_k(\tau)|
  \le 2\|\hat\Gamma\|_{\ell^1}.
\end{equation}
\item[(ii)] The Laplace transform
$\tilde\Phi_k(\lambda) =\int_0^\infty e^{-\lambda\tau} \Phi_k(\tau)\,d\tau$ is analytic on $\{\operatorname{Re}\lambda>0\}$ and admits the absolutely convergent representation
\begin{equation}\label{eq:phi_k_laplace}
  \tilde\Phi_k(\lambda)
  = \sum_{j\in\Z}
  \frac{\hat\Gamma(j{+}k)-\hat\Gamma(j)}
  {\lambda-ipk(2j+k)},
  \qquad
  \operatorname{Re}\lambda>0.
\end{equation}
\end{enumerate}
Here $\hat\Gamma\in\ell^1(\Z)$ thanks to $\Gamma\in H^1\Sp^1(\T)\subset\Sp^1(\T)$ and translation invariance, which give $\|\hat\Gamma\|_{\ell^1} =\|\Gamma\|_{\Sp^1}$.
\end{lemma}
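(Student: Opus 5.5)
The proof will be a direct computation from the series definition~\eqref{eq:volterra_kernel}, using only that $\hat\Gamma\in\ell^1(\Z)$. The first task is to record why this holds. For translation-invariant $\Gamma=\Gamma^*\ge0$, the exponentials $e_n$ diagonalize $\Gamma$ with eigenvalues $\hat\Gamma(n)\ge 0$. Hence $\|\Gamma\|_{\Sp^1}=\sum_n\hat\Gamma(n)=\|\hat\Gamma\|_{\ell^1}$, which is finite since $\|\Gamma\|_{\Sp^1}\le\|\Gamma\|_{H^1\Sp^1}$. Without the sign assumption one would use $|\hat\Gamma(n)|$ as the singular values, and the same identity holds.

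For (i), I fix $k\ne0$ and $\tau\ge0$. Each term of~\eqref{eq:volterra_kernel} has modulus $|\hat\Gamma(j+k)-\hat\Gamma(j)|$, because $|e^{ipk(2j+k)\tau}|=1$ as $p,k,j,\tau$ are real. The triangle inequality and the shift invariance of the $\ell^1$ norm then give
\[
  |\Phi_k(\tau)|\le\sum_j|\hat\Gamma(j+k)|+\sum_j|\hat\Gamma(j)|=2\|\hat\Gamma\|_{\ell^1}.
\]
This bound shows that the series converges absolutely and uniformly in $\tau$. Since each term is continuous, $\Phi_k$ is a bounded continuous function, and~\eqref{eq:phi_k_sup} follows.

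For (ii), boundedness of $\Phi_k$ makes $\int_0^\infty e^{-\lambda\tau}\Phi_k(\tau)\,d\tau$ absolutely convergent for $\operatorname{Re}\lambda>0$. To obtain~\eqref{eq:phi_k_laplace}, I interchange sum and integral via Fubini--Tonelli. This is justified because
\[
  \sum_j\int_0^\infty e^{-\operatorname{Re}\lambda\,\tau}|\hat\Gamma(j+k)-\hat\Gamma(j)|\,d\tau\le\frac{2\|\hat\Gamma\|_{\ell^1}}{\operatorname{Re}\lambda}<\infty .
\]
Each term integrates to $\bigl(\hat\Gamma(j+k)-\hat\Gamma(j)\bigr)/\bigl(\lambda-ipk(2j+k)\bigr)$, since $\operatorname{Re}(\lambda-ipk(2j+k))=\operatorname{Re}\lambda>0$. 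The resulting series converges absolutely, because $|\lambda-ipk(2j+k)|\ge\operatorname{Re}\lambda$.

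For analyticity, on any half-plane $\{\operatorname{Re}\lambda\ge\sigma_0\}$ with $\sigma_0>0$, each term of the series is holomorphic and bounded by $|\hat\Gamma(j+k)-\hat\Gamma(j)|/\sigma_0$, which is summable in $j$. By the Weierstrass M-test the series converges uniformly there, so the limit is holomorphic on $\{\operatorname{Re}\lambda>\sigma_0\}$ for every $\sigma_0>0$, hence on the whole right half-plane. Alternatively, one can differentiate under the integral sign, since $\tau e^{-\sigma_0\tau}|\Phi_k(\tau)|$ is integrable. I do not expect a real obstacle in any of this. The only point needing care is to justify each interchange of limits by the $\ell^1$ bound, rather than appealing to any decay of $\Phi_k$, which fails on the torus.
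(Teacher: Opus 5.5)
Your proposal is correct and follows essentially the same route as the paper. For (i), both use the triangle inequality together with shift invariance of the $\ell^1$ norm; for (ii), both integrate term by term, justified by the bound $2\|\hat\Gamma\|_{\ell^1}/\operatorname{Re}\lambda$, and then apply Weierstrass on $\{\operatorname{Re}\lambda\ge\eta\}$ to get analyticity. Your diagonalization argument for $\|\hat\Gamma\|_{\ell^1}=\|\Gamma\|_{\Sp^1}$ in the exponential basis supplies a justification that the paper only states.
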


\begin{proof}
From~\eqref{eq:volterra_kernel}, $|\Phi_k(\tau)| \le\sum_j|\hat\Gamma(j{+}k)-\hat\Gamma(j)| \le 2\|\hat\Gamma\|_{\ell^1}$, proving~(i).

For (ii), the integral $\int_0^\infty e^{-\lambda\tau} e^{ipk(2j+k)\tau}\,d\tau =1/(\lambda-ipk(2j+k))$ is valid for $\operatorname{Re}\lambda>0$, and
\[
  \sum_{j\in\Z}
  \frac{|\hat\Gamma(j{+}k)-\hat\Gamma(j)|}
  {|\lambda-ipk(2j+k)|}
  \le
  \frac{2\|\hat\Gamma\|_{\ell^1}}
  {\operatorname{Re}\lambda},
\]
so term-by-term integration is justified and the series converges absolutely, proving the representation~\eqref{eq:phi_k_laplace}. Each term is analytic on $\{\operatorname{Re}\lambda>0\}$ and the bound above is locally uniform on any set $\{\operatorname{Re}\lambda\ge\eta\}$ with $\eta>0$, so by Weierstrass the sum is analytic on $\{\operatorname{Re}\lambda>0\}$.
\end{proof}

\medskip

We next record the version of the $L^2$ Paley--Wiener theorem used in the proof of Proposition~\ref{prop:propagator}. It is the standard statement
 specialized to a shifted right half-plane and phrased in the form we need: 

\begin{lemma}[$L^2$ Paley--Wiener on
a shifted contour]
\label{lem:PW_shifted}
Let $0<\eta<C_0$ and let $f\colon\R_+\to\C$ be locally integrable with $e^{-C_0 t}f\in L^\infty(\R_+)$, so that its Laplace transform
\[
  \tilde f(\lambda)
  :=\int_0^\infty e^{-\lambda t}
  f(t)\,dt
\]
converges absolutely and is analytic on $\{\operatorname{Re}\lambda>C_0\}$. Suppose that $\tilde f$ extends analytically to $\{\operatorname{Re}\lambda>\eta\}$ with
\begin{equation}\label{eq:PW_hyp}
  M:=\sup_{\sigma>\eta}
  \int_\R|\tilde f(\sigma+is)|^2\,ds
  <\infty.
\end{equation}
Then $e^{-\eta t}f\in L^2(\R_+)$ with
\begin{equation}\label{eq:PW_conclusion}
  \int_0^\infty e^{-2\eta t}
  |f(t)|^2\,dt
  = \frac{1}{2\pi}
  \int_\R |\tilde f(\eta+is)|^2\,ds
  \le \frac{M}{2\pi}.
\end{equation}
\end{lemma}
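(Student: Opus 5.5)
The plan is to reduce the statement to the classical Paley--Wiener theorem for the right half-plane $\{\operatorname{Re}\mu>0\}$ by shifting the variable. Set $g(t):=e^{-\eta t}f(t)$ and $G(\mu):=\tilde f(\mu+\eta)$. Then $G$ is analytic on $\{\operatorname{Re}\mu>0\}$, and hypothesis~\eqref{eq:PW_hyp} says exactly that $G$ belongs to the Hardy space $H^2$ of the right half-plane with $\sup_{\sigma>0}\int_\R|G(\sigma+is)|^2\,ds=M$. By the Paley--Wiener theorem there is a unique $h\in L^2(\R_+)$ with $G(\mu)=\int_0^\infty e^{-\mu t}h(t)\,dt$ for $\operatorname{Re}\mu>0$. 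Moreover the boundary trace $G(is)=\lim_{\sigma\to0^+}G(\sigma+is)$ exists in $L^2(\R)$ and equals $\hat h(s)$, so Plancherel gives $\int_0^\infty|h|^2=\frac1{2\pi}\int_\R|G(is)|^2\,ds\le M/(2\pi)$. The inequality here follows from Fatou, or from monotonicity of the $L^2$ norms on vertical lines.

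The next step is to identify $h$ with $g$. For $\operatorname{Re}\mu>C_0-\eta$, both $\int_0^\infty e^{-\mu t}g(t)\,dt=\tilde f(\mu+\eta)$ (converging absolutely, since $e^{-C_0t}f\in L^\infty$) and $\int_0^\infty e^{-\mu t}h(t)\,dt$ equal $G(\mu)$. On a vertical line $\operatorname{Re}\mu=\sigma_1>C_0-\eta$, the functions $e^{-\sigma_1 t}g$ and $e^{-\sigma_1 t}h$ both lie in $L^1\cap L^2(\R_+)$ and have the same Fourier transform. Hence $g=h$ a.e., by injectivity of the Fourier transform on $L^1$. Therefore $e^{-\eta t}f\in L^2(\R_+)$, and the norm identity becomes
\[
\int_0^\infty e^{-2\eta t}|f|^2\,dt=\frac1{2\pi}\int_\R|\tilde f(\eta+is)|^2\,ds\le\frac{M}{2\pi}.
\]
The value of $\tilde f$ on the line $\operatorname{Re}\lambda=\eta$ is understood as the $L^2$ boundary trace. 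Since $g\in L^2$ does not a priori give absolute convergence on that line, I will note this interpretation explicitly.

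The main obstacle is the proof of Paley--Wiener itself, if one does not simply cite it (e.g.\ Rudin, \emph{Real and Complex Analysis}, Thm.~19.2, rotated to the right half-plane). Done by hand, I would proceed as follows.

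\textbf{Step 1.} For each $\sigma>0$, define $h_\sigma:=\mathcal F^{-1}[G(\sigma+i\cdot)]$.

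\textbf{Step 2.} Use Cauchy's theorem on rectangles, together with the uniform $L^2$ bound, to show that $e^{\sigma t}h_\sigma$ is independent of $\sigma$. The horizontal edge contributions vanish along a subsequence of heights, by an averaging argument using the $L^2$ bound in $s$.

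\textbf{Step 3.} Letting $\sigma\to\infty$, deduce that this common function vanishes for $t<0$.

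Since the paper calls this lemma ``standard'', I expect to cite the classical theorem and carry out only the shift and the identification argument above.
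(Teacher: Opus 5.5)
Your proposal is correct and follows the same route as the paper. The paper gives no separate proof and treats this as the classical $L^2$ Paley--Wiener theorem transported to the half-plane $\{\operatorname{Re}\lambda>\eta\}$; your shift ($g=e^{-\eta t}f$, $G(\mu)=\tilde f(\mu+\eta)$), the identification $h=g$ by Fourier injectivity on a line $\operatorname{Re}\mu>C_0-\eta$, and your reading of $\tilde f(\eta+is)$ as the $L^2$ boundary trace supply exactly the details the paper leaves implicit.
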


\medskip

Finally, the following elementary Fourier summation lemma converts the $\Sp^1$-trace summability of the initial data into the weighted $\ell^2$ bound needed to sum the mode-by-mode density estimates into an $H^1$-bound.

\begin{lemma}[Fourier summation of
trace coefficients]
\label{lem:fourier_sum}
For any $U_0\in H^1\Sp^1(\T)$ with matrix entries $U_{m,j}(0) =\langle e_m,U_0 e_j\rangle$ in the Fourier basis, the following bound holds
\begin{equation}\label{eq:fourier_sum_bound}
  \sum_{k\ne 0}\langle k\rangle^2
  \Bigl(\sum_{j\in\Z}
  |U_{j+k,j}(0)|\Bigr)^{\!2}
  \le C\,\|U_0\|_{H^1\Sp^1}^2.
\end{equation}
\end{lemma}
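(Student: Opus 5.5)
The approach is to strip the Sobolev weights off the matrix entries, spend them to absorb the factor $\langle k\rangle$, and then control the diagonal sums by a singular-value decomposition together with Young's inequality on $\Z$. Set $A:=\langle D\rangle U_0\langle D\rangle\in\Sp^1$, so that $\|A\|_{\Sp^1}=\|U_0\|_{H^1\Sp^1}$. In the Fourier basis $e_n$, $\langle D\rangle$ is diagonal with entries $\langle n\rangle$. Hence
\[
U_{m,n}(0)=\frac{A_{m,n}}{\langle m\rangle\langle n\rangle},
\qquad
A_{m,n}:=\langle e_m,Ae_n\rangle .
\]

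\textbf{Step 1 (spending the weights).} I will show that
\[
\frac{\langle k\rangle}{\langle j+k\rangle\langle j\rangle}\le 2\Bigl(\frac{1}{\langle j\rangle}+\frac{1}{\langle j+k\rangle}\Bigr).
\]
This follows from $\langle k\rangle\le\langle j+k\rangle+\langle j\rangle$: dividing by $\langle j+k\rangle\langle j\rangle$ gives the bound even with constant $1$. Consequently
\[
\langle k\rangle\sum_j|U_{j+k,j}(0)|\le 2\sum_j|A_{j+k,j}|\,\frac{1}{\langle j\rangle}+2\sum_j|A_{j+k,j}|\,\frac{1}{\langle j+k\rangle},
\]
and it suffices to bound the $\ell^2_k$ norm of each of the two sums on the right by $C\|A\|_{\Sp^1}$.

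\textbf{Step 2 (singular values).} Write $A=\sum_n s_n|u_n\rangle\langle v_n|$ with $\{u_n\},\{v_n\}$ orthonormal and $\sum_n s_n=\|A\|_{\Sp^1}$. Denote the Fourier coefficients by $a_n(m)=\langle e_m,u_n\rangle$ and $b_n(j)=\langle e_j,v_n\rangle$, both unit vectors in $\ell^2(\Z)$. Then $|A_{j+k,j}|\le\sum_n s_n|a_n(j+k)||b_n(j)|$, and the first sum is dominated by
\[
\sum_n s_n\,\bigl(|a_n|\ast \widetilde w_n\bigr)(k),
\qquad
w_n(j):=\frac{|b_n(j)|}{\langle j\rangle},
\]
where $\widetilde w_n$ is $w_n$ reflected, i.e.\ the sum is a correlation in $j$.

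\textbf{Step 3 (Young and Minkowski).} By Cauchy--Schwarz,
\[
\|w_n\|_{\ell^1}\le\|b_n\|_{\ell^2}\,\bigl\|\langle\cdot\rangle^{-1}\bigr\|_{\ell^2}=:C_1<\infty .
\]
Young's inequality $\ell^2\ast\ell^1\to\ell^2$ then gives $\bigl\||a_n|\ast\widetilde w_n\bigr\|_{\ell^2_k}\le C_1$. Minkowski's inequality in $n$ bounds the $\ell^2_k$ norm of the first sum by $C_1\sum_n s_n=C_1\|A\|_{\Sp^1}$. The second sum is handled symmetrically, putting the weight $\langle j+k\rangle^{-1}$ on $a_n$ instead. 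Squaring and restricting to $k\neq 0$ yields the claimed bound with $C=16C_1^2$.

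\textbf{Main obstacle.} The point needing care is not to spend both weights on $\langle k\rangle$. Bounding $\langle k\rangle\le 2\langle j+k\rangle\langle j\rangle$ leaves only $\sup_k\sum_j|A_{j+k,j}|\le\|A\|_{\Sp^1}$, and convolving two $\ell^2$ sequences does not give $\ell^2$. Step 1 avoids this by keeping one factor $\langle\cdot\rangle^{-1}$, which makes one convolution factor summable.
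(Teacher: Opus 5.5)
Your proof is correct, but it takes a genuinely different route from the paper's. The paper never decomposes $A=\langle D\rangle U_0\langle D\rangle$. It applies a single Cauchy--Schwarz in $j$ with weight $\langle j+k\rangle\langle j\rangle$:
\[
\Bigl(\sum_j|U_{j+k,j}(0)|\Bigr)^2\le\Bigl(\sum_j\langle j+k\rangle^2\langle j\rangle^2|U_{j+k,j}(0)|^2\Bigr)\Bigl(\sum_j\frac{1}{\langle j+k\rangle^2\langle j\rangle^2}\Bigr).
\]
It then shows the lattice sum is at most $C\langle k\rangle^{-2}$ by splitting at $|j|\lessgtr|k|/2$. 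This is the same ``one of $\langle j\rangle,\langle j+k\rangle$ is $\gtrsim\langle k\rangle$'' dichotomy as your Step~1, but used at the level of squares. Summing over $k$ and relabelling $m=j+k$ gives $\sum_{m,j}\langle m\rangle^2\langle j\rangle^2|U_{m,j}(0)|^2=\|A\|_{\Sp^2}^2$.

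So the paper actually proves the stronger inequality with the Hilbert--Schmidt norm $\|\langle D\rangle U_0\langle D\rangle\|_{\Sp^2}$ on the right, and uses $\Sp^1\hookrightarrow\Sp^2$ only at the very end. Your argument, by contrast, genuinely needs trace class. The Minkowski step over the singular-value expansion requires $\sum_n s_n<\infty$, so it cannot produce the $\Sp^2$ version. It also uses more machinery (singular value decomposition, Young, Minkowski) to reach a weaker conclusion.

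The ``main obstacle'' you flag is real for the route of spending both weights pointwise and then summing diagonals of $A$. It disappears once the weights are spent inside Cauchy--Schwarz, because the product weight $\langle j+k\rangle^{-1}\langle j\rangle^{-1}$ is already square-summable in $j$ with exactly the $\langle k\rangle^{-1}$ decay needed. No convolution estimate is then required.

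Your approach is still a clean, self-contained alternative. It has the merit of making the structure transparent: each rank-one piece of $A$ contributes a correlation of Fourier-coefficient sequences, one of which has been made $\ell^1$ by a single derivative weight. For this lemma, though, the paper's one-line Cauchy--Schwarz is both shorter and sharper.
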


\begin{proof}
Apply Cauchy--Schwarz in $j$ with weight $1/(\langle j\rangle\langle j+k\rangle)$:
\begin{equation}\label{eq:fc112}
  \Bigl(\sum_j |U_{j+k,j}(0)|\Bigr)^{\!2}
  \le \Bigl(\sum_j \langle j+k\rangle^2
  \langle j\rangle^2|U_{j+k,j}(0)|^2\Bigr)\,
  \Bigl(\sum_j \frac{1}{\langle j+k\rangle^2
  \langle j\rangle^2}\Bigr).
\end{equation}
Split at $|j|\lessgtr|k|/2.$ On $|j|\le|k|/2,$ $|j+k|\ge|k|/2$ gives $1/\langle j+k\rangle\le2/\langle k\rangle,$ so
\[
  \sum_{|j|\le|k|/2}\frac{1}{\langle j+k\rangle^2\,\langle j\rangle^2}
  \;\le\; \frac{4}{\langle k\rangle^2}\sum_{j\in\Z}\frac{1}{\langle j\rangle^2}
  \;=\; \frac{C}{\langle k\rangle^2}.
\]
Symmetrically, on $|j|>|k|/2,$ we have $1/\langle j\rangle\le2/\langle k\rangle,$ so
\[
  \sum_{|j|>|k|/2}\frac{1}{\langle j+k\rangle^2\,\langle j\rangle^2}
  \;\le\; \frac{4}{\langle k\rangle^2}\sum_{j\in\Z}\frac{1}{\langle j+k\rangle^2}
  \;=\;\frac{4}{\langle k\rangle^2}\sum_{m\in\Z}\frac{1}{\langle m\rangle^2}
  \;=\; \frac{C}{\langle k\rangle^2},
\]
Thus finally (changing the value of $C$)
\[
  \sum_{j\in\Z}\frac{1}{\langle j+k\rangle^2\,\langle j\rangle^2}
  \;\le\; \frac{C}{\langle k\rangle^2},
\]
so \eqref{eq:fc112} yields 
\[
\langle k\rangle^2\bigl(\sum_j|U_{j+k,j}(0)|\bigr)^2 \le C\sum_j\langle j+k\rangle^2\langle j\rangle^2|U_{j+k,j}(0)|^2.
\]
Summing over $k$ and relabeling $m=j+k$:
\begin{align*}
  \sum_{k\ne 0}
  \langle k\rangle^2
  \Bigl(\sum_j|U_{j+k,j}(0)|\Bigr)^{\!2}
  &\le C\sum_{m,j}\langle m\rangle^2
  \langle j\rangle^2|U_{m,j}(0)|^2\\
  &= C\,\|\langle D\rangle U_0\langle D\rangle\|_{\Sp^2}^2
  \le C\,\|U_0\|_{H^1\Sp^1}^2,
\end{align*}
using the Schatten embedding $\Sp^1\hookrightarrow\Sp^2$ in the last step.
\end{proof}


\begin{thebibliography}{99}

\bibitem{Alber1978}
I.~E.~Alber, \emph{The effects of randomness on the stability of two-dimensional surface wavetrains}, Proc.\ Roy.\ Soc.\ London Ser.~A \textbf{363} (1978), no.~1715, 525--546.

\bibitem{AndradeStiassnie2020}
D.~Andrade and M.~Stiassnie, \emph{New solutions of the CSY equation reveal increases in freak wave occurrence}, Wave Motion \textbf{97} (2020), 102581.

\bibitem{AAPS2020}
A.~Athanassoulis, G.~A.~Athanassoulis, M.~Ptashnyk, and T.~Sapsis, \emph{Strong solutions for the Alber equation and stability of unidirectional wave spectra}, Kinet.\ Relat.\ Models \textbf{13} (2020), no.~4, 703--737.

\bibitem{AthanassoulisGramstad2021}
A.~G.~Athanassoulis and O.~Gramstad, \emph{Modelling of ocean waves with the Alber equation: application to non-parametric spectra and generalisation to crossing seas}, Fluids \textbf{6} (2021), no.~8, 291.

\bibitem{AthanassoulisKarakatsaniKyza2025}
A.~Athanassoulis, F.~Karakatsani, and I.~Kyza, \emph{A structure-preserving, second-order-in-time scheme for the von Neumann equation with power nonlinearity}, arXiv:2506.06879, 2025.

\bibitem{AthanassoulisKyza2024}
A.~Athanassoulis and I.~Kyza, \emph{Modulation instability and convergence of the random-phase approximation for stochastic sea states}, Water Waves \textbf{6} (2024), no.~1, 145--167.


\bibitem{BoveDaPratoFano1976}
A.~Bove, G.~Da Prato, and G.~Fano, \emph{On the Hartree--Fock time-dependent problem}, Comm.\ Math.\ Phys.\ \textbf{49} (1976), 25--33.

\bibitem{Brezis2011}
H.~Brezis, \emph{Functional Analysis, Sobolev Spaces and Partial Differential Equations}, Universitext, Springer, New York, 2011.




\bibitem{Cazenave2003}
T.~Cazenave, \emph{Semilinear Schr\"odinger Equations}, Courant Lecture Notes in Mathematics, vol.~10, American Mathematical Society, Providence, RI, 2003.

\bibitem{ChenHongPavlovic2017}
T.~Chen, Y.~Hong, and N.~Pavlovi\'c, \emph{Global well-posedness of the NLS system for infinitely many fermions}, Arch.\ Ration.\ Mech.\ Anal.\ \textbf{224} (2017), no.~1, 91--123.

\bibitem{CrawfordSaffmanYuen1980}
D.~R.~Crawford, P.~G.~Saffman, and H.~C.~Yuen, \emph{Evolution of a random inhomogeneous field of nonlinear deep-water gravity waves}, Wave Motion \textbf{2} (1980), no.~1, 1--16.

\bibitem{Dysthe2008}
K.~B.~Dysthe, H.~E.~Krogstad, and P.~M\"uller, \emph{Oceanic rogue waves}, Annu.\ Rev.\ Fluid Mech.\ \textbf{40} (2008), 287--310.

\bibitem{MeiStiassnieYue2005b}
C.~C. Mei, M.~Stiassnie, and D.~K.-P. Yue, \emph{Theory and Applications of Ocean Surface Waves: Part 2: Nonlinear Aspects}, World Scientific, Singapore, 2005.



\bibitem{GrafakosOh2014}
L.~Grafakos and S.~Oh, \emph{The Kato--Ponce inequality}, Comm.\ Partial Differential Equations \textbf{39} (2014), no.~6, 1128--1157.

\bibitem{HadamaHong2025}
S.~Hadama and Y.~Hong, \emph{Global well-posedness of the nonlinear Hartree equation for infinitely many particles with singular interaction}, J.\ Funct.\ Anal.\ \textbf{289} (2025), no.~9, 111102.

\bibitem{HadamaRout2026}
S.~Hadama and A.~Rout, \emph{Applications of renormalisation to orthonormal Strichartz estimates and the NLS system on the circle}, arXiv:2604.00252 [math.AP], 2026.

\bibitem{HoffmannOstenhof1977}
M.~Hoffmann-Ostenhof and T.~Hoffmann-Ostenhof, \emph{``Schr\"odinger inequalities and asymptotic behaviour of the electron density of atoms and molecules''}, Phys.\ Rev.\ A \textbf{16} (1977), no.~5, 1782--1785.

\bibitem{MI}
T.~B.~Benjamin and J.~E.~Feir, \emph{The disintegration of wave trains on deep water. Part 1. Theory}, J.\ Fluid Mech.\ \textbf{27} (1967), no.~3, 417--430.




\bibitem{LewinSabin2015}
M.~Lewin and J.~Sabin, \emph{The Hartree equation for infinitely many particles. I.\ Well-posedness theory}, Comm.\ Math.\ Phys.\ \textbf{334} (2015), no.~1, 117--170.

\bibitem{LewinSabin2014}
M.~Lewin and J.~Sabin,
\emph{The Hartree equation for infinitely many particles.
II.\ Dispersion and scattering in 2D},
Anal.\ PDE \textbf{7} (2014), no.~6, 1339--1363.

\bibitem{Ochi}
M.~K.~Ochi, \emph{Ocean Waves: The Stochastic Approach}, Cambridge Ocean Technology Series, vol.~6, Cambridge University Press, 1998.

\bibitem{RibalBabaninYoungToffoliStiassnie2013}
A.~Ribal, A.~V.~Babanin, I.~Young, A.~Toffoli, and M.~Stiassnie, \emph{Recurrent solutions of the Alber equation initialized by Joint North Sea Wave Project spectra}, J.\ Fluid Mech.\ \textbf{719} (2013), 314--344.

\bibitem{Smith2024}
M.~Smith, \emph{Phase mixing for the Hartree equation and Landau damping in the semiclassical limit}, arXiv:2412.14842, 2024.


\bibitem{Simon2005}
B.~Simon, \emph{Trace Ideals and Their Applications}, 2nd ed., Math.\ Surveys Monogr., vol.~120, Amer.\ Math.\ Soc., Providence, RI, 2005.

\bibitem{StiassnieRegevAgnon2008}
M.~Stiassnie, A.~Regev, and Y.~Agnon, \emph{Recurrent solutions of Alber's equation for random water-wave fields}, J.\ Fluid Mech.\ \textbf{598} (2008), 245--266.

\bibitem{SulemSulem1999}
C.~Sulem and P.-L.~Sulem, \emph{The Nonlinear Schr\"odinger Equation: Self-Focusing and Wave Collapse}, Applied Mathematical Sciences, vol.~139, Springer, 1999.

\bibitem{vonNeumann1927}
J.~von Neumann, \emph{Wahrscheinlichkeitstheoretischer Aufbau der Quantenmechanik}, Nachr.\ Ges.\ Wiss.\ G\"ottingen Math.-Phys.\ Kl.\ (1927), 245--272.


\bibitem{Zagatti1992}
S.~Zagatti, \emph{The Cauchy problem for Hartree--Fock time-dependent equations}, Ann.\ Inst.\ H.\ Poincar\'e Phys.\ Th\'eor.\ \textbf{56} (1992), no.~4, 357--374.

\bibitem{ZakharovOstrovsky2009}
V.~E.~Zakharov and L.~A.~Ostrovsky, \emph{Modulation instability: the beginning}, Phys.\ D \textbf{238} (2009), no.~5, 540--548.
\end{thebibliography}
\end{document}